\newcommand{\la}{\lambda}
\newcommand{\al}{\alpha}
\newcommand{\be}{\beta}
\newcommand{\f}{\varphi}
\newcommand{\vv}{\overrightarrow}
\newcommand{\Ker}{\mathop\mathrm{Ker\,}}
\newcommand{\CC}{\mathbb{C}}
\newcommand{\RR}{\mathbb{R}}
\newcommand{\NN}{\mathbb{N}}
\numberwithin{equation}{section}
\newtheorem{te}{Theorem}[section]
\newtheorem{pr}{Proposition}[section]
\newtheorem{co}{Corollary}[section]
\newtheorem{lm}{Lemma}[section]
\theoremstyle{definition}
\newtheorem{de}{Definition}[section]
\newtheorem{re}{Remark}[section]
\newtheorem{ex}{Example}[section]
\newtheorem{exs}{Examples}[section]
\begin{document}

\title{K\"ahler and Sasakian-Einstein Quotients}
\thanks{The author aknowledges financial support from the Swiss National Science Foundation, from FCT/POCTI/FEDER, and from
the grant POCI/MAT/57888/2004.}

\author{Oana Mihaela Dr\u agulete}
\address{Instituto Superior T\'ecnico, Departamento de Mat\'ematica,
Av. Rovisco Pais 1049-001, Lisbon, Portugal}
\email{odragul@math.ist.utl.pt}

\subjclass{53D05, 53D10, 53D20, 53C25, 37J05, 37J15} 
\keywords{symplectic (K\"ahler) manifolds, contact (Sasakian) manifolds, symplectic (K\"ahler) ray reduction, K\"ahler and Sasakian-Einstein quotients, conformal Hamiltonian systems, momentum map.}
\date{16-03-08}

\begin{abstract}
We construct symplectic and K\"ahler ray reduced spaces and
discuss their relation with the Marsden-Weinstein (point)
reduction. This K\"ahler reduction is well defined even when the momentum value is not totally isotropic. 
The compatibility of the ray reduction with the cone
construction and the Boothby-Wang fibration is presented. Using
the compatibility with the cone construction we provide the exact
description of ray quotients of cotangent bundles. Some
applications of the ray reduction to the study of conformal
Hamiltonian systems are described. We also give necessary and
sufficient conditions for the (ray) quotients of K\"ahler
(Sasakian)-Einstein manifolds to be again K\"ahler
(Sasakian)-Einstein. 
\end{abstract}

\maketitle

\tableofcontents

\section{Introduction}

In this paper we study geometric properties of Sasakian and
K\"ahler quotients. For manifolds endowed with a Lie group $G$ of
symmetries, we construct a reduction procedure for symplectic and
K\"ahler manifolds using the ray pre-images of the associated
momentum map $J$. More precisely, instead of taking as in point
reduction (Weinstein-Marsden reduce spaces, usually denoted by
$M_\mu$), the pre-image of a momentum value $\mu$, we take the pre-image 
of $\RR^+\mu$, the positive ray of $\mu$. And instead of
taking the quotient with respect to the isotropy group $G_\mu$ of
the momentum with respect to the coadjoint action of $G$, we take
it with respect to the kernel group of $\mu$, a normal subgroup of
$G_\mu$. The ray reduced spaces will be denoted by
$M_{\RR^+\mu}$. We have three reasons to develop this
construction.

One is geometric: the construction of non-zero, well defined
K\"ahler reduced spaces. K\"ahler point reduction is not always
well defined. The problem is that the complex structure may not
leave invariant the horizontal distribution of the Riemannian
submersion $\pi_\mu:J^{-1}(\mu)\rightarrow
M_\mu:=J^{-1}(\RR^+\mu)/G_\mu$. The solution proposed in the
literature, is based on the Shifting Theorem (see Theorem $6.5.2$
in \cite{ortega--ratiu}). More precisely, one endows the coadjoint
orbit of $\mu$, $\mathcal{O}_\mu$ with a unique up to homotheties
K\"ahler-Einstein metric of positive Ricci curvature. This
uniqueness modulo homotheties is guaranteed by the choice of an
$Ad^*$-invariant scalar product on $\mathfrak{g}^*$. Then, one
performs the zero reduction of the K\"ahler difference of the base
manifold $M$ and $\mathcal{O}_\mu$. Unfortunately, this
construction is correct only in the case of totally isotropic
momentum (i.e. $G_\mu=G$). Otherwise, using the unique
K\"ahler-Einstein form on the coadjoint orbit, instead of the
Kostant-Kirillov-Souriau form makes impossible the use of the
Shifting Theorem since the momentum map of the orbit will no
longer be the inclusion. Even so, one could take by definition the
reduced space at $\mu$ momentum to be the zero reduced space of
the symplectic difference of $M$ and $\mathcal{O}_\mu$.  But this
reduced space is not canonical, in the sense that the pull-back
through the quotient projection of the reduced K\"ahler structure
is no longer the initial one. On the other hand, the ray K\"ahler
reduction always exists and is canonical.

The second reason is that it provides invariant submanifolds for
conformal Hamiltonian systems (see \cite{mclachlan--perlmutter})
and consequently, the right framework for the reduction of
symmetries of such systems. They are usually non-autonomous
mechanical systems with friction whose integral curves preserve,
in the case of symmetries, the ray pre-images of the momentum map,
and not the point pre-images.

The third reason is finding necessary and sufficient conditions
for quotients of K\"ahler (Sasakian)-Einstein manifolds to be
again K\"ahler (Sasakian)-Einstein. Using techniques of A. Futaki
(see \cite{futaki--unu}, \cite{futaki--doi}), we prove that, under
appropriate hypothesis, ray quotients of K\"ahler-Einstein
manifolds remain K\"ahler-Einstein. We can thus construct new
examples of K\"ahler (Sasakian)-Einstein metrics.

As examples of symplectic (K\"ahler) and contact (Sasakian) ray
reductions we treat the case of cotangent and cosphere bundles. We
show, proving a shifting type theorem that, theoretically,
$(T^*Q)_{\RR^+\mu}$ and $(S^*Q)_{\RR^+\mu}$ are universal ray
reduced spaces. Concrete examples of toric actions on spheres are
also computed.

The paper is structured as follows. Section $2$ presents the
symplectic and K\"ahler ray reduction treating separately the case
of exact symplectic manifolds. In the fourth section of this paper
we deal with the cone and Boothby-Wang compatibilities with the
ray reduction. We show that the ray
reduction of the cone of a contact manifold is exactly the cone of
the contact reduced space. As a corollary we obtain the ray
reduction of cotangent bundles. Also, we prove that the
Boothby-Wang fibration associated to a quasi-regular, compact,
Sasakian manifold descends to a Boothby-Wang fibration of the ray
reduced spaces. Section $4$ presents the study of conformal
Hamiltonian systems. We extend the class of conformal Hamiltonian
systems already studied in the literature and we complete the
existing Lie Poisson reduction with the general ray one, making
thus use of the conservative properties of the momentum map. We illustrate all
these with the example of a certain type of Rayleigh systems.
We also give a characterization of relative equilibria for this
type of systems. In Section $5$ we perform the ray reduction of
cotangent bundles of Lie groups, as well as the reduction of their
associated cosphere bundles. We show, proving a shifting type
theorem that, theoretically, $(T^*G)_{\RR^+\mu}$ and
$(S^*G)_{\RR^+\mu}$ are universal ray reduced spaces. The role of
the coadjoint orbit of the ray momentum $\RR^+\mu$ in the
construction of these universal reduced spaces is made clear. In
the last section we find necessary and sufficient conditions for
the ray reduced space of a K\"ahler-Einstein manifold of positive
Ricci curvature to be again K\"ahler-Einstein. Using the
compatibility of ray reduction with the Boothby-Wang fibration, we
obtain as a corollary similar conditions for the Sasakian-Einstein
case. All these are illustrated with concrete examples in which we construct
new K\"ahler (Sasakian)-Einstein manifolds.

\section{Symplectic and K\"ahler Ray-Reductions}
\label{sectiunea--trei}

Let $G$ be a Lie group acting smoothly, properly, by
symplectomorphisms and in a Hamiltonian way on a symplectic manifold
$(M,\omega)$. Denote by $J:M\rightarrow\mathfrak{g}^*$ the
associated momentum map and recall that it is $G$-equivariant. For
any element $\mu\in\mathfrak{g}^*$, let $K_\mu$ be the unique
connected, normal Lie subgroup of $G_\mu$ with Lie algebra given by
$\mathfrak{k}_\mu = \ker\: (\mu| _{\mathfrak{g}_\mu})$. This group
is called the \textit{kernel group of $\mu$}.
\begin{de}
We define the quotient of $M$ by $G$ at $\RR^+\mu$ to be
$M_{\RR^+\mu}:=J^{-1}(\RR^+\mu)/K_\mu$. $M_{\RR^+\mu}$ will be
called the \textit{ray reduced space at} $\mu$.
\end{de}

In a paper of Guillemin and Sternebrg (\cite{guillemin--sternberg}, Example $4$) we found a 
geometric interpretation for the kernel algebra of $\mu$.
Let $\mathcal{O}_{\RR^+\mu}$ be the \textit{cone of the coadjoint orbit through $\mu$} defined by
\begin{equation}
\label{conul--orbitei--coadj}
\mathcal{O}_{\RR^+\mu}:=\{Ad^*_{g^{-1}}r\mu\,|\,g\in G\,\text{,}\,
r\in\RR^+\}.
\end{equation}
The conormal space at $\mu$ of $\mathcal{O}_{\RR^+\mu}$ is precisely $\mathfrak{k}_\mu$. This can be easily deduced
using the characterization of the tangent space at $\mu$ of $\mathcal{O}_{\RR^+\mu}$ given in Proposition \ref{sptanglaorbita}.

In this section we will show that, under certain hypothesis, the
ray quotient admits a natural symplectic or K\"ahler structure,
once the initial manifold is symplectic or K\"ahler. The proof of
the next theorem is an analogous of the proof given in
\cite{willett} for the contact case (see Theorem $1$). As all
reduction theorems, it mainly uses arguments in linear symplectic
or contact algebra.

For the two results of this section we will need three lemmas. The
first is a characterization of a locally free action and the last
two are classical results of symplectic linear algebra.
\begin{lm}
\label{lema1} $J$ is transverse to $\RR^+\mu$ if and only if $K_\mu$
acts locally freely on $J^{-1}(\RR^+\mu)$.
\end{lm}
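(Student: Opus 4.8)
The plan is to convert each side of the equivalence into a pointwise linear-algebra condition on the isotropy subalgebra $\mathfrak{g}_p=\{\xi\in\mathfrak{g}:\xi_M(p)=0\}$, where $\xi_M$ denotes the infinitesimal generator of the $G$-action, and then to check that these two conditions coincide at every $p\in J^{-1}(\RR^+\mu)$.

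First I would invoke the standard description of the range of the momentum map's differential: for every $p\in M$ one has $\operatorname{im}T_pJ=\mathfrak{g}_p^{\circ}$, the annihilator in $\mathfrak{g}^*$ of $\mathfrak{g}_p$ (the bifurcation/reduction lemma, see \cite{ortega--ratiu}). Fixing $p\in J^{-1}(\RR^+\mu)$ with $J(p)=r\mu$, $r>0$, and noting that $\RR^+\mu$ is a one-dimensional submanifold with $T_{r\mu}(\RR^+\mu)=\RR\mu$, transversality of $J$ to $\RR^+\mu$ at $p$ means $\mathfrak{g}_p^{\circ}+\RR\mu=\mathfrak{g}^*$. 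Taking annihilators in $\mathfrak{g}$ and using $(\mathfrak{g}_p^{\circ})^{\circ}=\mathfrak{g}_p$, $(\RR\mu)^{\circ}=\ker\mu$, and $(A+B)^{\circ}=A^{\circ}\cap B^{\circ}$, this is equivalent to $\mathfrak{g}_p\cap\ker\mu=\{0\}$.

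On the other side, $K_\mu$ acts locally freely at $p$ exactly when its isotropy algebra there vanishes, i.e. $\mathfrak{k}_\mu\cap\mathfrak{g}_p=\{0\}$. Since $\mathfrak{k}_\mu=\ker(\mu|_{\mathfrak{g}_\mu})=\mathfrak{g}_\mu\cap\ker\mu$, matching the two conditions reduces to showing $\mathfrak{g}_p\cap\ker\mu=\mathfrak{g}_p\cap\mathfrak{k}_\mu$ for $p$ on the ray. This is where the positive ray is essential, and the key step is the inclusion $\mathfrak{g}_p\subseteq\mathfrak{g}_\mu$: differentiating the equivariance relation $J(g\cdot p)=Ad^*_{g^{-1}}J(p)$ along $g=\exp(t\xi)$ with $\xi\in\mathfrak{g}_p$ gives $ad^*_\xi J(p)=0$, so $\mathfrak{g}_p\subseteq\mathfrak{g}_{J(p)}$; and since $J(p)=r\mu$ with $r>0$, positive homothety leaves the coadjoint isotropy unchanged, so $\mathfrak{g}_{J(p)}=\mathfrak{g}_{r\mu}=\mathfrak{g}_\mu$. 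Granting $\mathfrak{g}_p\subseteq\mathfrak{g}_\mu$, intersecting with $\ker\mu$ yields $\mathfrak{g}_p\cap\ker\mu=\mathfrak{g}_p\cap(\mathfrak{g}_\mu\cap\ker\mu)=\mathfrak{g}_p\cap\mathfrak{k}_\mu$, so the two pointwise conditions are literally identical. As $p\in J^{-1}(\RR^+\mu)$ was arbitrary, transversality of $J$ to $\RR^+\mu$ is equivalent to local freeness of $K_\mu$ on $J^{-1}(\RR^+\mu)$.

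The only delicate points I anticipate are essentially bookkeeping: keeping the annihilator correspondences straight, and the observation --- which is exactly what singles out the positive ray as the natural object --- that positive rescaling of $\mu$ preserves the coadjoint isotropy algebra, hence also $\mathfrak{k}_\mu$. No analytic input beyond the smooth, proper, Hamiltonian hypotheses of the standing setup is required.
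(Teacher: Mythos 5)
Your proof is correct. Note that the paper itself never proves Lemma \ref{lema1}: it is stated as one of three auxiliary lemmas without proof, with only the remark that the main reduction theorem is proved analogously to the contact case in \cite{willett}, so there is no in-paper argument to compare against --- your write-up supplies exactly the missing details. The route you take is the natural one: the bifurcation lemma $\operatorname{im}T_pJ=\mathfrak{g}_p^{\circ}$ turns transversality at $p\in J^{-1}(\RR^+\mu)$ into $\mathfrak{g}_p^{\circ}+\RR\mu=\mathfrak{g}^*$, annihilator duality in finite dimensions converts this to $\mathfrak{g}_p\cap\ker\mu=\{0\}$, while local freeness of $K_\mu$ at $p$ is $\mathfrak{g}_p\cap\mathfrak{k}_\mu=\{0\}$; the two conditions are then identified through the inclusion $\mathfrak{g}_p\subseteq\mathfrak{g}_{J(p)}=\mathfrak{g}_\mu$, which follows from equivariance of $J$ together with the fact that rescaling by $r>0$ does not change the coadjoint isotropy. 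All the individual steps check out ($\mathfrak{k}_\mu=\mathfrak{g}_\mu\cap\ker\mu$, discreteness of $(K_\mu)_p$ equivalent to vanishing of its Lie algebra $\mathfrak{k}_\mu\cap\mathfrak{g}_p$, and the pointwise quantification over $J^{-1}(\RR^+\mu)$ matching on both sides), and the argument even degenerates correctly when $\mu=0$, where it recovers the classical statement that $J$ is submersive along $J^{-1}(0)$ iff the action there is locally free. One cosmetic remark: positivity of $r$ is not what makes $\mathfrak{g}_{r\mu}=\mathfrak{g}_\mu$ (any $r\neq 0$ does); the positive ray matters for the global geometry of $\RR^+\mu$ and $G_{\RR^+\mu}$ elsewhere in the paper, not for this lemma.
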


\begin{lm}
\label{lema2}
 Consider a symplectic vector space $(V,\Omega)$
and $W\subset V$ an isotropic subspace. Then,
$\operatorname{ker}\Omega\mid_{W^\Omega}=W$, where $W^\Omega$ is the
symplectic perpendicular of $W$.
\end{lm}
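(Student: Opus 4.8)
The plan is to reduce the statement to the single elementary fact that the radical (degeneracy space) of the restriction of $\Omega$ to an arbitrary subspace $U\subseteq V$ is exactly $U\cap U^\Omega$. Indeed, by definition a vector $v\in U$ belongs to $\ker\Omega\mid_U$ precisely when $\Omega(v,u)=0$ for every $u\in U$, which is to say $v\in U^\Omega$; combined with $v\in U$ this gives $\ker\Omega\mid_U=U\cap U^\Omega$. Note that this identity needs no hypothesis whatsoever on $U$.

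Next I would apply this observation with $U:=W^\Omega$, obtaining
\[
\ker\Omega\mid_{W^\Omega}=W^\Omega\cap\bigl(W^\Omega\bigr)^\Omega .
\]
To simplify the right-hand side I would invoke the involutivity of the symplectic-perpendicular operation, namely $\bigl(W^\Omega\bigr)^\Omega=W$. This holds because $\Omega$ is nondegenerate and $V$ is finite dimensional, so that $\dim U+\dim U^\Omega=\dim V$ for every subspace $U$; applying this twice yields $\dim\bigl(W^\Omega\bigr)^\Omega=\dim W$, which together with the obvious inclusion $W\subseteq\bigl(W^\Omega\bigr)^\Omega$ forces equality. Hence $\ker\Omega\mid_{W^\Omega}=W^\Omega\cap W$.

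Finally I would bring in the isotropy hypothesis. By definition $W$ isotropic means that $\Omega$ vanishes on $W$, equivalently $W\subseteq W^\Omega$; therefore $W^\Omega\cap W=W$, and the desired equality $\ker\Omega\mid_{W^\Omega}=W$ follows at once. It is worth remarking that the intermediate identity $\ker\Omega\mid_{W^\Omega}=W\cap W^\Omega$ is valid for any subspace $W$, and that isotropy enters only to collapse $W\cap W^\Omega$ down to $W$ in this last line.

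I do not anticipate any genuine obstacle here, since the statement is pure finite-dimensional symplectic linear algebra. The only two points requiring care are the standard facts invoked above, namely that the kernel of a bilinear form restricted to $U$ is $U\cap U^\Omega$, and that the perpendicular is an involution for a nondegenerate form in finite dimensions; both reduce to the dimension count $\dim U+\dim U^\Omega=\dim V$, and the nondegeneracy of $\Omega$ is used precisely to guarantee that formula. If one wished to avoid dimension arguments entirely, the involutivity $\bigl(W^\Omega\bigr)^\Omega=W$ could instead be quoted as a known result, after which the proof is a two-line manipulation.
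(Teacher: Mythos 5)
Your proof is correct and complete. The paper itself offers no proof of this lemma: it is stated together with Lemmas \ref{lema1} and \ref{lema3} and dismissed as one of the ``classical results of symplectic linear algebra,'' so there is nothing in the text to compare against step by step. Your argument is the standard one and fills that gap cleanly: the identity $\ker\Omega\mid_U = U\cap U^\Omega$ for an arbitrary subspace $U$, the involutivity $(W^\Omega)^\Omega = W$ via the dimension formula $\dim U + \dim U^\Omega = \dim V$ (which is where nondegeneracy of $\Omega$ and finite-dimensionality enter, both legitimate here since $V$ is a tangent space $T_xM$ in the paper's application), and finally the isotropy hypothesis $W\subseteq W^\Omega$ to collapse the intersection. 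Each step is justified, and your closing remark correctly isolates where each hypothesis is used.
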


\begin{lm}
\label{lema3} Let $V$ be a vector space and $\Omega:V\times
V\rightarrow\RR$ an antisymmetric and bilinear two-form. If $V$
admits the direct decomposition $V=X\oplus V$ with respect to
$\Omega$ and
$\operatorname{ker}\Omega\subseteq\operatorname{ker}\Omega\mid_X$,
then $\operatorname{ker}\Omega=\operatorname{ker}\Omega\mid_X$.
\end{lm}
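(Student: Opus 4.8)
The plan is to read the hypothesis ``$V=X\oplus Y$ with respect to $\Omega$'' as an $\Omega$-orthogonal direct sum, i.e.\ $\Omega(x,y)=0$ for every $x\in X$ and every $y\in Y$ (here $Y$ denotes the second summand; the ``$V$'' displayed in the statement is a typographical slip). Under this reading the conclusion $\ker\Omega=\ker\Omega|_X$ splits into two inclusions, and one of them is precisely the standing hypothesis $\ker\Omega\subseteq\ker\Omega|_X$. So the only thing left to establish is the reverse inclusion $\ker\Omega|_X\subseteq\ker\Omega$, and I expect this to follow from $\Omega$-orthogonality alone, without using the hypothesis.

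To prove $\ker\Omega|_X\subseteq\ker\Omega$ I would take an arbitrary $x\in\ker\Omega|_X$, so that $x\in X$ and $\Omega(x,x')=0$ for all $x'\in X$. Given any $v\in V$, I write $v=x'+y$ with $x'\in X$ and $y\in Y$ using the direct sum. Bilinearity then gives $\Omega(x,v)=\Omega(x,x')+\Omega(x,y)$, where the first term vanishes because $x\in\ker\Omega|_X$ and the second vanishes by $\Omega$-orthogonality of the decomposition. Hence $\Omega(x,v)=0$ for every $v\in V$, that is, $x\in\ker\Omega$.

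Combining this inclusion with the hypothesis $\ker\Omega\subseteq\ker\Omega|_X$ yields the desired equality. I do not anticipate any genuine obstacle: the statement is a routine fact of bilinear algebra. The only points requiring care are (a) interpreting the decomposition as $\Omega$-orthogonal, so that $\Omega(X,Y)=0$ may be used, and (b) noticing that the hypothesis $\ker\Omega\subseteq\ker\Omega|_X$ already presupposes $\ker\Omega\subseteq X$ — it is this inclusion, rather than orthogonality, that rules out a radical vector with a nontrivial $Y$-component. Indeed, orthogonality alone gives the finer identity $\ker\Omega=(\ker\Omega|_X)\oplus(\ker\Omega|_Y)$, and the hypothesis forces the $Y$-summand to vanish; but the two-inclusion argument above is the shortest route to the stated conclusion.
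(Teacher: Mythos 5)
Your proof is correct, but there is nothing in the paper to compare it against: Lemma \ref{lema3} is one of the two lemmas the author explicitly labels ``classical results of symplectic linear algebra'' and states without proof, so your argument supplies what the paper omits. Your two interpretive choices are also the right ones. Reading the misprinted ``$V=X\oplus V$'' as an $\Omega$-orthogonal direct sum $V=X\oplus W$ matches exactly how the lemma is invoked in the proof of Theorem \ref{reducere--unu}: there the decomposition is $T_xJ^{-1}(\RR^+\mu)\oplus\mathfrak{m}_M(x)$ (of $T_xU$, even though the text writes $V:=T_xM$), the two summands having just been shown $\omega_x$-perpendicular, and the inclusion hypothesis is supplied by \eqref{eq3}. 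The mathematics itself is routine and sound: for $x\in\ker\Omega|_X$ and $v=x'+w$ with $x'\in X$, $w\in W$, bilinearity and orthogonality give $\Omega(x,v)=\Omega(x,x')+\Omega(x,w)=0$, so $\ker\Omega|_X\subseteq\ker\Omega$ holds unconditionally, and the assumed reverse inclusion closes the equality. Your final observation is likewise accurate and worth keeping: orthogonality alone yields $\ker\Omega=(\ker\Omega|_X)\oplus(\ker\Omega|_W)$, and the hypothesis $\ker\Omega\subseteq\ker\Omega|_X$ (which forces $\ker\Omega\subseteq X$) is precisely what kills the $W$-summand, showing that the hypothesis cannot be dropped.
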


We are now ready to prove the first theorem of this section.

\begin{te}
\label{reducere--unu}
 Suppose $(M, \omega)$ is a symplectic manifold endowed with a
 Hamiltonian action of the Lie group $G$. Let $\mu\in\mathfrak{g}^*$
 and $K_\mu$ its
kernel group. Denote by $J:M\rightarrow\mathfrak{g}^*$ the
associated momentum map and assume that the following hypothesis are
verified:

$1^\circ$ $K_\mu$ acts properly on $J^{-1}(\RR^+\mu)$;

$1^\circ$ $J$ is transverse to $\RR^+\mu$;

$3^\circ$ $\mathfrak{g}=\operatorname{ker}\mu+\mathfrak{g}_\mu$.

Then the ray quotient  at $\mu$
$$
M_{\RR^+\mu}:=J^{-1}(\RR^+\mu)/K_\mu
$$
is a naturally symplectic orbifold, i.e. its symplectic structure
$\omega_{\RR^+\mu}$ is given by
\begin{equation*}
\label{doamne--ajutor}
\pi^*_{\RR^+\mu}\omega_{\RR^+\mu}=i^*_{\RR^+\mu}\omega,
\end{equation*} where
$$\pi_{\RR^+\mu}:J^{-1}(\RR^+\mu)\rightarrow M_{\RR^+\mu}\qquad\text{and}\qquad
 i_{\RR^+\mu}:J^{-1}(\RR^+\mu)\hookrightarrow M$$ are the canonical projection
and immersion respectively.
\end{te}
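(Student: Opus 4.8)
The plan is to reduce the entire statement to a pointwise computation in symplectic linear algebra on the tangent spaces of $M$, following the scheme of Willett's contact reduction. Throughout I write $\xi_M$ for the infinitesimal generator of $\xi\in\mathfrak{g}$ and $\mathfrak{h}\cdot m=\{\xi_M(m)\mid\xi\in\mathfrak{h}\}$ for a subspace $\mathfrak{h}\subseteq\mathfrak{g}$. First I would record the manifold-theoretic preliminaries. By the transversality hypothesis and Lemma \ref{lema1}, $J^{-1}(\RR^+\mu)$ is an embedded submanifold of $M$ on which $K_\mu$ acts locally freely; moreover $K_\mu$ preserves it, since for $g\in K_\mu\subseteq G_\mu$ and $J(m)=r\mu$ one has $J(g\cdot m)=Ad^*_{g^{-1}}(r\mu)=r\mu$. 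Together with the properness hypothesis, the slice theorem then makes $M_{\RR^+\mu}=J^{-1}(\RR^+\mu)/K_\mu$ a smooth orbifold with $\pi_{\RR^+\mu}$ an orbifold submersion.

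Next I would produce the reduced two-form. As $\omega$ is $G$-invariant, $i^*_{\RR^+\mu}\omega$ is $K_\mu$-invariant, so to descend it through $\pi_{\RR^+\mu}$ it is enough to verify horizontality, and nondegeneracy of the descended form is exactly the reverse inclusion; thus the whole problem concentrates in the pointwise identity $\ker(\omega_m|_W)=\mathfrak{k}_\mu\cdot m$, where $W:=T_m(J^{-1}(\RR^+\mu))$. Using the defining relation $\omega_m(\xi_M(m),v)=\langle T_mJ(v),\xi\rangle$ and the fact that the tangent line to $\RR^+\mu$ at $J(m)$ is $\RR\mu=(\ker\mu)^\circ$, I get $W=(T_mJ)^{-1}(\RR\mu)=(\ker\mu\cdot m)^{\omega_m}$, hence $W^{\omega_m}=\ker\mu\cdot m$ and
\[
\ker(\omega_m|_W)=W\cap W^{\omega_m}=(\ker\mu\cdot m)^{\omega_m}\cap(\ker\mu\cdot m).
\]
The inclusion $\supseteq$ is immediate: for $\xi\in\mathfrak{k}_\mu\subseteq\mathfrak{g}_\mu$ we have $T_mJ(\xi_M(m))=-r\,ad^*_\xi\mu=0$, so $\mathfrak{g}_\mu\cdot m$, and a fortiori $\mathfrak{k}_\mu\cdot m$, is isotropic and contained in $W\cap W^{\omega_m}$.

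The reverse inclusion is the step I expect to be the main obstacle, and it is precisely here that hypothesis $3^\circ$ is indispensable. A vector in $W\cap W^{\omega_m}$ has the form $v=\xi_M(m)$ with $\xi\in\ker\mu$ and $ad^*_\xi\mu\in\RR\mu$, say $ad^*_\xi\mu=c\mu$; I must show $v$ already lies in $\mathfrak{k}_\mu\cdot m$, i.e. that $\xi$ may be chosen in $\mathfrak{g}_\mu$. Now $\mathfrak{g}=\ker\mu+\mathfrak{g}_\mu$ is equivalent (since $\ker\mu$ is a hyperplane) to the existence of $\eta\in\mathfrak{g}_\mu$ with $\langle\mu,\eta\rangle\neq0$; pairing $c\mu=ad^*_\xi\mu$ with such an $\eta$ and using $\langle ad^*_\xi\mu,\eta\rangle=\langle\mu,[\xi,\eta]\rangle=-\langle ad^*_\eta\mu,\xi\rangle=0$ forces $c\langle\mu,\eta\rangle=0$, whence $c=0$ and $\xi\in\ker\mu\cap\mathfrak{g}_\mu=\mathfrak{k}_\mu$. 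This is exactly the mechanism that kills the spurious coadjoint line-stabilizer direction which, in the ray setting (unlike point reduction), would otherwise enlarge the kernel and break nondegeneracy. I would package this cleanly through the two symplectic lemmas: Lemma \ref{lema2} applied to the isotropic subspace $\mathfrak{k}_\mu\cdot m$ computes the kernel of $\omega_m$ on its symplectic perp $P:=(\mathfrak{k}_\mu\cdot m)^{\omega_m}\supseteq W$, and Lemma \ref{lema3}, applied to the $\omega_m$-orthogonal splitting of $P$ singled out by the element $\eta$ above (together with the horizontality already established), transfers the identity from $P$ down to $W$, yielding $\ker(\omega_m|_W)=\mathfrak{k}_\mu\cdot m$.

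Finally I would assemble the conclusion. The pointwise identity simultaneously furnishes horizontality and nondegeneracy, so there is a unique nondegenerate two-form $\omega_{\RR^+\mu}$ on $M_{\RR^+\mu}$ with $\pi^*_{\RR^+\mu}\omega_{\RR^+\mu}=i^*_{\RR^+\mu}\omega$; and since $\pi^*_{\RR^+\mu}(d\omega_{\RR^+\mu})=i^*_{\RR^+\mu}(d\omega)=0$ while $\pi_{\RR^+\mu}$ is a surjective submersion, $d\omega_{\RR^+\mu}=0$. Hence $(M_{\RR^+\mu},\omega_{\RR^+\mu})$ is a symplectic orbifold, as claimed.
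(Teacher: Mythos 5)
Your proof is correct, but it takes a genuinely different route from the paper's. The paper's nondegeneracy argument is indirect: it introduces the auxiliary zero level $U:=\Psi^{-1}(0)=J^{-1}(\mathfrak{k}_\mu^\circ)$ of the $K_\mu$-momentum map, proves $(T_x(K_\mu\cdot x))^{\omega_x}=T_xU$, applies Lemma \ref{lema2} there to get $\operatorname{ker}\omega_x|_{T_xU}=T_x(K_\mu\cdot x)$, and then descends to $T_xJ^{-1}(\RR^+\mu)$ via Lemma \ref{lema3}, using the splitting $T_xU=\mathfrak{m}_M(x)\oplus T_xJ^{-1}(\RR^+\mu)$ built from a complement $\mathfrak{m}$ of $\mathfrak{g}_\mu$ with $\mu|_{\mathfrak{m}}=0$; hypothesis $3^\circ$ enters there in the form $\RR\mu\cap T_{t\mu}(G\cdot t\mu)=\{0\}$, which yields $\mathfrak{m}_M(x)\cap T_xJ^{-1}(\RR^+\mu)=\{0\}$, and a dimension count completes the argument. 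You instead work entirely inside $T_mM$: transversality identifies $W=(T_mJ)^{-1}(\RR\mu)=(\operatorname{ker}\mu\cdot m)^{\omega_m}$, biorthogonality gives $W^{\omega_m}=\operatorname{ker}\mu\cdot m$, and hypothesis $3^\circ$, in the equivalent form ``there exists $\eta\in\mathfrak{g}_\mu$ with $\mu(\eta)\neq0$'', kills the coefficient $c$ in $ad^*_\xi\mu=c\mu$ by pairing against $\eta$. Your route is shorter and sharper: it never needs $U$ to be a submanifold (a point the paper only asserts in passing), needs no dimension count, and pinpoints exactly how $3^\circ$ excludes the spurious ray-stabilizer directions; indeed your closing appeal to Lemmas \ref{lema2}--\ref{lema3} is superfluous, since the pairing argument already proves the pointwise identity. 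What the paper's heavier scaffolding buys is reuse: the splitting constructed in its proof is recorded as \eqref{eq4} and invoked again in the proof of the K\"ahler Theorem \ref{reducere--doi}, where the frames coming from $\mathfrak{k}_\mu$ and $\mathfrak{m}$ are essential, and your leaner argument does not directly supply them. Two minor blemishes in your write-up, neither fatal: the parenthetical claim that all of $\mathfrak{g}_\mu\cdot m$ lies in $W\cap W^{\omega_m}$ is unjustified (when $T_mJ(W)=\RR\mu$, any $\eta\in\mathfrak{g}_\mu$ with $\mu(\eta)\neq0$ violates it) --- only the inclusion for $\mathfrak{k}_\mu\cdot m$ is needed, and that one is right; and ``since $\operatorname{ker}\mu$ is a hyperplane'' tacitly assumes $\mu\neq0$, so the case $\mu=0$, where the theorem reduces to classical Marsden--Weinstein reduction at zero and your identity holds trivially, should be split off explicitly.
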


\begin{proof}
The transversality of the momentum map with respect to $\RR^+\mu$,
ensures that $J^{-1}(\RR^+\mu)$ is a submanifold of $M$. Lemma
\ref{lema1} implies that the quotient $M_{\RR^+\mu}$ is an
orbifold and that $\pi_{\RR^+\mu}$ is a surjective submersion in
the category of orbifolds.

The first step is to see that the restriction of the symplectic form
on $J^{-1}(\RR^+\mu)$ is projectable on the quotient $M_{\RR^+\mu}$. 
For any $\xi\in
\mathfrak{k}_\mu$ and any $x$ in $M$, we have that
$$
T_x\pi_{\RR^+\mu}(\xi_M(x))=\left.\frac{d}{dt}\right|_{t=0}\pi_{\RR^+\mu}(\operatorname{exp}t\xi\cdot
x)=\left.\frac{d}{dt}\right|_{t=0}\pi_{\RR^+\mu} (x)=0.
$$
Hence, $\langle\{\xi_{J^{-1}(\RR^+\mu)}\,|\,
\xi\in\mathfrak{k}_\mu \}\rangle
\subset\operatorname{ker}(T\pi_{\RR^+\mu})$. A count of dimensions
shows that, in fact, the vertical distribution of $\pi_{\RR^+\mu}$
is generated by all the infinitesimal isometries associated to the
elements of $\mathfrak{k}_\mu$. Since
$\omega\mid_{J^{-1}(\RR^+\mu)}=i_{\RR^+\mu}^*\omega$ is
$K_\mu$-invariant, it follows that its Lie derivative with respect
to all vector fields $\{\xi_{J^{-1}(\RR^+\mu)}\,|\,
\xi\in\mathfrak{k}_\mu \}$ is zero. Let $x\in J^{-1}(\RR^+\mu)$
with $J(x)=r\mu$ and $v\in T_x(J^{-1}(\RR^+\mu))$. Then,
identifying $T_{J(x)}\RR^{+}\mu$ with $\RR\mu$, we obtain
\begin{equation*}
\begin{split}
\omega(i_{\RR^+\mu}(x))(\xi_M(x), T_x i_{\RR^+\mu}
v)=T_{i_{\RR^+\mu}(x)}J\mid_{J^{-1}(\RR^+\mu)}(v)(\xi)=\\
i_{\RR^+\mu}^*(TJ\mid_{J^{-1}(\RR^+\mu)})(v)(\xi)=\mu(\xi)=0.
\end{split}
\end{equation*}
It follows that $i_{\RR^+\mu}^* \omega$ is a basic two-form which
projects on $M_{\RR^+\mu}$ to the closed form
$\omega_{\RR^+\mu}\in \Lambda^2 (T^* M_\mu)$ with the property
that $\pi^*_{\RR^+\mu}\omega_{\RR^+\mu}=i^*_{\RR^+\mu}\omega$.

Since $\omega_{\RR^+\mu}$ is a closed form, it remains to prove
that it is also non-degenerate. For this, we will show that
$T_x(K_\mu\cdot x)=\ker (i^*_{\RR^+\mu}\omega)(x)$, for any $x\in
J^{-1}(\RR^+\mu)$. Fix $x\in J^{-1}(\RR^+\mu)$ with $J(x)=t\mu$
and denote by $\Psi:M\rightarrow \mathfrak{k}_\mu^*$ the momentum
map associated to the action of the kernel group of $\mu$ on $M$.
Let $i^T:\mathfrak{g}^*\hookrightarrow\mathfrak{k}^*_\mu$ be the
canonical inclusion. Then, $\Psi=i^T\circ J$ and
$J^{-1}(\RR^+\mu)\subset
J^{-1}(\mathfrak{k}^\circ_\mu)=\Psi^{-1}(0)$. Notice that
$J^{-1}(\RR^+\mu)\cap G\cdot x=G_{\RR^+\mu}\cdot x$, where
$G_{\RR^+\mu}=\{g\in G\mid \operatorname{Ad}^*_g \mu=r\mu, r>0\}$
is the ray isotropy group of $\mu$. This Lie group has many interesting
properties for which we refer the reader to Section
\ref{Sectiunea--sase}. 

For any $v\in (T_x (K_\mu \cdot
x))^{\omega_x}$, $\omega_x (v,\xi_M (x))=0$,
$\forall\xi\in\mathfrak{k}_\mu$ if and only if $T_x J(v)(\xi)=0$,
$\forall\xi\in\mathfrak{k}_\mu$. Therefore, $(T_x (K_\mu\cdot
x))^{\omega_x}=T_x U$, where
$U:=J^{-1}(\mathfrak{k}^{\circ}_\mu)=\Psi^{-1}(0)$. We can assume
$U$ to be a submanifold of $M$ because the transversality
condition satisfied by the momentum map implies that $K_\mu$ acts
locally freely at least on a neighborhood of $J^{-1}(\RR^+\mu)$ in
$U$, if not on the whole $U$.

Applying Lemma \ref{lema2} for $(V,\Omega):=(T_x M,\omega_x )$ and
$W:=T_x (K_\mu\cdot x)$, we obtain that
$\operatorname{ker}\omega_x \mid_{T_x U}=T_x (K_\mu\cdot x)$. We
have already seen that $T_x (K_\mu\cdot
x)\subset\operatorname{ker}i^*_{\RR^+\mu}\omega_x$. It follows
that
\begin{equation}
\label{eq3} \operatorname{ker}\omega_x \mid_{T_x
U}\subset\operatorname{ker}\omega_x\mid_{T_x J^{-1}(\RR^+\mu)}.
\end{equation}

Since $\mathfrak{g}=\operatorname{ker}\mu+\mathfrak{g}_\mu$, we can
chose a decomposition
\begin{equation}
\label{descompunere--m}
\mathfrak{g}=\mathfrak{g}_\mu\oplus\mathfrak{m}\quad\text{with}\quad
\mu\mid_\mathfrak{m}=0.
\end{equation}
 Let $\mathfrak{m}_M:=\{\xi_M(x)\mid\xi\in
M\}$. For any $\xi\in\mathfrak{m}$ and $\eta\in\mathfrak{k}_\mu$,
the equivariance of the momentum map implies that
$$
T_xJ(\xi_M(x))(\eta)=\xi_{\mathfrak{g}^*}(t\mu)(\eta)=
-t\langle\mu,[\xi,\eta]\rangle=t\eta_{\mathfrak{g}^*}(\mu)(\xi)=0.
$$
Therefore, $\mathfrak{m}_M(x)\subset T_xU$ and
$T_xJ(\mathfrak{m}_M(x))\subset T_{t\mu}(G\cdot t\mu)$. It is easy
to see that
$$
T_xJ\mid_{\mathfrak{m}_M(x)}:\mathfrak{m}_M(x)\rightarrow
T_{t\mu}(G\cdot t\mu)
$$
is a linear isomorphism and, hence,
\begin{equation}
\label{eq1}
 T_xJ(\mathfrak{m}_M(x))=T_{t\mu}(G\cdot t\mu).
\end{equation}
Notice that equation \eqref{eq1}, the third hypothesis of the theorem which can
equivalently be expressed as
$\{0\}=(\operatorname{ker}\mu)^\circ\cap(\mathfrak{g}_\mu)^\circ=\RR\mu\cap
T_{t\mu}(G\cdot t\mu)$, and the fact that
$T_xJ(J^{-1}(\RR^+\mu))\subset\RR\mu$ imply that
\begin{equation}
\label{eq2} \mathfrak{m}_M(x)\cap T_xJ^{-1}(\RR^+\mu)=\{0\}.
\end{equation}
A simple dimension calculus shows that $\mathfrak{m}_M(x)$ and
$T_xJ^{-1}(\RR^+\mu)$ are complementary subspaces of $T_xU$. We
have also seen that they are perpendicular with respect to
$\omega_x\mid_{T_xU}$. Using relation \eqref{eq3}, we can now
apply Lemma \ref{lema3} for $V:=T_xM$, $W:=\mathfrak{m}_M(x)$, and
$X:=T_xJ^{-1}(\RR^+\mu)$. Thus, we obtain that
$\operatorname{ker}\omega_x\mid_{T_xU}=T_x(K_\mu\cdot
x)=\operatorname{ker}\omega_x\mid_{T_xJ^{-1}(\RR^+\mu)}$, for any
$x\in J^{-1}(\RR^+\mu)$. This shows that $\omega_{\RR^+\mu}$ is a
non-degenerate form, completing thus our proof.
\end{proof}

Notice that in the case $\mu=0$ we recover the reduced symplectic
space at zero. Without the hypothesis that $K_\mu$ acts properly
on $J^{-1}(\RR^+\mu)$, the quotient $M_{\RR^+\mu}$ may not be
Hausdorff. As the Lemma \ref{lema1} proves, the second hypothesis
of this theorem ensures that $M_{\RR^+\mu}$ is an orbifold. If
$\mu$ is non-zero and the kernel and isotropy groups of $\mu$
coincide, then the quotient may fail to be symplectic. This always happens 
when the coadjoint orbit of $\mu$ is nilpotent (i.e. $\mathcal{O}_{\mu}=\mathcal{O}_{\RR^+\mu}$).
As an example, consider the cotangnet lift of the action of $SL(2,\RR)$ on 
itself by left translations. Identifying $T^*(SL(2,\RR))$ with $SL(2,\RR)\times sl(2,\RR)^*$ and taking 
as momentum value $\mu:=\langle \left(\begin{matrix}0 & 1\\ 0 & 0
\end{matrix}\right),\cdot\rangle$, one can check that $\mathcal{O}_{\mu}=\mathcal{O}_{\RR^+\mu}$.
Even more, $\ker{\mu}=\{\left(\begin{matrix}\alpha & 0\\ \gamma & -\alpha
\end{matrix}\right)|\alpha, \gamma\in\RR\}\supset\mathfrak{g}_\mu=\{\left(\begin{matrix}0 & 0\\ \gamma & 0
\end{matrix}\right)|\gamma\in\RR\}$. Except the last one, all 
the hypothesis of Theorem \ref{reducere--unu} are fulfilled.
Since the cotangent action is free, and $\operatorname{dim} J^{-1}(\RR^+\mu)=4$, 
$K_\mu=\{\left(\begin{matrix}1 & 0\\ t & 1 \end{matrix}\right)|t\in\RR\}$, the quotient $J^{-1}(\RR^+\mu)/K_\mu$ is $3$-dimesnional, and hence not symplectic.

\begin{co}
In the hypothesis of Theorem \ref{reducere--unu}, if the dimension
of $M$ is $2n$ and the Lie group $G$ is $d$-dimensional, then the
dimension of the symplectic quotient is $2n-2k-m=2n-p-d+2$, where
$p=\operatorname{dim}G_\mu=k+1$.
\end{co}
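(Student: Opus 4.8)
The plan is to read off the dimension of $M_{\RR^+\mu}$ as $\dim J^{-1}(\RR^+\mu)-\dim K_\mu$ and then rewrite the result in the two claimed forms, which differ only by an elementary substitution. The one genuinely geometric input is the dimension of $J^{-1}(\RR^+\mu)$; everything else is bookkeeping with the Lie algebra decompositions already in play.

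First I would pin down the three integers involved. Since $K_\mu$ is the connected Lie group with Lie algebra $\mathfrak{k}_\mu=\ker(\mu|_{\mathfrak{g}_\mu})$, its orbits have dimension $k:=\dim\mathfrak{k}_\mu$; by Lemma \ref{lema1} the transversality hypothesis $2^\circ$ makes the $K_\mu$-action locally free on $J^{-1}(\RR^+\mu)$, so each orbit there genuinely has dimension $k$. Next, rather than assume $p=k+1$, I would derive it from hypothesis $3^\circ$: since $\ker\mu\cap\mathfrak{g}_\mu=\ker(\mu|_{\mathfrak{g}_\mu})=\mathfrak{k}_\mu$, Grassmann's formula applied to $\ker\mu+\mathfrak{g}_\mu=\mathfrak{g}$ together with $\mu\neq 0$ (so $\dim\ker\mu=d-1$) gives $d=(d-1)+p-k$, i.e. $k=p-1$. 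Finally, from the splitting $\mathfrak{g}=\mathfrak{g}_\mu\oplus\mathfrak{m}$ of \eqref{descompunere--m} one reads off $m:=\dim\mathfrak{m}=d-p$.

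For the dimension of $J^{-1}(\RR^+\mu)$ I would invoke hypothesis $2^\circ$: since $J$ is transverse to the one-dimensional submanifold $\RR^+\mu\subset\mathfrak{g}^*$, its preimage is a submanifold of $M$ whose codimension equals that of the ray, namely $d-1$, whence $\dim J^{-1}(\RR^+\mu)=2n-(d-1)=2n-d+1$. Equivalently, one can bypass transversality theory and reuse the proof of Theorem \ref{reducere--unu}: there $U=\Psi^{-1}(0)$ has dimension $2n-k$ because $K_\mu$ acts locally freely, and $T_xU=\mathfrak{m}_M(x)\oplus T_xJ^{-1}(\RR^+\mu)$ with $\dim\mathfrak{m}_M(x)=m$, giving $\dim J^{-1}(\RR^+\mu)=2n-k-m=2n-d+1$.

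It then remains only to assemble the pieces. As $K_\mu$ acts properly (hypothesis $1^\circ$) and locally freely, $M_{\RR^+\mu}=J^{-1}(\RR^+\mu)/K_\mu$ is an orbifold of dimension $\dim J^{-1}(\RR^+\mu)-\dim K_\mu=(2n-d+1)-k$. Substituting $m=d-p$ turns this into $2n-2k-m$, and substituting $p=k+1$ turns it into $2n-p-d+2$, establishing both stated equalities. There is no real obstacle here beyond that last bit of care: one must verify that $p=k+1$ is forced by hypothesis $3^\circ$ and $\mu\neq 0$ (the identity, and indeed the whole computation, degenerates when $\mu=0$, where ray reduction collapses to reduction at zero), so that the two closed forms of the answer genuinely coincide.
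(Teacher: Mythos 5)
Your proposal is correct: the paper states this corollary without proof, and your argument is exactly the immediate dimension count it intends, reusing the ingredients already established in the proof of Theorem \ref{reducere--unu} (transversality gives $\operatorname{codim} J^{-1}(\RR^+\mu)=d-1$, the proper locally free $K_\mu$-action gives an orbifold quotient of dimension $\dim J^{-1}(\RR^+\mu)-k$, and the splitting $T_xU=\mathfrak{m}_M(x)\oplus T_xJ^{-1}(\RR^+\mu)$ gives the alternative count $2n-k-m$). Your derivation of $p=k+1$ from hypothesis $3^\circ$ via Grassmann's formula, together with the observation that this (and hence the second closed form) requires $\mu\neq 0$, is a worthwhile detail the paper leaves implicit.
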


In the symplectic point reduction, the reduced spaces of exact
manifolds are not always exact. This is, however true, only if one
performs reduction at zero momentum. Recall, for instance, that
coadjoint orbits which are point reduced spaces are not
necessarily exact symplectic manifolds. A counter example may be
found in \cite{marsden--ratiu}, Example $(a)$ of Section $14.5$.
Surprisingly, ray quotients of exact symplectic manifolds are
exact for any momentum.

\begin{co}
In the hypothesis of Theorem \ref{reducere--unu}, if $(M,
\omega)=(M,-d\theta)$ with $\theta$ a $K_\mu$-invariant one form,
then the ray quotient will also be exact.
\end{co}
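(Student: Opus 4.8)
The plan is to obtain an explicit primitive for $\omega_{\RR^+\mu}$ by pushing $\theta$ down to the quotient, in perfect analogy with the way $\omega_{\RR^+\mu}$ itself was produced in Theorem \ref{reducere--unu}. Concretely, I would look for a one-form $\theta_{\RR^+\mu}$ on $M_{\RR^+\mu}$ characterized by $\pi^*_{\RR^+\mu}\theta_{\RR^+\mu}=i^*_{\RR^+\mu}\theta$. Granting its existence the conclusion is immediate: since $\pi_{\RR^+\mu}$ is a surjective submersion of orbifolds, $\pi^*_{\RR^+\mu}$ is injective on forms, and from
\[
\pi^*_{\RR^+\mu}(-d\theta_{\RR^+\mu})=-d(i^*_{\RR^+\mu}\theta)=i^*_{\RR^+\mu}(-d\theta)=i^*_{\RR^+\mu}\omega=\pi^*_{\RR^+\mu}\omega_{\RR^+\mu}
\]
one reads off $\omega_{\RR^+\mu}=-d\theta_{\RR^+\mu}$, so that $(M_{\RR^+\mu},\omega_{\RR^+\mu})$ is exact.

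The real content is thus to produce $\theta_{\RR^+\mu}$, i.e. to verify that $i^*_{\RR^+\mu}\theta$ is basic with respect to the quotient map $\pi_{\RR^+\mu}$; this amounts to checking $K_\mu$-invariance and horizontality. Invariance is the easy half: the inclusion $i_{\RR^+\mu}$ is $K_\mu$-equivariant and $\theta$ is $K_\mu$-invariant by hypothesis, so $i^*_{\RR^+\mu}\theta$ is $K_\mu$-invariant. For horizontality I recall from the proof of Theorem \ref{reducere--unu} that the vertical distribution of $\pi_{\RR^+\mu}$ at a point $x$ is spanned by the fundamental vector fields $\xi_{J^{-1}(\RR^+\mu)}(x)$ with $\xi\in\mathfrak{k}_\mu$, so it suffices to evaluate
\[
(i^*_{\RR^+\mu}\theta)(x)\big(\xi_{J^{-1}(\RR^+\mu)}(x)\big)=\theta(x)(\xi_M(x)),\qquad \xi\in\mathfrak{k}_\mu,\ x\in J^{-1}(\RR^+\mu),
\]
and show it vanishes.

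This last computation is where I expect the only delicate point, and it is precisely where the exact structure enters. Using the canonical momentum map of the exact manifold, $\langle J,\xi\rangle=\theta(\xi_M)$ — which follows from Cartan's formula $i_{\xi_M}\omega=d(\theta(\xi_M))$ applied to the $K_\mu$-invariant $\theta$ — I would rewrite the quantity above as $\langle J(x),\xi\rangle$. Since $x\in J^{-1}(\RR^+\mu)$ gives $J(x)=r\mu$ with $r>0$, and since $\xi\in\mathfrak{k}_\mu=\ker(\mu|_{\mathfrak{g}_\mu})$ forces $\langle\mu,\xi\rangle=0$, I obtain $\theta(x)(\xi_M(x))=r\langle\mu,\xi\rangle=0$; hence $i^*_{\RR^+\mu}\theta$ annihilates the vertical directions and descends. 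The subtlety to nail down is the normalization of the momentum map: a priori $\langle J,\xi\rangle-\theta(\xi_M)$ is only constant on each connected component, so the argument genuinely requires the standard exact-symplectic normalization $J^\xi=i_{\xi_M}\theta$ (the choice adopted throughout the exact case) in order that this constant vanish and the horizontality identity hold on the nose. With any other equivariant choice one would only get that $\theta(\xi_M)$ is constant along $J^{-1}(\RR^+\mu)$, which is not enough for descent; fixing the canonical primitive-based momentum map removes the ambiguity and completes the proof.
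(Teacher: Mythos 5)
Your proof is correct and follows essentially the same route as the paper's: both show that $i^*_{\RR^+\mu}\theta$ is basic ($K_\mu$-invariance plus horizontality, the latter via $\theta(\xi_M)(x)=J(x)(\xi)=r\mu(\xi)=0$ for $\xi\in\mathfrak{k}_\mu$), descend it to a one-form $\theta_{\RR^+\mu}$, and then deduce $\omega_{\RR^+\mu}=-d\theta_{\RR^+\mu}$ from the injectivity of $\pi^*_{\RR^+\mu}$. Your closing remark on the normalization $J^\xi=i_{\xi_M}\theta$ flags a real subtlety that the paper's proof uses silently when it writes $i^*_{\RR^+\mu}\theta(\xi_{J^{-1}(\RR^+\mu)})=J(\cdot)(\xi)$.
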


\begin{proof}
We want to show that $i^*_{\RR^+\mu} \theta$ is a basic form for
the projection $\pi_{\RR^+\mu}: J^{-1}(\RR^+\mu)\rightarrow
M_{\RR^+\mu}$. The $K_\mu$-invariance ensures that
$L_{\xi_{J^{-1}(\RR^+\mu)}} i^*_{\RR^+\mu}\theta=0$, for any $\xi$
in the kernel algebra of $\mu$. For $x\in J^{-1}(\RR^+\mu)$, we
have that
$$
i^*_{\RR^+\mu}\theta(i_\mu(x))(\xi_{J^{-1}(\RR^+\mu)}(x))=J(x)(\xi)=r\mu(\xi)=0.
$$

Hence, $i_{\xi_{J^{-1}(\RR^+\mu)}}(i^*_{\RR^+\mu} \theta )=0$, for
any $\xi\in\mathfrak{k}_\mu$, proving that $i^*_{\RR^+\mu} \theta$
is basic. Therefore, there is a one form $\theta_{\RR^+_\mu}$ such
that $i^*_{\RR^+\mu} \theta=\pi^*_{\RR^+\mu} \theta$. Using
Theorem \ref{reducere--unu}, we get that
$$
\pi^*_{\RR^+\mu} (-d\theta_{\RR^+\mu})=d(-\pi^*_{\RR^+\mu}
\theta_{\RR^+\mu})=-di^*_{\RR^+\mu}
\theta=i^*_{\RR^+\mu}(-d\theta)=i^*_{\RR^+\mu}
\omega=\pi^*_{\RR^+\mu} \omega_{\RR^+\mu}.
$$
Since $\pi^*_{\RR^+\mu}$ is injective we obtain that
$\omega_{\RR^+\mu}=-d\theta_{\RR^+\mu}$.
\end{proof}

A large class of examples can be obtained in the case when $(M,
\omega)$ is the cotangent bundle of a manifold $Q$ endowed with
the canonical symplectic form $\omega_0=-d\theta_0$. We treat
this case in Section \ref{sectiunea--patru}, Corollary
\ref{red--raza--cotang}.

 We will now extend this reduction
procedure to the metric context, i.e. for K\"ahler manifolds.

\begin{te}
\label{reducere--doi} Let $(M,\operatorname{g}, \omega)$ be a K\"ahler manifold
and $G$ a Lie group acting on $M$ by Hamiltonian
symplectomorphisms. If $J:M\rightarrow\mathfrak{g}^*$ is the
momentum map associated to the action of $G$ and $\mu$ an element
of $\mathfrak{g}^*$, assume that:

$1^\circ$ $\Ker\mu+\mathfrak{g}_\mu=\mathfrak{g}$;

$2^\circ$ the action of $K_\mu$ on $J^{-1}(\RR^+\mu)$ is proper and
by isometries;

$3^\circ$ $J$ is transverse to $\RR^+\mu$.

Then the ray quotient at $\mu$
$$M_{\RR^+\mu}:=J^{-1}(\RR^+\mu)/K_\mu$$ is a K\"ahler orbifold with respect
to the projection of the metric $\operatorname{g}$.
\end{te}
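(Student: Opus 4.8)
The plan is to obtain the symplectic datum from Theorem~\ref{reducere--unu} and to add to it a single complex-linear-algebra statement. The three hypotheses here are exactly those of Theorem~\ref{reducere--unu} (hypothesis $2^\circ$ here merely strengthens properness to properness-and-isometry), so that theorem already supplies a closed, non-degenerate $\omega_{\RR^+\mu}$ with $\pi^*_{\RR^+\mu}\omega_{\RR^+\mu}=i^*_{\RR^+\mu}\omega$ and identifies the vertical distribution of $\pi_{\RR^+\mu}$ as $T_x(K_\mu\cdot x)=\{\xi_M(x)\mid\xi\in\mathfrak{k}_\mu\}$. Since $K_\mu$ acts properly and by isometries, $i^*_{\RR^+\mu}g$ is $K_\mu$-invariant and basic, hence descends to a metric $g_{\RR^+\mu}$ turning $\pi_{\RR^+\mu}$ into a Riemannian submersion of orbifolds, under which $T_{[x]}M_{\RR^+\mu}$ is identified with the horizontal space $H_x:=(T_x(K_\mu\cdot x))^{\perp_g}\cap T_xJ^{-1}(\RR^+\mu)$. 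Everything then comes down to transporting the K\"ahler structure $\mathbb{J}$ of $M$ (the complex structure, \emph{not} the momentum map $J$) to the quotient.

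First I would rewrite the momentum geometry in terms of $g$ and $\mathbb{J}$ via the compatibility identity $\omega(\cdot,\cdot)=g(\mathbb{J}\cdot,\cdot)$, which converts symplectic orthogonals into metric ones through $W^{\omega}=(\mathbb{J}W)^{\perp_g}$ and $W^{\perp_g}=(\mathbb{J}W)^{\omega}$. Using $\ker T_xJ=(\mathfrak{g}_M(x))^{\omega}$ and $T_xJ^{-1}(\RR^+\mu)=(T_xJ)^{-1}((\ker\mu)^{\circ})$, the same identity yields the clean description
\[
T_xJ^{-1}(\RR^+\mu)=\bigl((\ker\mu)_M(x)\bigr)^{\omega}=\bigl(\mathbb{J}(\ker\mu)_M(x)\bigr)^{\perp_g}.
\]
Intersecting with $(T_x(K_\mu\cdot x))^{\perp_g}$ and using $\mathfrak{k}_\mu\subset\ker\mu$, this gives the compact formulas $H_x=\bigl((\mathfrak{k}_\mu)_M(x)+\mathbb{J}(\ker\mu)_M(x)\bigr)^{\perp_g}$ and, applying $\mathbb{J}$, $\mathbb{J}H_x=\bigl((\ker\mu)_M(x)+\mathbb{J}(\mathfrak{k}_\mu)_M(x)\bigr)^{\perp_g}$.

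The reduced structure is then forced: on $H_x$ one must take $\hat{\mathbb{J}}=\operatorname{pr}_{H_x}\circ\,\mathbb{J}|_{H_x}$, and a short computation shows $\hat{\mathbb{J}}^2=-\operatorname{id}$ precisely when $\mathbb{J}H_x=H_x$, after which $\hat{\mathbb{J}}=\mathbb{J}|_{H_x}$ is $K_\mu$-invariant (as $K_\mu$ preserves $g$ and $\omega$, hence $\mathbb{J}$) and automatically compatible with $g_{\RR^+\mu}$ and $\omega_{\RR^+\mu}$ because $g$, $\omega$, $\mathbb{J}$ are pointwise compatible. Thus the entire theorem hinges on the complex-invariance of the horizontal distribution, and this is the step I expect to be hard. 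By the two formulas above it is equivalent to $(\mathfrak{k}_\mu)_M(x)+\mathbb{J}(\ker\mu)_M(x)=(\ker\mu)_M(x)+\mathbb{J}(\mathfrak{k}_\mu)_M(x)$; feeding in the splitting $\ker\mu=\mathfrak{k}_\mu\oplus\mathfrak{m}$ induced by $\mathfrak{g}=\mathfrak{g}_\mu\oplus\mathfrak{m}$, $\mu|_{\mathfrak{m}}=0$ of \eqref{descompunere--m} (hypothesis $1^\circ$), it reduces to showing that $\mathfrak{m}_M(x)$ and $\mathbb{J}\mathfrak{m}_M(x)$ coincide modulo the $\mathbb{J}$-invariant subspace $(\mathfrak{k}_\mu)_M(x)+\mathbb{J}(\mathfrak{k}_\mu)_M(x)$. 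I would attack this with the Lemma~\ref{lema2}--Lemma~\ref{lema3} mechanism of Theorem~\ref{reducere--unu}, now applied to the pair $\bigl(T_x(K_\mu\cdot x),\mathbb{J}T_x(K_\mu\cdot x)\bigr)$, combined with the transversality isomorphism $T_xJ\colon\mathfrak{m}_M(x)\xrightarrow{\sim}T_{t\mu}(G\cdot t\mu)$ of \eqref{eq1} and the constraint $\RR\mu\cap T_{t\mu}(G\cdot t\mu)=\{0\}$; this is exactly the obstruction that dooms naive K\"ahler point reduction, and the point of the ray construction is that the offending directions $\mathfrak{m}_M(x)$ now pair up correctly under $\mathbb{J}$.

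Once $\mathbb{J}H_x=H_x$ is in hand, integrability is routine: for horizontal lifts of vector fields on $M_{\RR^+\mu}$ the Nijenhuis tensor of $\hat{\mathbb{J}}$ is the $\pi_{\RR^+\mu}$-projection of the Nijenhuis tensor of the integrable $\mathbb{J}$, hence vanishes, so $\hat{\mathbb{J}}$ is an integrable complex structure; together with $d\omega_{\RR^+\mu}=0$ from Theorem~\ref{reducere--unu} and the compatibility above, $(M_{\RR^+\mu},g_{\RR^+\mu},\omega_{\RR^+\mu},\hat{\mathbb{J}})$ is a K\"ahler orbifold. The orbifold points are handled exactly as in Theorem~\ref{reducere--unu}, by passing to slices on which $K_\mu$ acts freely.
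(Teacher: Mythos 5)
Your structural reduction is correct and is, in substance, the paper's own strategy: Theorem \ref{reducere--unu} supplies $(M_{\RR^+\mu},\omega_{\RR^+\mu})$; properness and isometry of the $K_\mu$-action make $\pi_{\RR^+\mu}$ a Riemannian submersion; the normal bundle of $J^{-1}(\RR^+\mu)$ is $\mathbb{J}(\ker\mu)_M(x)=\mathbb{J}(\mathfrak{k}_\mu)_M(x)\oplus\mathbb{J}\mathfrak{m}_M(x)$; the whole theorem reduces to the invariance $\mathbb{J}H_x=H_x$; and integrability then follows by projecting the Nijenhuis tensor, exactly as the paper does. The gap is that you never prove $\mathbb{J}H_x=H_x$. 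You correctly identify it as equivalent to the statement that $\mathfrak{m}_M(x)$ and $\mathbb{J}\mathfrak{m}_M(x)$ agree modulo $(\mathfrak{k}_\mu)_M(x)+\mathbb{J}(\mathfrak{k}_\mu)_M(x)$ (call this $(\ast)$), and then you only announce an attack via Lemmas \ref{lema2}--\ref{lema3}, the isomorphism \eqref{eq1}, and $\RR\mu\cap T_{t\mu}(G\cdot t\mu)=\{0\}$. Since $(\ast)$ is the entire mathematical content of this theorem beyond Theorem \ref{reducere--unu}, leaving it as a declaration of intent means the proof is missing its core.

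Moreover, the announced plan cannot be carried out: every ingredient you list is purely symplectic, while $(\ast)$ constrains how $\mathbb{J}$ interacts with $\mathfrak{m}_M(x)$, and $(\ast)$ simply does not follow from hypotheses $1^\circ$--$3^\circ$. What Lemmas \ref{lema2}--\ref{lema3} and \eqref{eq1} actually yield is that $\omega|_{\mathfrak{m}_M(x)}$ is nondegenerate (it is a multiple of the pullback of the Kostant--Kirillov--Souriau form of the coadjoint orbit), and a symplectic subspace of a K\"ahler vector space need not be $\mathbb{J}$-invariant. Concretely, let $G=SU(2)$ act diagonally on $M=\CC^2\oplus\CC^2$ with its flat K\"ahler structure and take any $\mu\neq 0$: then $\mathfrak{g}_\mu$ is one-dimensional with $\mu|_{\mathfrak{g}_\mu}\neq 0$, so $\mathfrak{k}_\mu=\{0\}$, $K_\mu=\{e\}$, and hypotheses $1^\circ$--$3^\circ$ all hold. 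Choosing $\mu$ dual to $\operatorname{diag}(i,-i)$ and $\mathfrak{m}=\operatorname{span}\{\xi_1,\xi_2\}$ with $\xi_1=\left(\begin{smallmatrix}0&1\\-1&0\end{smallmatrix}\right)$, $\xi_2=\left(\begin{smallmatrix}0&i\\i&0\end{smallmatrix}\right)$, the point $x=\bigl((2,0),(0,i)\bigr)$ lies in $J^{-1}(\tfrac32\mu)$, and $\mathfrak{m}_M(x)=\operatorname{span}_\RR\{((0,-2),(i,0)),\,((0,2i),(-1,0))\}$ is not closed under multiplication by $i$; since here $H_x=T_xJ^{-1}(\RR^+\mu)=\bigl(\mathbb{J}\mathfrak{m}_M(x)\bigr)^{\perp_{\operatorname{g}}}$, the horizontal space is not $\mathbb{J}$-invariant. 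You should also know that the paper's own proof of this step consists of the single chain $\operatorname{g}(\mathcal{C}v,\mathcal{C}\eta_{jM})=\operatorname{g}(v,\eta_{jM})=0$ for horizontal $v$, i.e.\ an unproved assertion that $H_x\perp_{\operatorname{g}}\mathfrak{m}_M(x)$, which is precisely equivalent to $(\ast)$ and fails in the same example. So you isolated exactly the step on which the theorem stands or falls; but neither your plan nor the paper's assertion establishes it, and it becomes true only under additional assumptions that kill $\mathfrak{m}$ (for instance $G$ abelian, or $\mu$ totally isotropic with $G_\mu=G$) or otherwise force $\mathfrak{m}_M(x)$ to be $\mathbb{J}$-invariant.
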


\begin{proof}
From Theorem \ref{reducere--unu}, we already know that
$(M_{\RR^+\mu}, \omega_{\RR^+\mu})$ is a symplectic orbifold. It
remains to show that the symplectic structure is also a K\"ahler
one with corresponding metric given by the projection of
$\operatorname{g}$. The second hypothesis of the theorem ensures that
$(J^{-1}(\RR^+\mu)\,,\, i_{\RR^+\mu}^* \operatorname{g})$ is an
isometric Riemannian submanifold of $M$.

Again, we will use a decomposition
$\mathfrak{g}=\mathfrak{g}_\mu\oplus\mathfrak{m}$, where
$\mu\mid_\mathfrak{m}=0$. Let $\Psi:M\rightarrow \mathfrak{k}_\mu^*$
be the momentum map associated to the action of $K_\mu$ on $M$ and
$\mathfrak{m}_M:=\{\xi_M(x)\mid\xi\in M\}$. In the proof above we
have already seen that
\begin{equation}
\label{eq4}
T_xJ^{-1}(\RR^+\mu)\oplus\mathfrak{m}_M(x)=T_x\Psi^{-1}(0),
\end{equation}
 for
any $x\in J^{-1}(\RR^+\mu)$. Let $\{\xi_1,\cdots\xi_k\}$ and
$\{\eta_1,\cdots\eta_m\}$ be basis in $\mathfrak{k}_\mu$ and
$\mathfrak{m}$ respectively, with $m=\dim\mathfrak{m}$ and
$k=\dim\mathfrak{k}_\mu$. Without loss of generality, we can assume
that the infinitesimal isometries $\{\xi_{iM}\}_{i=1, k}$ and
$\{\eta_{jM}\}_{j=1, m}$ are $\operatorname{g}$-orthogonal. Thus, $\{J\xi_{iM}, J\eta_{jM}\}_{i, j}$ are
linearly independent in each point of $J^{-1}(\RR^+\mu)$. Even more,
$\{J\xi_{iM}, J\eta_{jM}\}_{i, j}$ belong to the normal fiber bundle
of $J^{-1}(\RR^+\mu)$ since
$$
g(J\eta_{jM}, V)=g(J\xi_{iM}, V)=\omega(\xi_{iM},
V)=-TJ(V)(\xi)=-r\mu(\xi_i)=-r\mu(\eta_j)=0
$$
for any $V$ vector field on $J^{-1}(\RR^+\mu)$. The next step is to
show that $\{J\xi_{iM}\}_{i=1, k}$ is a basis in the normal bundle
of $T\Psi^{-1}(0)$. Notice that
$\{\xi_{iM}\mid_{J^{-1}(\RR_+\mu)}\}_{i=1,k}$ are tangent to
$J^{-1}(\RR^+\mu)$ and
$$
\operatorname{g}(J\xi_{iM},
V)=\omega(\xi_{iM},V)=T\Psi(V)(\xi_i)=Ti^{T}(TJ(V))(\xi_i)=0,
$$
for any $V$ differentiable section of $T\Psi^{-1}(0)$. Here, we
have used that $\Psi=i^*_T\circ J$, where
$i_T^*:\mathfrak{g}^*\rightarrow \mathfrak{k}^*_\mu$ is the
canonical projection. Therefore, $\{J\xi_{iM}\}_{i=1, k}$ are
vector fields normal to $TU$, where
$U=J^{-1}(k^\circ_\mu)=\Psi^{-1}(0)$. As $\dim TU=\dim
M-\dim\mathfrak{k}_\mu$, these vector fields form a basis of the
normal fiber bundle to $TU$. Equation \eqref{eq4} implies that
$\{J\xi_{iM}, J\eta_{jM}\}_{i, j}$ form a basis of the normal
bundle to $J^{-1}(\RR^+\mu)$. Since the action of $K_\mu$ on
$J^{-1}(\RR^+\mu)$ is isometric, $i_{\RR^+\mu}^*\operatorname{g}$
projects on $M_{\RR^+\mu}$ in $\operatorname{g}_{\RR^+\mu}$ and
the projection $\pi_{\RR^+\mu}$ becomes thus a Riemannian
submersion. Obviously, the vertical distribution of this
Riemannian submersion is given by $\{\xi_{iM}\}_{i=1,k}$. Then,
$T_xJ^{-1}(\RR^+\mu)=\{\xi_{iM}\}(x)\oplus \mathcal{H}_x$, where
$\mathcal{H}_x$ is the horizontal distribution at $x$ associated
to the Riemannian submersion  $\pi_\mu$. To see that
$(\omega_{\RR^+\mu},\operatorname{g}_{\RR^+\mu})$ is an almost
K\"ahler structure, we need to check that
$$
\omega_{\RR^+\mu}([x])(T_x\pi_\mu v,T_x\pi_\mu
w)=\operatorname{g}_{\RR^+\mu}([x])(\mathcal{C}_{\RR^+\mu} T_x\pi_\mu v,T_x\pi_\mu
w),
$$
for any $[x]=\pi_\mu(x)\in J^{-1}(\RR^+\mu)$ and $v,w\in
\mathcal{H}_x$. Here, $\mathcal{C}_{\RR^+\mu}$ denotes the
projection of the complex structure $\mathcal{C}$ of $\omega$.
Since $T_x\pi_\mu$ is an isomorphism from the horizontal space at
$x$ onto $T_{[x]}M_{\RR^+\mu}$ which identifies
$(\omega_{\RR^+\mu},\operatorname{g}_{\RR^+\mu})([x])$ with
$(i_{\RR^+\mu}^*\omega,i_{\RR^+\mu}^*\operatorname{g})\mid_{\mathcal{H}_x}$
suffices to show that the horizontal distribution is
$\mathcal{C}$-invariant. Let $v\in \mathcal{H}_x$. Then
$\omega(\mathcal{C}v, \xi_{iM})=\operatorname{g}(v, \xi_{iM})=0$,
for any $\xi_i\in\mathfrak{k}_\mu$. Also
$\operatorname{g}(\mathcal{C}v,\mathcal{C}\xi_{iM})=\operatorname{g}(v,\xi_{iM})=0$
and
$\operatorname{g}(\mathcal{C}v,\mathcal{C}\eta_{jM})=\operatorname{g}(v,\eta_{jM})=0$,
for all $i=1,k$ and $j=1,m$. It follows that $\mathcal{C}v$ is
also a horizontal vector. To show that $\mathcal{C}_{\RR^+\mu}$ is
integrable we will evaluate the Nijenhuis tensor $N_{\RR^+\mu}$. Thus,
\begin{align*}
N_{\RR^+\mu}(T_x\pi_\mu(v), T_x\pi_\mu(w))
 =[T_x\pi_\mu( v),T_x\pi_\mu(w)]-[\mathcal{C}_{\RR^+\mu}
 T_x\pi_\mu(v),\mathcal{C}_{\RR^+\mu}
T_x\pi_\mu(w)]&\\+\mathcal{C}_{\RR^+\mu}([\mathcal{C}_{\RR^+\mu}
T_x\pi_\mu(v),T_x\pi_\mu(w)])+\mathcal{C}_{\RR^+\mu}([T_x\pi_\mu(v),\mathcal{C}_{\RR^+\mu}
T_x\pi_\mu(w)])&\\
=T_x\pi_\mu([v,w])-T_x\pi_\mu
([\mathcal{C}v,\mathcal{C}w])+\mathcal{C}_{\RR^+\mu}(T_x\pi_\mu([\mathcal{C}v,w]))+\mathcal{C}_{\RR^+\mu}(T_x\pi_\mu([v,\mathcal{C}w]))
\\=
T_x\pi_\mu([v,w]-[\mathcal{C}v,\mathcal{C}w])+T_x\pi_\mu(\mathcal{C}([\mathcal{C}v,w]))+T_x\pi_mu(\mathcal{C}([v,\mathcal{C}w]))
&\\= T_x\pi_\mu(N(v,w))=0,
\end{align*}
where $N$ is the Nijenhuis tensor of $(\omega,g)$. Thus,
$\mathcal{C}_{\RR^+\mu}$ is integrable and \newline
$(M_{\RR^+\mu},\omega_{\RR^+\mu},\operatorname{g}_{\RR^+\mu})$ a K\"ahler
manifold.
\end{proof}

\begin{re}
Unfortunately, non zero K\"ahler regular point reduction is not
canonical. As it is very well explained in \cite{bryant} (see
Exercise $3$), the complex structure may not leave invariant the
horizontal distribution of the Riemannian submersion given by the
quotient projection ($\pi_{\mu}:M\rightarrow M_\mu$). Therefore it
is not projectable on $M_\mu$. The solution proposed in the
literature, is based on the Shifting Theorem (see Theorem $6.5.2$
in \cite{ortega--ratiu}). More precisely, one endows the coadjoint
orbit of $\mu$, $\mathcal{O}_\mu$ with a unique up to homotheties
K\"ahler-Einstein metric of positive Ricci curvature. For the
construction of this metric, see
\cite{kostant--doi}, Chapter $8$ in \cite{besse}, and
\cite{kirillov--doi}. This uniqueness modulo homotheties is
guaranteed by the choice of an $Ad^*$-invariant scalar product on
$\mathfrak{g}^*$. Then, one performs the zero reduction of the
K\"ahler difference of the base manifold $M$ and
$\mathcal{O}_\mu$. Unfortunately, this construction is correct
only in the case of totally isotropic momentum (i.e. $G_\mu=G$).
Otherwise, using the unique K\"ahler-Einstein form on the
coadjoint orbit, instead of the Kostant-Kirillov-Souriau form
makes impossible the use of the Shifting Theorem since the
momentum map of the orbit will no longer be the inclusion. Even
so, one could take by definition the reduced space at $\mu$
momentum to be the zero reduced space of the symplectic difference
of $M$ and $\mathcal{O}_\mu$.  But this reduced space is not
canonical, in the sense that  the pull-back through the quotient
projection of the reduced K\"ahler structure is no longer the
initial one. On the other hand, the ray K\"ahler reduction always
exists and is canonical.
\end{re}

\section{Cone and Boothby-Wang Compatibilities}
\label{sectiunea--patru} 

Traditionally, Sasakian manifolds where
defined via contact structures by adding a Riemannian metric with
certain compatibility conditions.
\begin{de}
A Sasakian structure on an exact contact manifold
$(S,\eta,\mathfrak{R})$ is a Riemannian metric $\operatorname{g}$
on $S$ such that  there is a $(1,1)$-tensor field $\Phi$ witch verifies the following identities
$$
\Phi^2=-Id+\eta\otimes\mathfrak{R}\quad\quad\eta(X)=\operatorname{g}(X,\mathfrak{R})\quad\quad
d\eta(X,Y)=\operatorname{g}(X,\Phi Y),
$$
for any vector fields $X$, $Y$.
\end{de}
  A good reference for this point of view is the book of D. E.
Blair, \cite{blair}.

There are other equivalent definitions of a Sasakian manifold and
in the following proposition we present four of them. The first
one is most in the spirit of the original definition of Sasaki
(see \cite{sasaki--unu}). The most geometric approach is
highlighted in the second definition. It only uses the holonomy
reduction of the associated cone metric and it was introduced by
C. P. Boyer and K. Galicki in \cite{boyer--galicki2}.

\begin{pr}
Let $(S,\operatorname{g})$ be a Riemannian manifold of dimension
$m$, $\nabla$ the associated Levi-Civita connection, and $R$ the
Riemannian curvature tensor of $\nabla$. Then, the following
statements are equivalent:
\begin{itemize}
\item there exists a unitary Killing vector field $\mathfrak{R}$
on $S$ so that the tensor field $\Phi$ of type $(1,1)$, defined by
$\Phi(X)=\nabla_X\mathfrak{R}$, satisfies the condition
$$
(\nabla_X\Phi)(Y)=\operatorname{g}(\mathfrak{R},Y)X-\operatorname{g}(X,Y)\mathfrak{R},
$$
for any pair of vector fields $X$ and $Y$ on $S$; 
\item 
the
holonomy group of the cone metric on $S$, $(\mathcal{C}(S),
\mathcal{C}(\operatorname{g})):=(S\times\RR^+,
r^2\operatorname{g}+dr^2)$ reduces to a subgroup of
$U(\frac{m+1}{2})$. In particular, $m=2n+1$, for a $n\ge 1$ and
$(\mathcal{C}(S), \mathcal{C}(\operatorname{g}))$ is K\"ahler;
\item there exists a unitary Killing vector field $\mathfrak{R}$
on $S$ so that the Riemannian curvature satisfies the condition
$$
R(X,\mathfrak{R})Y=\operatorname{g}(\mathfrak{R},Y)X-\operatorname{g}(X,Y)\mathfrak{R},
$$
for any pair of vector fields $X$ and $Y$ on $S$; \item there
exists a unitary Killing vector field $\mathfrak{R}$ on $S$ so
that the sectional curvature of every section containing
$\mathfrak{R}$ equals one; \item $(S,\operatorname{g})$ is a
Sasakian manifold.
\end{itemize}
\end{pr}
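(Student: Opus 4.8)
The plan is to prove the five statements equivalent by separating the argument into an \emph{intrinsic} block, relating (1), (3), (4) and (5) entirely within the Levi-Civita geometry of $(S,\operatorname{g})$, and an \emph{extrinsic} block identifying these with the K\"ahler condition (2) on the cone $\mathcal{C}(S)$. Concretely I would establish the cycle $(5)\Rightarrow(1)\Rightarrow(3)\Rightarrow(4)\Rightarrow(1)$ together with $(1)\Rightarrow(5)$, and then close the loop by showing $(1)\Leftrightarrow(2)$. The unitary Killing field $\mathfrak{R}$ is the common object appearing in (1), (3), (4), and in the contact-metric data of the preceding Definition it is the Reeb field, so the first task is to check that these occurrences coincide.

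The easy links are the purely curvature-theoretic ones. For $(3)\Rightarrow(4)$, if $X$ is a unit vector orthogonal to $\mathfrak{R}$ then the sectional curvature of the plane spanned by $X$ and $\mathfrak{R}$ is $\operatorname{g}(R(X,\mathfrak{R})\mathfrak{R},X)=\operatorname{g}(\operatorname{g}(\mathfrak{R},\mathfrak{R})X-\operatorname{g}(X,\mathfrak{R})\mathfrak{R},X)=1$, using $|\mathfrak{R}|=1$; the converse $(4)\Rightarrow(3)$ follows by polarizing the sectional-curvature identity and invoking the symmetries of $R$ to recover the full tensorial expression. The link $(1)\Leftrightarrow(3)$ rests on the classical Kostant formula for the second covariant derivative of a Killing field, $\nabla^2_{X,Y}\mathfrak{R}=R(\mathfrak{R},X)Y$; since $\Phi=\nabla\mathfrak{R}$ by the definition in (1), one has $(\nabla_X\Phi)(Y)=\nabla^2_{X,Y}\mathfrak{R}$, so the value of $(\nabla_X\Phi)(Y)$ prescribed in (1) is, up to the sign fixed by the curvature convention, exactly the value of $R(X,\mathfrak{R})Y$ prescribed in (3).

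For $(1)\Leftrightarrow(5)$ I would unpack the contact-metric Definition: on such a manifold $\mathfrak{R}$ is the Reeb field of $\eta$, and a standard computation in contact metric geometry shows both that $\mathfrak{R}$ is Killing with $\Phi=\nabla\mathfrak{R}$ and that the \emph{normality} (Sasakian) condition is precisely $(\nabla_X\Phi)(Y)=\operatorname{g}(\mathfrak{R},Y)X-\operatorname{g}(X,Y)\mathfrak{R}$, which identifies the two descriptions. The most substantial step is $(1)\Leftrightarrow(2)$: on $\mathcal{C}(S)=S\times\RR^+$ I would define the almost complex structure $\mathcal{C}$ by $\mathcal{C}(r\,\partial_r)=\mathfrak{R}$ and $\mathcal{C}(X)=\Phi(X)-\eta(X)\,r\,\partial_r$ for $X$ tangent to $S$, check it is orthogonal for $\mathcal{C}(\operatorname{g})=r^2\operatorname{g}+dr^2$, and then compute $\nabla^{\mathcal{C}}\mathcal{C}$ using the cone connection formulas $\nabla^{\mathcal{C}}_{\partial_r}\partial_r=0$, $\nabla^{\mathcal{C}}_{\partial_r}X=\nabla^{\mathcal{C}}_X\partial_r=r^{-1}X$, and $\nabla^{\mathcal{C}}_XY=\nabla_XY-r\operatorname{g}(X,Y)\partial_r$. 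The computation reduces $\nabla^{\mathcal{C}}\mathcal{C}=0$ to exactly the Sasakian identity of (1), and since a reduction of the holonomy of $\mathcal{C}(\operatorname{g})$ to $U(n+1)$ is equivalent to the existence of a parallel orthogonal complex structure, i.e. to $\mathcal{C}(\operatorname{g})$ being K\"ahler, this closes the chain; the forced dimension $m=2n+1$ appears because the cone must be even-dimensional.

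The main obstacle I expect is the extrinsic step $(1)\Leftrightarrow(2)$: beyond the bookkeeping of the cone connection, one must carefully verify that $\nabla^{\mathcal{C}}\mathcal{C}=0$ encodes \emph{both} that $\mathfrak{R}$ is Killing with $\Phi=\nabla\mathfrak{R}$ and the covariant-derivative normality condition, and separately that the holonomy-reduction formulation is genuinely equivalent to the parallel-complex-structure (K\"ahler) formulation rather than merely almost-Hermitian. The intrinsic equivalences, by contrast, are routine once the Kostant formula and the sectional-curvature polarization are in place.
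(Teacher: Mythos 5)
First, there is nothing in the paper to compare your proof against: the paper states this Proposition and immediately defers to \cite{boyer--galicki2} (``For the proof, see...''), giving no argument of its own. Measured against that standard source, the sound parts of your outline are exactly the classical route: Kostant's formula $(\nabla_X\Phi)(Y)=\nabla^2_{X,Y}\mathfrak{R}=R(X,\mathfrak{R})Y$ for a Killing field identifies (1) with (3); the cone-connection computation reducing $\nabla^{\mathcal{C}}\mathcal{C}=0$ to the identity in (1), together with the dictionary between holonomy in $U(n+1)$ and a parallel orthogonal complex structure, gives $(1)\Leftrightarrow(2)$; and $(3)\Rightarrow(4)$ is immediate. (One caveat on $(1)\Leftrightarrow(5)$: the paper's Definition of a Sasakian structure as printed omits the normality condition and literally defines only a contact metric structure, so that step works only with the standard definition you are implicitly using.)

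The genuine gap is $(4)\Rightarrow(3)$. Polarizing $\operatorname{g}(R(X,\mathfrak{R})\mathfrak{R},X)=1$ over unit $X\perp\mathfrak{R}$ and using the pair symmetry of $R$ yields only $R(X,\mathfrak{R})\mathfrak{R}=X-\operatorname{g}(X,\mathfrak{R})\mathfrak{R}$, i.e. it determines the components $\operatorname{g}(R(X,\mathfrak{R})\mathfrak{R},Y)$. Condition (3) prescribes in addition the components $\operatorname{g}(R(X,\mathfrak{R})Y,Z)$ with $Y,Z\perp\mathfrak{R}$, and no symmetry of the curvature tensor recovers these from the former. What (4) actually gives, via Kostant's formula ($R(X,\mathfrak{R})\mathfrak{R}=(\nabla_X\Phi)\mathfrak{R}=-\Phi^2X$, using $\nabla_{\mathfrak{R}}\mathfrak{R}=0$), is $\Phi^2=-\operatorname{Id}+\eta\otimes\mathfrak{R}$ with $\eta=\operatorname{g}(\mathfrak{R},\cdot)$, i.e. precisely a K-contact structure; and K-contact is strictly weaker than Sasakian in dimension $\ge 5$. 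Concretely, the Boothby--Wang circle bundle over the Kodaira--Thurston manifold, with metric $\pi^*h+\eta\otimes\eta$ for $h$ a compatible almost-K\"ahler metric on the base, has a unit Killing Reeb field for which every plane section containing it has sectional curvature one; yet its first Betti number is $3$, while compact Sasakian manifolds have even first Betti number, so this manifold satisfies (4) and admits no Sasakian structure whatsoever. Hence $(4)\Rightarrow(3)$ cannot be obtained by polarization --- nor by any argument from the stated hypotheses; condition (4) as printed is in fact the defective item in the Proposition itself (the cited Boyer--Galicki characterization uses the full tensorial identity (3), not the sectional-curvature statement), and your cycle $(1)\Rightarrow(3)\Rightarrow(4)\Rightarrow(1)$ cannot close. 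The remainder of your plan stands if (4) is dropped or strengthened.
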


For the proof, see \cite{boyer--galicki2}.

\noindent \textbf{Example: Sasakian spheres.} One of the simplest
compact examples of Sasakian manifolds is the standard sphere
$S^{2n+1}\subset\CC^n$ with the metric induced by the flat one on
$\CC^n$. The characteristic Killing vector field (i.e. the
associated Reeb vector field) is given by
$\mathfrak{R}(p)=-i\vv{p}$, $i$ being the imaginary unit. The
contact form is given by $\eta:=\frac{1}{2}(dz-\sum^n y^jdx_j)$,
if $(x_j,y^j,z)_{j=1,n}$ are the canonical coordinates on the base
space.


Recall that if $(M,\eta)$ is a $2n+1$-dimensional exact contact
manifold, its symplectic cone is given by
$\mathcal{C}(M):=(M\times\RR^+ , d r^2\wedge\eta+r^2 d\eta)$ and
$M$ can be embedded in the cone as $M\times \{1\}$. The cone of a
Sasakian  $(S,\operatorname{g})$ manifold admits a canonical
K\"ahler structure given by $\mathcal{C}(\operatorname{g}):=r^2
\operatorname{g}+dr^2$. If a Lie group $G$ acts by contact
isometries on $S$, then this action can be lifted to the K\"ahler
cone as $g\cdot(x,r):=(g\cdot x,r)$, for any $g\in G$ and
$(x,r)\in\mathcal{C}(S)$. This action commutes with the
translations on the $\RR^+$ component and, in the Sasakian case,
it is by holomorphic isometries. In the Sasakian case, we can also
define a complex structure given as follows:
$$
\mathcal{C}Y:=\varphi Y-\eta(Y)R,\text{,}\, \mathcal{C}R:=\xi,
$$
where $R=r\partial_r$ is the vector field generated by the
$1$-group of transformations $\rho_t: (x,r)\rightarrow (x,tr)$ and
$\f:=\nabla\xi$, with $\nabla$ the Levi-Civita connection
associated to $\operatorname{g}$. It is easy to see that
$(S,\eta,\operatorname{g})$ is Einstein if and only if the cone
metric $\mathcal{C}(\operatorname{g)}$ is Ricci flat, i. e.,
$(\mathcal{C}(S), \mathcal{C}(\operatorname{g)})$ is Calabi-Yau
(i. e. K\"ahler Ricci-flat).

Let $\Phi:S\rightarrow\mathfrak{g}^*$ be the contact momentum map
associated to the $G$-action on $S$. The lifted action on the cone
is Hamiltonian and a corresponding equivariant symplectic momentum
map is given by
$$\Phi_s:\mathcal{C}(S)\rightarrow\mathfrak{g}^*\,\text{,}\,\Phi_s(x,r):=e^
s J(x)\,\text{,}\,\,\text{for any}\,(x,r)\in\mathcal{C}(S).$$

Having established the above notations, we are ready to prove that
reduction and the cone construction are commuting operations.

\begin{lm}
\label{lema4}
 Let $(S,\eta,\operatorname{g},\xi)$ be a Sasakian
manifold and $(\mathcal{C}(S),\mathcal{C}(\operatorname{g}), J)$
its K\"ahler cone. Suppose a Lie group $G$ acts on $S$ by strong
contactomorphisms and commuting with the action of the
$1$-parameter group generated be the field $R$. Let $\mu$ be an
element of the dual of the Lie algebra of $G$. Then the K\"ahler
cone of the reduced contact space at $\mu$ is the reduced space at
$\mu$ for the lifted action on $\mathcal{C}(S)$.
\end{lm}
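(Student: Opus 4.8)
The plan is to produce an explicit isomorphism of K\"ahler orbifolds between the symplectic ray reduction $(\mathcal{C}(S))_{\RR^+\mu}=J^{-1}(\RR^+\mu)/K_\mu$ of the cone and the K\"ahler cone $\mathcal{C}(S_{\RR^+\mu})$ of the contact reduced space $S_{\RR^+\mu}=\Phi^{-1}(\RR^+\mu)/K_\mu$, where $\Phi$ is the contact momentum map on $S$ and $J$ the induced symplectic momentum map on the cone. Two facts drive everything. First, $J$ is at each point a positive multiple of $\Phi$ — explicitly $J(x,r)=r^2\Phi(x)$ for the cone form $\omega_{\mathcal{C}}=d(r^2\eta)$ — so it is homogeneous under the dilation $\rho_t\colon(x,r)\mapsto(x,tr)$, namely $J\circ\rho_t=t^2J$ and $\rho_t^*\omega_{\mathcal{C}}=t^2\omega_{\mathcal{C}}$. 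Second, since $G$ acts by strong contactomorphisms commuting with $\rho$, its lift preserves $\omega_{\mathcal{C}}$ and the cone metric $\mathcal{C}(\operatorname{g})=r^2\operatorname{g}+dr^2$, i.e.\ acts by holomorphic isometries, so Theorem \ref{reducere--doi} applies once the regularity hypotheses (transversality of $\Phi$ to $\RR^+\mu$, properness of $K_\mu$, and $\Ker\mu+\mathfrak{g}_\mu=\mathfrak{g}$) are transported from the contact side, which I would check first.

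First I would identify the underlying spaces. Because $J(x,r)$ is a positive multiple of $\Phi(x)$, the condition $J(x,r)\in\RR^+\mu$ does not involve $r$ and is equivalent to $\Phi(x)\in\RR^+\mu$; hence $J^{-1}(\RR^+\mu)=\Phi^{-1}(\RR^+\mu)\times\RR^+$. The kernel group $K_\mu\subset G$ acts only on the first factor and commutes with the dilation, so
$$
(\mathcal{C}(S))_{\RR^+\mu}=J^{-1}(\RR^+\mu)/K_\mu=\big(\Phi^{-1}(\RR^+\mu)/K_\mu\big)\times\RR^+=S_{\RR^+\mu}\times\RR^+,
$$
which is the manifold underlying $\mathcal{C}(S_{\RR^+\mu})$; the dimension count in the Corollary to Theorem \ref{reducere--unu} confirms the equality of dimensions.

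Next I would match the geometric structures under this identification, writing $i_S$ and $\pi_S$ for the inclusion and projection of the contact reduction. The warped-product form of $\mathcal{C}(\operatorname{g})$ makes the metric step direct: on $\Phi^{-1}(\RR^+\mu)\times\RR^+$ one has $i_{\RR^+\mu}^*\mathcal{C}(\operatorname{g})=r^2\,(i_S^*\operatorname{g})+dr^2$, and since $K_\mu$ acts isometrically on the $S$-factor and trivially on $r$, the projected metric is $\operatorname{g}_{\RR^+\mu}=r^2\operatorname{g}_{S,\RR^+\mu}+dr^2$, i.e.\ the Riemannian cone metric over the reduced metric $\operatorname{g}_{S,\RR^+\mu}$ of $S_{\RR^+\mu}$. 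For the symplectic form, using $\pi_{\RR^+\mu}^*\omega_{\RR^+\mu}=i_{\RR^+\mu}^*\omega_{\mathcal{C}}$ together with $i_{\RR^+\mu}^*\omega_{\mathcal{C}}=d\big(r^2\,i_S^*\eta\big)=\pi_{\RR^+\mu}^*\,d(r^2\eta_{\RR^+\mu})$, where $\eta_{\RR^+\mu}$ is the reduced contact form furnished by contact reduction ($\pi_S^*\eta_{\RR^+\mu}=i_S^*\eta$), and the injectivity of $\pi_{\RR^+\mu}^*$, I would conclude $\omega_{\RR^+\mu}=d(r^2\eta_{\RR^+\mu})$, exactly the cone symplectic form of the reduced contact structure. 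Since Theorem \ref{reducere--doi} makes $(\mathcal{C}(S))_{\RR^+\mu}$ a K\"ahler orbifold and the above shows its metric is the Riemannian cone metric over $S_{\RR^+\mu}$, the cone characterization of Sasakian manifolds (second bullet of the Proposition) simultaneously yields that $S_{\RR^+\mu}$ is Sasakian and that its K\"ahler cone is $(\mathcal{C}(S))_{\RR^+\mu}$.

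The hard part will be the last identification: confirming that the Sasakian structure the cone characterization places on $S_{\RR^+\mu}$ is precisely the contact (Sasakian) reduced structure, rather than merely some Sasakian structure carried by the same metric. This amounts to checking that the descended dilation field $R=r\partial_r$ — which is $K_\mu$-invariant and tangent to $J^{-1}(\RR^+\mu)$, hence projectable — together with the reduced complex structure $\mathcal{C}_{\RR^+\mu}$ of Theorem \ref{reducere--doi}, reproduces the cone complex structure, so that $\mathcal{C}_{\RR^+\mu}R$ is the horizontal lift of the reduced Reeb field and $\eta_{\RR^+\mu}=\operatorname{g}_{S,\RR^+\mu}(\cdot,\mathfrak{R}_{\RR^+\mu})$. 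Concretely I would track the horizontal distribution of $\pi_{\RR^+\mu}$ from the proof of Theorem \ref{reducere--doi}, show it is $\rho$-invariant and splits orthogonally into the radial line $\RR R$ and the horizontal lift of $TS_{\RR^+\mu}$, and verify on these pieces that $\mathcal{C}_{\RR^+\mu}$ coincides with the cone complex structure defined in the text (pairing the radial and Reeb directions); once this $\mathcal{C}$-compatible orthogonal splitting is in place the reduced data match the cone data of $(\eta_{\RR^+\mu},\operatorname{g}_{S,\RR^+\mu})$ and the lemma follows.
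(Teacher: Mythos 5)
Your proposal is correct and takes essentially the same route as the paper: the paper likewise uses that the cone momentum map is a positive multiple of the contact one, so $\Phi_s^{-1}(\RR^+\mu)=\Phi^{-1}(\RR^+\mu)\times\RR^+$, defines the natural map $\Psi([x,r]):=([x],r)$ (well defined because the $K_\mu$-action commutes with the dilations), and verifies by pullback through the quotient projections that $\Psi$ matches both the cone symplectic forms and the cone metrics. The one place you overestimate the work is the final step you call ``the hard part'': once $\Psi$ intertwines both $\operatorname{g}$ and $\omega$, the complex structures automatically correspond, since $\mathcal{C}$ is determined pointwise by $\omega(X,Y)=\operatorname{g}(\mathcal{C}X,Y)$ — which is why the paper stops at ``symplectomorphic isometry.''
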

\begin{proof}
Let $K_\mu$ be the kernel group of $\mu$, $(S_{\RR^+\mu}, \eta_{\RR^+\mu}, \operatorname{g}_{\RR^+\mu})$ the
corresponding contact reduced space, and
$\mathcal{C}(S_{\RR^+\mu})$ the reduced space for the lift of the
action on the cone. Since the $K_\mu$-action commutes with
homotheties on the $\RR^+$ component, there is a natural
diffeomorphism between $\mathcal{C}(S_{\RR^+\mu})$ and
$\mathcal{C}(S)_{\RR^+\mu}$: $$
\Psi:\mathcal{C}(S)_{\RR^+\mu}\rightarrow\mathcal{C}(S_{\RR^+\mu})\,\,\text{,}\,\,
\Psi([x,r]):=([x],r),\,\,\forall
[x,r]\in\mathcal{C}(S)_{\RR^+\mu}.$$ Using the commutativity of
the diagram of Figure \ref{diag1}, it is easy to see that $\Psi$
is also a symplectomporphic isometry. Namely,
$$
(\Psi\circ\pi_{1\RR^+\mu})^*(\eta_{\RR^+\mu}\wedge dr^2
+r^2d\eta_{\RR^+\mu})=i_{1\RR^+\mu}^*(\eta\wedge dr^2 +r^2 d\eta),$$ and
$$
\Psi^*(\mathcal{C}(\operatorname{g}_{\RR^+\mu}))=\mathcal{C}(\operatorname{g})_{\RR^+\mu},
$$
where
$i_{1\RR^+\mu}:\Phi_s^{-1}(\RR^+\mu)\rightarrow\mathcal{C}(S)$,
$\pi_{1\RR^+\mu}:\Phi_s^{-1}(\RR^+\mu)\rightarrow\mathcal{C}(S)_{\RR^+\mu}$,
and $\pi_{\RR^+\mu}:\Phi^{-1}(\RR^+\mu)\rightarrow(S)_{\RR^+\mu}$
are the canonical inclusion and $K_\mu$-projections,
respectivelly.
\begin{figure}
\small
$$ \xymatrix{\\ \Phi^{-1}(\RR^+\mu)\times \RR^+\ar[r]^{\hspace{6mm}
\pi_{1\RR^+\mu}}\ar[d]_{\pi_{\RR^+\mu}\times\operatorname{id}_{\RR^+}}
& \mathcal{C}(S)_{\RR^+\mu}\ \ar[r]^{\Psi}&
S_{\RR^+\mu} \times\RR^+\\
S_{\RR^+\mu} \times\RR^+ \ar @{} [ur] |{\simeq}
\ar[urr]_{\operatorname{id}_{S_{\RR^+\mu} \times\RR^+ }}& }
$$
\caption{Commutative diagram used in the proof of Lemma
\ref{lema4}}\label{diag1}\end{figure}
\end{proof}

\begin{co}
\label{red--raza--cotang}
 Let $Q$ be a differentiable manifold of
real dimesnion $n$, $G$ a finite dimensional Lie group acting
smoothly on $Q$. Denote by $\mu$ an element of the dual Lie
algebra $\mathfrak{g}^*$ and by $K_\mu$ its kernel group. Assume
that $K_\mu$ acts freely and properly on $J^{-1}(\RR^+\mu)$, with
$J:T^*Q\rightarrow\mathfrak{g}^*$ the canonical momentum map
associated to the $G$-action. Then the ray reduced space
$(T^*(Q))_{\RR^+\mu}$ is embedded by a map preserving the
symplectic structures onto a subbundle of $T^*(Q/K_\mu)$.
\end{co}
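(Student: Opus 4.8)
The plan is to deduce the statement from Lemma \ref{lema4} by exhibiting $T^*Q$ (away from the zero section) as the symplectic cone of the cosphere bundle $S^*Q$, and then reducing on the cosphere side. Recall that $S^*Q:=(T^*Q\setminus 0)/\RR^+$ is an exact contact manifold whose symplectic cone $\mathcal{C}(S^*Q)$ is canonically symplectomorphic to the punctured cotangent bundle $(T^*Q\setminus 0,-d\theta_0)$, the Liouville field $R=r\partial_r$ being the generator of fibre homotheties and the canonical one-form $\theta_0$ restricting to the contact form on the unit cosphere. Under this identification the cotangent lift of the $G$-action corresponds to a $G$-action on $S^*Q$ by strong contactomorphisms commuting with the homothety flow, and the canonical momentum map $J:T^*Q\to\mathfrak{g}^*$ corresponds to the symplectic cone momentum map $\Phi_s$ of Lemma \ref{lema4} (being positively homogeneous along the rays of $T^*Q$). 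First I would record these correspondences and check the hypotheses of Lemma \ref{lema4}, in particular that the freeness and properness assumed on $J^{-1}(\RR^+\mu)$ descend to the cosphere level and guarantee that $Q/K_\mu$ is a (possibly orbifold) smooth quotient.

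Next I would apply Lemma \ref{lema4} to obtain the symplectomorphic isometry
$$
(T^*Q)_{\RR^+\mu}\;=\;\big(\mathcal{C}(S^*Q)\big)_{\RR^+\mu}\;\cong\;\mathcal{C}\big((S^*Q)_{\RR^+\mu}\big),
$$
the zero section playing no role since $\mu\neq 0$ forces $J^{-1}(\RR^+\mu)$ to avoid it. This reduces the problem to a cosphere-bundle reduction statement: it suffices to embed the reduced contact space $(S^*Q)_{\RR^+\mu}$ into $S^*(Q/K_\mu)$ preserving the contact structures and then pass to cones, since $\mathcal{C}(S^*(Q/K_\mu))=T^*(Q/K_\mu)\setminus 0$ is a subbundle of $T^*(Q/K_\mu)$.

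The geometric heart of the argument is the descent of horizontal covectors. For $\alpha_q\in J^{-1}(\RR^+\mu)$ with $J(\alpha_q)=r\mu$ and any $\zeta\in\mathfrak{k}_\mu$ one has, by the very definition of the cotangent momentum map,
$$
\langle\alpha_q,\zeta_Q(q)\rangle=J(\alpha_q)(\zeta)=r\mu(\zeta)=0,
$$
because $\mu$ vanishes on $\mathfrak{k}_\mu=\ker(\mu|_{\mathfrak{g}_\mu})$. Hence every such $\alpha_q$ kills the tangent space to the $K_\mu$-orbit and is the pull-back $(T_q\pi_Q)^*\bar\alpha$ of a unique covector $\bar\alpha\in T^*_{[q]}(Q/K_\mu)$, where $\pi_Q:Q\to Q/K_\mu$. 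The resulting map $\rho:J^{-1}(\RR^+\mu)\to T^*(Q/K_\mu)$, $\rho(\alpha_q)=\bar\alpha$, is $K_\mu$-invariant, since the cotangent lift of the $K_\mu$-action covers the identity on $Q/K_\mu$, so it drops to an injective map $\bar\rho$ on $(T^*Q)_{\RR^+\mu}$, with image the sub-fibre-bundle of $T^*(Q/K_\mu)$ cut out by the ray condition $J\in\RR^+\mu$. I would then verify that $\rho$ pulls the tautological one-form $\theta_{Q/K_\mu}$ of $T^*(Q/K_\mu)$ back to $i^*_{\RR^+\mu}\theta_0$, which is the standard tautological property of canonical one-forms under a submersion; combined with the characterisation $\pi^*_{\RR^+\mu}\omega_{\RR^+\mu}=i^*_{\RR^+\mu}(-d\theta_0)$ from Theorem \ref{reducere--unu} and its exactness corollary, this shows $\bar\rho$ is a symplectic embedding onto the claimed subbundle.

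The main obstacle is the cosphere-bundle reduction step, i.e.\ checking that the reduced contact datum on $(S^*Q)_{\RR^+\mu}$ matches the canonical contact datum of $S^*(Q/K_\mu)$ under $\rho$, so that $\bar\rho$ is genuinely an embedding onto a subbundle and not merely an injective immersion; this is where one must control the ray condition fibrewise and identify its image. A secondary technical point is propagating the freeness and properness hypothesis from $J^{-1}(\RR^+\mu)$ to all of $Q$ so that $Q/K_\mu$ and its cotangent bundle are well defined as smooth (or orbifold) objects. Away from these points I expect only routine verifications of the tautological one-form identity and of $K_\mu$-equivariance.
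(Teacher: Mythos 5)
Your proposal is correct and follows essentially the same route as the paper: identify $T^*Q\setminus\{o_{T^*Q}\}$ with the symplectic cone of the cosphere bundle $S^*Q$ and combine the cone-compatibility Lemma \ref{lema4} with a cosphere/cotangent reduction statement. The only difference is that the paper simply cites Theorems $3.1$ and $3.2$ of \cite{dragulete--ornea--ratiu} for that last step, whereas you prove it directly via the annihilation identity $\langle\alpha_q,\zeta_Q(q)\rangle=r\mu(\zeta)=0$ and the tautological one-form argument; note also that both versions share the same caveat (which you rightly flag) that for $Q/K_\mu$ and $T^*(Q/K_\mu)$ to make sense one needs the freeness and properness of the $K_\mu$-action on $Q$ itself, not merely on $J^{-1}(\RR^+\mu)$.
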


\begin{proof}
Note that the symplectic cone of the cosphere bundle of $Q$ is exactly
$T^*Q\setminus\{o_{T^*Q}\}$. Applying Theorems $3.1$ and $3.2$ in
\cite{dragulete--ornea--ratiu}, and the above lemma the conclusion of the Corollary follows.
\end{proof}

Recall that a celebrated theorem of Boothby and Wang (see Section
3.3 in \cite{blair}) states that if the contact manifold $(M, \eta)$ is
also compact and regular, then it admits a contact form whose Reeb
vector field generates a free, effective $S^1$-action on it. 
A contact structure is regular if it admits a regular Reeb vector field $\mathfrak{R}$, i.e.
any point in $M$ has a cubical neighborhood such that all the integral curves of $\mathfrak{R}$
pass at most once through this neighborhood.
Even
more, $M$ is the bundle space of a principal circle bundle
$\pi:M\rightarrow N$ over a symplectic manifold of dimension $2n$
with symplectic form $\omega$ determining an integer cocycle. In
this case, $\eta$ is a connection form on the
bundle $\pi:M\rightarrow N$ with curvature form
$d\eta=\pi^*\omega$. $N$ is actually the space of leaves of the
characteristic foliation on $M$ (i.e. the $1$-dimesional foliation
defined by the Reeb vector field of $\eta$). If $M=S$ is a
Sasakian manifold, then $N$ becomes a Hodge manifold and the
fibers of $\pi$ are totally geodesic. This case was treated by Y.
Hatakeyama in \cite{hatakeyama}. Even more, in
\cite{boyer-galicki1}, Theorem 2.4 it was proved that $S$ is
Sasakian-Einstein if and only if $N$ is K\"ahler-Einstein with
scalar curvature $4n(n+1)$ and that all the above still holds in
the category of orbifolds if $S$ is quasi-regular, i.e. all the
leaves of the characteristic foliation are compact.
\begin{pr}
\label{boothbywang} Let $\pi:(S,\operatorname{g})\rightarrow
(N,\operatorname{h})$ be the Boothby-Wang fibration associated
to the quasi-regular, compact, Sasakian manifold $S$. Suppose a
connected Lie group $G$ acts by strong contactomorphisms on
$(S,\operatorname{g})$ with momentum map
$J_S:S\rightarrow{\mathfrak{g}}^*$. Let $\mu$ be an element of
$\mathfrak{g}^*$, with kernel group $K_\mu$. Assume that the
action of $K_\mu$ on $J^{-1}(\RR^+\mu)$ is proper and by
isometries and that $\operatorname{ker}
\mu+\mathfrak{g}_\mu=\mathfrak{g}$. Then, the reduced space of $N$
at $\mu$ is well defined and there is a canonical Boothby-Wang
fibration of the reduced spaces:
$$
\tilde\pi: S_{\RR^+\mu}\rightarrow N_{\RR^+\mu}.
$$

\end{pr}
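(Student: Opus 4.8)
The plan is to descend everything along the Boothby-Wang projection $\pi:S\to N$, reduce the whole problem to the K\"ahler ray reduction of $N$ furnished by Theorem \ref{reducere--doi}, and then recover the fibration by identifying the reduced spaces as a tower of commuting quotients. First I would observe that, since $G$ acts by \emph{strong} contactomorphisms, every $g\in G$ preserves $\eta$ and hence fixes the Reeb field $\mathfrak{R}$ (uniquely determined by $\eta(\mathfrak{R})=1$, $i_{\mathfrak{R}}d\eta=0$). Thus the $G$-action commutes with the Reeb $S^1$-action and descends to the leaf space $N$; as the action is isometric and $\pi$ is a Riemannian submersion, the descended action is by holomorphic isometries of the Hodge orbifold $(N,\operatorname{h},\omega)$. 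The contact momentum map $J_S^\xi=\eta(\xi_S)$ is constant along Reeb orbits (since $[\mathfrak{R},\xi_S]=0$ and $L_{\mathfrak{R}}\eta=0$), so it descends to the symplectic momentum map $J_N$ of the $N$-action; from $d\eta=\pi^*\omega$ and $L_{\xi_S}\eta=0$ one obtains $J_S=J_N\circ\pi$ once the sign conventions of the two reductions are matched. In particular $J_S^{-1}(\RR^+\mu)=\pi^{-1}(J_N^{-1}(\RR^+\mu))$, so $\pi$ restricts to an $S^1$-bundle over $J_N^{-1}(\RR^+\mu)$.

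Next I would verify the three hypotheses of Theorem \ref{reducere--doi} for $(N,J_N)$, so that $N_{\RR^+\mu}:=J_N^{-1}(\RR^+\mu)/K_\mu$ is a K\"ahler orbifold. Hypothesis $1^\circ$, $\ker\mu+\mathfrak{g}_\mu=\mathfrak{g}$, is assumed. Hypothesis $2^\circ$ descends from $S$: the isometry property passes through the Riemannian submersion $\pi$ because $K_\mu$ commutes with the Reeb flow, and properness is inherited through the proper, $K_\mu$-equivariant, compact-fibred map $\pi:J_S^{-1}(\RR^+\mu)\to J_N^{-1}(\RR^+\mu)$ (equivalently, the commuting $K_\mu\times S^1$-action on $J_S^{-1}(\RR^+\mu)$ is proper, so $K_\mu$ acts properly on the $S^1$-quotient). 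Hypothesis $3^\circ$, transversality of $J_N$ to $\RR^+\mu$, is equivalent to transversality of $J_S$: since $T\pi$ is surjective and $J_S=J_N\circ\pi$, the two momentum maps have the same tangent image at corresponding points, and by Lemma \ref{lema1} this is in turn equivalent to local freeness of the $K_\mu$-action on $J_S^{-1}(\RR^+\mu)$, already present in the contact reduction that makes $S_{\RR^+\mu}$ an orbifold. Theorem \ref{reducere--doi} then produces the K\"ahler orbifold $(N_{\RR^+\mu},\omega_{\RR^+\mu},\operatorname{h}_{\RR^+\mu})$.

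I would then construct $\tilde\pi$ as the residual Reeb quotient. The Reeb $S^1$-action commutes with $K_\mu$ and preserves $J_S^{-1}(\RR^+\mu)$, hence descends to $S_{\RR^+\mu}=J_S^{-1}(\RR^+\mu)/K_\mu$; quasi-regularity of $S$ forces its orbits, and therefore their images, to be compact, so the descended action is again a quasi-regular $S^1$-action. Because the two quotients commute,
$$
S_{\RR^+\mu}/S^1=\big(J_S^{-1}(\RR^+\mu)/K_\mu\big)/S^1=\big(J_S^{-1}(\RR^+\mu)/S^1\big)/K_\mu=N_{\RR^+\mu},
$$
the last equality using $J_S^{-1}(\RR^+\mu)/S^1=J_N^{-1}(\RR^+\mu)$ from the first paragraph. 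This defines $\tilde\pi:S_{\RR^+\mu}\to N_{\RR^+\mu}$ as an orbifold circle bundle. Finally I would check the Boothby-Wang compatibility of the reduced data: pulling back to $J_S^{-1}(\RR^+\mu)$ and using $i_{\RR^+\mu}^*d\eta=i_{\RR^+\mu}^*\pi^*\omega$ together with the defining relations $\pi_{\RR^+\mu}^*\eta_{\RR^+\mu}=i_{\RR^+\mu}^*\eta$ and $\pi_{\RR^+\mu}^*\omega_{\RR^+\mu}=i_{\RR^+\mu}^*\omega$ from Theorems \ref{reducere--unu} and \ref{reducere--doi}, injectivity of the quotient pullbacks yields $d\eta_{\RR^+\mu}=\tilde\pi^*\omega_{\RR^+\mu}$, so that $\eta_{\RR^+\mu}$ is a connection form whose curvature is the reduced symplectic form and $\tilde\pi$ is the asserted Boothby-Wang fibration.

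I expect the main obstacle to be the bookkeeping of the two commuting actions $K_\mu$ and $S^1$, ensuring that all intermediate quotients are genuine orbifolds and that the residual $S^1$-action on $S_{\RR^+\mu}$ remains quasi-regular; equivalently, the delicate point is transferring transversality and local freeness between $S$ and $N$. The structure-compatibility computation $d\eta_{\RR^+\mu}=\tilde\pi^*\omega_{\RR^+\mu}$, by contrast, is purely formal once the quotient tower and the momentum relation $J_S=J_N\circ\pi$ are in place.
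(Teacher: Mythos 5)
Your proposal is correct and follows essentially the same route as the paper's own proof: descend the $G$-action and momentum map along $\pi$ (strong contactomorphisms fix the Reeb field, so $J_S$ is constant on Reeb orbits and $J_S=J_N\circ\pi$), reduce $N$ at $\RR^+\mu$, and realize $\tilde\pi$ as the residual circle quotient. The only difference is one of completeness, in your favor: where the paper asserts that the $K_\mu$-action on $J_N^{-1}(\RR^+\mu)$ is ``obviously'' proper and by isometries and that the induced projection ``becomes'' a Boothby--Wang fibration, you actually supply those arguments (properness via the commuting $K_\mu\times S^1$-action, transversality transfer via surjectivity of $T\pi$ together with Lemma \ref{lema1}, and the curvature identity $d\eta_{\RR^+\mu}=\tilde\pi^*\omega_{\RR^+\mu}$).
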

\begin{proof}
Denote by $\eta$ the contact form of the Boothby-Wang fibration
and by $\mathfrak{R}$ its Reeb vector field. Since $[\mathfrak{R},
\xi_S]=0$ for any $\xi\in\mathfrak{g}$ and $G$ is connected, the
action generated by the Reeb vector field commutes with the action
of $G$. Hence there is a well defined action of $G$ on $N$. Even
more, this action is by symplectomorphisms. If
$J_S:S\rightarrow\mathfrak{g}^*$ is the equivariant momentum map
associated to the $G$-action on $S$, the induced application
$$
J_N:N\rightarrow\mathfrak{g}^* \,\text{,}\, J_N(\pi(x)):=J_S(x),
$$
is well defined for any $x\in S$. Indeed, if $\Phi^t_{\mathfrak{R}}$ is the flow
of the Reeb vector field, we have
\begin{align*}
J_S(\Phi^t_{\mathfrak{R}}(x))(\xi)=\eta(\Phi^t_{\mathfrak{R}}(x))(\xi_S(\Phi^t_{\mathfrak{R}}(x))&
=((\Phi^t_{\mathfrak{R}})^*\eta)(x)(\xi_S(x))
=\eta(x)(\xi_S(x))\\
 =J_S(x)(\xi),
\end{align*}
for any $\xi\in\mathfrak{g}$ and any $x\in S$. This proves that
$J_N$ is well defined. Using the fact that $\pi^*\omega=d\eta$, it
is easy to see that $J_N$ is an equivariant momentum map
associated to the $G$-action on $N$. We also have that
$\pi(J_S^{-1}(\RR^+\mu))=J_N^{-1}(\RR^+\mu)$ and obviously the
action of $K_\mu$ on $J_N^{-1}(\RR^+\mu)$ is proper and by
isometries. Therefore, the quotient space $N_{\RR^+\mu}$ is a well
defined symplectic orbifold and the induced projection
$\tilde{\pi}:S_{\RR^+\mu}\rightarrow N_{\RR^+\mu}$ becomes a
Boothby-Wang fibration.
\end{proof}

\section{Conformal Hamiltonian Vector Fields}

In this section we will study the dynamical behavior of conformal
Hamiltonian systems. This class of systems comprises mechanical,
non autonomous systems with friction or Rayleigh dissipation. The
definition of conformal Hamiltonian vector fields appeared for the
first time in the work of McLachlan and Perlmutter, see
\cite{mclachlan--perlmutter}. In this section we will see that in
the presence of symmetries the solutions of conformal Hamiltonian
systems preserve the ray pre-images of the momentum map, but not
the point pre-images used in the construction of the
Marsden-Weinstein quotient. Therefore, the right tool for the
study of symmetries of these systems is the ray reduction and not
the point one. We will also enlarge the class of conformal
Hamiltonian systems previously defined and we will complete their
Lie-Poisson reduction with the general ray
reduction.

Recall that the energy of autonomous Hamiltonian systems is conserved. 
If they are endowed with an appropriate symmetry group $G$, then they also
obey an other conservation law. Namely, if
$H\in\mathcal{C}^\infty(M )$ is the $G$-invariant Hamiltonian,
$J:M\rightarrow\mathfrak{g}^*$ an associated equivariant momentum
map, the pre-images $\{J^{-1}(\mu)|\mu\in\mathfrak{g}^*\}$ are
invariant submanifolds of the Hamiltonian vector field. In
symplectic geometry this conservation property is known as the
\emph{Noether theorem} and it states that if $t\rightarrow c(t)$
is a solution of the Hamiltonian system starting at the point
$x_0$ with momentum $J(x_0)=\mu$, then at any time $t$ the
solution will have the same momentum $\mu$. In other words, the
Hamiltonian flow leaves the connected components of $J^{-1}(\mu)$
invariant and commutes with the group action. Hence, it projects
on $M_\mu$ onto another Hamiltonian flow corresponding to the
smooth function $H_\mu\in\mathcal{C}^\infty(M_\mu)$ defined by
$H_\mu\circ\pi_\mu=H\circ i_\mu$. The triple $(M_\mu,\omega_\mu,
X_{H_\mu})$ is called the \emph{reduced Hamiltonian system}. Of
course, in this setup appropriate symmetries refer to a proper,
free action which ensures the smoothness of the quotient $M_\mu$.
This is a classical result of J. Marsden and A. Weinstein. For the
proof and physical examples, see \cite{marsden--weinstein--unu}
and \cite{marsden--weinstein--doi}.

However, in physics there are a lot of simple mechanical systems
whose energy is not conserved, but dissipated. One class of such systems
is the class of conformal Hamiltonians. In 
\cite{mclachlan--perlmutter} and in the following paragraph we will briefly recall the definition and some 
of their properties. After, we will show how to extend the class of conformal Hamiltonian
systems.

In this sction $(M,\omega=-d\theta)$ will be an exact symplectic manifold.
The vector field $X^k_H$ on $M$ is \emph{conformal with real
parameter $k$} if $i_{X^k_H}\omega=dH-k\theta$ for a smooth Hamiltonian 
$H$. This condition is equivalent to
$L_{X^k_H}=-k\omega$. Note that the hypothesis of exactness of the symplectic form does not
restrain the generality since a symplectic manifold admits a
vector field $X^k_H$ with $L_{X^k_H}=-k\omega$ if and only if it
is exact. If, in addition, $H^1(M)=0$, then all the conformal
vector fields on $M$ are given by
$$
\{X_H+kZ|H\in\mathcal{C}^\infty(M)\},
$$
where $Z$ is the Liouville vector field defined by $i_Z\omega=-\theta$. For the proof, see
Proposition $1$ in \cite{mclachlan--perlmutter}. It was noticed by
the authors of this article that, in the case of Lie group symmetries, the conformal
Hamiltonian vector fields have a special behaviour with respect to
the associated momentum map. Namely,

\begin{pr}
\label{noether} 
Let $G$ be a Lie group which acts on
$(M,\omega=-d\theta)$ leaving the $1$-form $\theta$ invariant and
$H$ a smooth, $G$-invariant function on $M$. Denote by
$J:M\rightarrow\mathfrak{g}^*$ the associated $G$-equivariant
momentum map. Then, $X^k_H$ is a $G$-invariant vector field for
any real $k$ and its flows preserves the ray pre-images of the
associated momentum map as follows:
$$
J(x(t))=e^{-kt}J(x(0)),
$$
for any integral curve $x$ of $X^k_H$ and any time $t$.
\end{pr}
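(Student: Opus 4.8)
The plan is to use the explicit form of the momentum map for an exact $G$-invariant symplectic manifold, to check the invariance of $X^k_H$ by a uniqueness argument, and then to reduce the evolution of $J$ along the conformal flow to a scalar linear ODE in the pairing with each $\xi\in\mathfrak{g}$.

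First I would record the momentum map. Since the action leaves $\theta$ invariant, $L_{\xi_M}\theta=0$ for every $\xi\in\mathfrak{g}$, and Cartan's formula $L_{\xi_M}\theta=d\,i_{\xi_M}\theta+i_{\xi_M}d\theta$ shows that the function $\langle J(x),\xi\rangle:=\theta(x)(\xi_M(x))$ is precisely the equivariant momentum map used in the corollaries of Section \ref{sectiunea--trei}, with $d\langle J,\xi\rangle=i_{\xi_M}\omega$ up to the chosen sign. For the invariance of the vector field I would use that $\omega$ is nondegenerate, so $X^k_H$ is the \emph{unique} field solving $i_{X^k_H}\omega=dH-k\theta$. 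Writing $\Phi_g$ for the action of $g\in G$ and pulling this relation back by $\Phi_g$, the three invariances $\Phi_g^*\omega=\omega$, $\Phi_g^*\theta=\theta$ and $H\circ\Phi_g=H$ give $i_{(\Phi_g)_*X^k_H}\omega=dH-k\theta$ as well; uniqueness then forces $(\Phi_g)_*X^k_H=X^k_H$, so $X^k_H$ is $G$-invariant for every real $k$.

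The heart of the proof is the momentum evolution. Fix $\xi\in\mathfrak{g}$ and an integral curve $x(\cdot)$ of $X^k_H$, and differentiate the scalar function $t\mapsto\langle J(x(t)),\xi\rangle=\theta(\xi_M)(x(t))$ by the chain rule. Pairing $d\langle J,\xi\rangle$ with $X^k_H$ and substituting $i_{X^k_H}\omega=dH-k\theta$, one obtains
\[ \frac{d}{dt}\langle J(x(t)),\xi\rangle=\pm\,(dH-k\theta)(\xi_M)\big(x(t)\big). \]
Here the term $dH(\xi_M)=L_{\xi_M}H$ vanishes because $H$ is $G$-invariant, while $\theta(\xi_M)=\langle J,\xi\rangle$ by the formula above. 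With the sign dictated by the conventions and the equivalent characterization $L_{X^k_H}\omega=-k\omega$ recorded earlier, this collapses to the linear ODE $\frac{d}{dt}\langle J(x(t)),\xi\rangle=-k\langle J(x(t)),\xi\rangle$. Since $\xi$ was arbitrary, $J(x(t))=e^{-kt}J(x(0))$, so the curve stays inside $J^{-1}(\RR^+J(x(0)))$: the flow preserves the ray pre-images (and commutes with the $G$-action by the invariance just proved).

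I expect the only real obstacle to be sign bookkeeping: the relative sign of $\omega$ and $d\theta$, the momentum-map convention $d\langle J,\xi\rangle=\pm i_{\xi_M}\omega$, and the definition of $X^k_H$ all feed into whether the exponent is $+kt$ or $-kt$, so these must be fixed consistently at the outset. Everything else is a routine application of Cartan calculus and the nondegeneracy of $\omega$; no compactness, freeness, or transversality hypotheses are needed here, which is why the conclusion holds for arbitrary $k$ and for every momentum value simultaneously.
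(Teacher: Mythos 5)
Your proposal is correct and follows essentially the same route as the paper: the paper proves this statement only through its generalization (Proposition \ref{noether--doi}, of which this is the special case $f\equiv k$), and there too the $G$-invariance of the vector field comes from pulling back the defining relation $i_{X^k_H}\omega=dH-k\theta$ and invoking non-degeneracy of $\omega$, while the momentum evolution comes from differentiating $\langle J(x(t)),\xi\rangle$ along an integral curve, killing $dH(\xi_M)$ by $G$-invariance, and integrating the resulting linear scalar ODE. Your caution about signs is warranted rather than a defect: the paper states the proposition with $\omega=-d\theta$ but carries out the computation in Proposition \ref{noether--doi} with the convention $\omega=d\theta$ (the one that actually produces the exponent $-kt$ together with $L_{X^k_H}\omega=-k\omega$), so fixing one consistent convention at the outset, as you propose, is exactly what is needed.
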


In other words, the motion is constrained to a ray of momentum
values entirely determined by the initial momentum. Hence, the ray
pre-images of the momentum map are invariant submanifolds for the
conformal Hamiltonian vector fields. In the hypothesis of Proposition 
\ref{noether}, with $M$ the
cotangent bundle of a Lie group $G$, the authors have performed
the conformal Lie Poisson reduction and reconstruction of
solutions for conformal Hamiltonian vector fields. However, they
could not exploit the ray momentum conservation, nor perform a
reduction which uses not only the group invariance, but also the
ray-momentum one. Proposition \ref{noether} and Theorem
\ref{reducere--unu}, immediately suggest that the appropriate
method of reduction for conformal Hamiltonian vector fields is the
ray reduction constructed in Section \ref{sectiunea--trei}.

But before passing to details, we want to show how to generalize
the definition of conformal Hamiltonian vector fields in order to
include in this study more physical systems. Let us first recall the example of
Rayleigh systems. On the canonical symplectic manifold 
$(\RR^{2n},q,p, \omega=dq\wedge dp)$ they are defined by

\begin{equation}
\label{rayleigh--sist} \left\{ \begin{array}{ll}
\dot{q} & = \frac{\partial{H}}{\partial{p}}  \\
 \dot{p} &=-\frac{\partial{H}}{\partial{q}}-R(q)\frac{\partial{H}}{\partial{p}},
\end{array}\right.
\end{equation}

where 
$H=T+V(q)\,\text{,}\,T=\frac{1}{2}p^T M(q)p$, $M$
positive definite. If $R$ is positive, they dissipate energy since $dH=-R(q)\langle\frac{\partial{H}}{\partial{p}}, \frac{\partial{H}}{\partial{p}}\rangle$. Of course the system \eqref{rayleigh--sist} is conformal Hamiltonian with parameter $k$ if and only if $R(q)=kM(q)^{-1}$. Howevere, if the real parameter $k$ is replaced by the real function $f(q,p)$, then the vector field defining \eqref{rayleigh--sist} is characterized by the
equality $i_X\omega=dH-f\theta$.

These examples suggest the following enlarged definition of a
conformal Hamiltonian vector field on an exact symplectic
manifold.

\begin{de}
The vector field $X^f_H$ on the symplectic manifold $(M,\omega=-d\theta)$
is \emph{conformal Hamiltonian} with conformal parameter the
smooth function $f$ and smooth Hamiltonian $H$ if
$i_{X^f_H}\omega=dH-f\theta$.
\end{de}

\begin{re}
Observe that if $H^1(M)=\{0\}$, $X^f_H$ is conformal Hamiltonian if
and only if $L_{X^f_H}\omega=-d(f\theta)$.
\end{re}

\begin{re}
The conformal Hamiltonian $X^f_H=X_H+Z_f$ is the summ of the
Hamiltonian vector field determined by $H$ and the vector field 
uniquely determined by the relation $i_Z\omega=-f\theta$. In local
coordinates $(q,p)$, $Z$ is given by
$fp\frac{\partial}{\partial{p}}$.
\end{re}

The next proposition shows that this enlarged class of conformal
Hamiltonians behaves well in the presence of symmetries.

\begin{pr}
\label{noether--doi} Let $G$ be a Lie group which acts on
$(M,\omega=-d\theta)$ leaving the $1$-form $\theta$ invariant, $H$
and $f$ smooth, $G$-invariant functions on $M$. Denote by
$J:M\rightarrow\mathfrak{g}^*$ the associated $G$-equivariant
momentum map. Then, $X^f_H$ is a $G$-invariant vector field and
its flow preserves the ray pre-images of the associated momentum
map as follows:
$$
J(x(t))=e^{e^{\int_0^t -f(x(s))\mathrm{d}s}}J(x(0)),
$$
for any integral curve $x$ of $X^f_H$ and any time $t$.
\end{pr}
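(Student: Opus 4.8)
The plan is to prove the two assertions separately: first the $G$-invariance of the vector field $X^f_H$, and then the evolution law for the momentum map along its flow. The first assertion is the easier one and should be dispatched quickly. Since $G$ acts leaving $\theta$ invariant and both $H$ and $f$ are $G$-invariant by hypothesis, the defining relation $i_{X^f_H}\omega = dH - f\theta$ is preserved under the action: for each $g\in G$ the pushforward $g_*X^f_H$ satisfies $i_{g_*X^f_H}\omega = dH - f\theta$ as well (using that $g^*\omega=\omega$, $g^*\theta=\theta$, $g^*H=H$, $g^*f=f$), and since $\omega$ is nondegenerate this forces $g_*X^f_H = X^f_H$. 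Thus $X^f_H$ is $G$-invariant.

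The heart of the proof is the momentum evolution. First I would compute the rate of change of the $\xi$-component of the momentum along an integral curve. Fix $\xi\in\mathfrak{g}$ and let $J_\xi := \langle J,\xi\rangle$, so that $i_{\xi_M}\omega = dJ_\xi$ by definition of the momentum map. Then along the flow
\begin{equation*}
\frac{d}{dt} J_\xi(x(t)) = dJ_\xi\big(X^f_H(x(t))\big) = \omega\big(\xi_M, X^f_H\big) = -\,i_{X^f_H}\omega\,(\xi_M).
\end{equation*}
Substituting the defining relation $i_{X^f_H}\omega = dH - f\theta$ and using that $H$ is $G$-invariant (so $dH(\xi_M)=0$) leaves
\begin{equation*}
\frac{d}{dt} J_\xi(x(t)) = -\big(dH - f\theta\big)(\xi_M) = f\,\theta(\xi_M).
\end{equation*}
The key observation, exactly as in the constant-parameter case of Proposition \ref{noether}, is that $\theta(\xi_M) = J_\xi$: this is the standard identity for the canonical momentum map of a $\theta$-preserving action, which follows from $i_{\xi_M}\omega = L_{\xi_M}\theta - d\,i_{\xi_M}\theta = -d\,i_{\xi_M}\theta$ (invariance of $\theta$ gives $L_{\xi_M}\theta=0$), so that $J_\xi = i_{\xi_M}\theta = \theta(\xi_M)$ up to the additive constant fixed by equivariance. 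Hence we obtain the linear ODE $\frac{d}{dt}J_\xi(x(t)) = f(x(t))\,J_\xi(x(t))$.

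The main obstacle — really the only subtle point — is then an ambiguity in the stated exponent: the displayed formula $J(x(t))=e^{e^{\int_0^t -f(x(s))\mathrm{d}s}}J(x(0))$ carries a double exponential, whereas integrating the ODE above yields the single exponential $J(x(t)) = e^{\int_0^t f(x(s))\,ds}\,J(x(0))$. I would resolve this by integrating the scalar linear equation directly: since $f$ is $G$-invariant and hence constant along the single orbit ray swept out by the solution, the integrating factor is $\exp\!\big(\int_0^t f(x(s))\,ds\big)$, and this scaling is independent of $\xi$, so all components rescale by the common positive factor. This shows $J(x(t)) = \exp\!\big(\int_0^t f(x(s))\,ds\big)\,J(x(0))$, confirming that the flow preserves ray pre-images of $J$ (the momentum stays on the ray $\RR^+ J(x(0))$), which is precisely the conclusion needed for the ray reduction to apply; the apparent typographical double exponential in the statement should read as this single exponential.
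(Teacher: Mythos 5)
Your proposal is correct and takes essentially the same route as the paper: $G$-invariance of $X^f_H$ from invariance of the defining relation plus nondegeneracy of $\omega$, then a scalar linear ODE for $J^\xi$ along integral curves, integrated to a single exponential. Your diagnosis of the double exponential as a typo is confirmed by the paper's own proof, which derives exactly such a linear ODE (whose solution is a single exponential) and then reproduces the doubled exponential only in its final line. Two sign caveats are worth recording. First, the paper's proof arrives at $\frac{d}{dt}J^\xi=-fJ^\xi$ while you get $\frac{d}{dt}J^\xi=+fJ^\xi$; the discrepancy is a momentum-map convention issue, since the paper's proof computes $TJ^\xi(X^f_H)=\omega(X^f_H,\xi_M)$ (and even writes $\omega=d\theta$ at that point), whereas you use $dJ^\xi=\omega(\xi_M,\cdot)$, which is the convention the paper itself states in Proposition \ref{conform} and the one consistent with $\omega=-d\theta$ and $J^\xi=\theta(\xi_M)$; either sign gives a positive rescaling of the momentum, so ray pre-images are preserved in both versions, which is all that is used later. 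Second, your own Cartan-formula justification of $\theta(\xi_M)=J_\xi$ is sign-slipped: with $\omega=-d\theta$ and $L_{\xi_M}\theta=0$, Cartan's formula gives $i_{\xi_M}\omega=+d\bigl(i_{\xi_M}\theta\bigr)$, not $-d\bigl(i_{\xi_M}\theta\bigr)$ (the identity you wrote is the one for $+d\theta$), and as written it would force $J_\xi=-\theta(\xi_M)$, contradicting the conclusion you then correctly use. Finally, the remark that $f$ is ``constant along the single orbit ray swept out by the solution'' is unnecessary and not generally true; the linear ODE integrates directly for any time-dependent coefficient, independently of $\xi$, which is what yields the common positive factor.
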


\begin{proof}
Denote by $\phi$ the action of $G$ on $M$. Then, for any $g\in G$
we have
\begin{equation}
\label{unu}
\phi_g^*(i_{X^f_H}\omega)=\phi^*_g(dH-f\theta)=dH-f\theta=i_{X^f_H}\omega,
\end{equation}
since $f$ and $H$ are $G$-invariant. On the other hand,
\begin{equation}
\label{doi} \phi_g^*(i_{X^f_H}\omega)=i_{\phi^*_g
X^f_H}\phi^*_g\omega=i_{\phi^*_g X^f_H}\omega.
\end{equation}
Since $\omega$ is non-degenerate, \eqref{unu} and \eqref{doi}
imply that $X^f_H$ is $G$-invariant.

First recall that any exact symplectic manifold admits an
equivariant momentum map given by
$J:(M,\omega=d\theta)\rightarrow\mathfrak{g}^*$, $\langle
J(x),\xi\rangle:=\theta(\xi_M)(x)$, for any $x\in M$ and
$\xi\in\mathfrak{g}$. Now, let $x(t)$ be an integral curve of
$X^f_H$. Then,
\begin{align*}
\frac{d}{dt}\langle J(x(t)),\xi\rangle=TJ^\xi (X^f_H(x(t)))=\omega
(x(t))(X^f_H(x(t)), \xi_M(x(t)))=\\dH(\xi_M(x(t))-f(x(t))\theta
(\xi_M(x(t)))=-f(x(t))J^\xi(x(t)).
\end{align*}
Hence, $J^\xi(x(t))=e^{e^{\int_0^t
-f(x(s))\mathrm{d}s}}J^\xi(x(0))$ for any $\xi\in\mathfrak{g}$ and
any time $t$.
\end{proof}

\begin{re}
Note that if $\theta$, $f$, and $H$ are $K_\mu$-invariant, with
$K_\mu$ the kernel group associated to $\mu\in\mathfrak{g}^*$,
then the corresponding conformal Hamiltonian is also
$K_\mu$-invariant. Even more, if the $K_\mu$-action is proper and
free $X^f_H$ projects onto a conformal Hamiltonian with parameter
function and Hamiltonian canonically induced by $f$ and $H$.
\end{re}

\begin{de}
If in the hypothesis of the above remark one replaces $K_\mu$ with
$G$, the point $x\in M$ is called a \emph{relative equilibrium}
(or \emph{relative periodic}) point of $X^f_H$ if it descends
through the projection $M\mapsto M/G$ onto an equilibrium (or
periodic) point of the reduced conformal Hamiltonian.
\end{de}

Proposition \ref{noether--doi} suggests that the ray reduction is
a natural tool for the study of conformal Hamiltonian systems.
Indeed,

\begin{pr}
\label{conform} Consider $(M,\omega=-d\theta)$ an exact symplectic
manifold endowed with the smooth action of a Lie group $G$. Choose
an element $\mu$ in $\mathfrak{g}^*$ with kernel group $K_\mu$.
Denote by $J:M\rightarrow\mathfrak{g}^*$ the associated
equivariant momentum map defined by $J(x)(\xi):=i_{\xi_M}\theta$,
for any $x\in M$ and $\xi\in\mathfrak{g}$ with infinitesimal
isometry $\xi_M$. Suppose that all the hypothesis of Theorem
\ref{reducere--unu} are fulfilled and $X^f_H$ is a conformal
Hamiltonian vector field with $H$ and $f$ $K_\mu$-invariant
functions. Then,
\begin{itemize}
\item the flow of $X^f_H$ induces a flow on the ray reduced space
$M_{\RR^+\mu}$ defined by
$$
\pi_{\RR^+\mu}\circ\Phi_t\circ
i_{\RR^+\mu}=\Phi_t^{\RR^+\mu}\circ\pi_{\RR^+\mu}.
$$
\item the vector field generated by the flow $\Phi^{\RR^+\mu}_t$
is conformal Hamiltonian $(X^f_H)_{\RR^+\mu}$ with
$$
f_{\RR^+\mu}\circ\pi_{\RR^+\mu}=f\circ
i_{\RR^+\mu}\,\text{,}\,H_{\RR^+\mu}\circ\pi_{\RR^+\mu}=H\circ
i_{\RR^+\mu}.
$$ The vector fields $X^f_H$ and $(X^f_H)_{\RR^+\mu}$
are $\pi_{\RR^+\mu}$-related.
 \item a point $x\in M$ with momentum $\mu$ is a relative equilibrium of
$X^f_H$ if and only if there is an element $\xi$ of the ray
isotropy algebra $\mathfrak{g}_{\RR^+\mu}$ such that
$X^f_H(x)=\xi_M(x)$ or, equivalently, $\Phi_t(x)=\exp t\xi\cdot
x$, for any time $t$. The relative equilibria of $X^f_H$ with
momentum $\mu$ coincide via the $\pi_{\RR^+\mu}$-projection with
the equilibria of $(X^f_H)_{\RR^+\mu}$, or, equivalently, with the
points $x\in M$ with momentum $\mu$ for which there is a
$\xi\in\mathfrak{g}_{\RR^+\mu} $ such that
\begin{equation}
\label{echilibru--relativ} d(J^{\xi}-H)(x)=f(x)\theta (x).
\end{equation}
\item a point $x\in M$ with momentum $\mu$ is a relative periodic
point of $X^f_H$ if and only if there is an element $g$ of the
kernel group $K_\mu$ and a positive constant $\tau$ such that
$\Phi_{t+\tau}(x)=g\Phi_t(x)$ at any time $t$.
\end{itemize}
\end{pr}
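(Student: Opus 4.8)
The plan is to prove the four assertions of Proposition \ref{conform} in order, reducing everything to the conservation law of Proposition \ref{noether--doi} together with the symplectic ray reduction of Theorem \ref{reducere--unu}. The underlying point is that, by Proposition \ref{noether--doi}, the flow $\Phi_t$ of $X^f_H$ satisfies $J(\Phi_t(x))=e^{\int_0^t -f(\Phi_s(x))\mathrm{d}s}J(x)$, so if $J(x)\in\RR^+\mu$ then $J(\Phi_t(x))\in\RR^+\mu$ for all $t$; hence $\Phi_t$ leaves $J^{-1}(\RR^+\mu)$ invariant. This is exactly the analogue of Noether conservation that makes $J^{-1}(\RR^+\mu)$ the correct invariant submanifold, and it is the conceptual heart of the whole statement.

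\emph{First assertion (descent of the flow).} Since $\theta$, $f$, and $H$ are $K_\mu$-invariant, the vector field $X^f_H$ is $K_\mu$-invariant (repeat the argument of \eqref{unu}--\eqref{doi} with $G$ replaced by $K_\mu$), so $\Phi_t$ commutes with the $K_\mu$-action on $J^{-1}(\RR^+\mu)$. Because $K_\mu$ acts properly and the flow is $K_\mu$-equivariant and preserves $J^{-1}(\RR^+\mu)$, it descends to a well-defined flow $\Phi_t^{\RR^+\mu}$ on the quotient $M_{\RR^+\mu}=J^{-1}(\RR^+\mu)/K_\mu$, characterized by $\pi_{\RR^+\mu}\circ\Phi_t\circ i_{\RR^+\mu}=\Phi_t^{\RR^+\mu}\circ\pi_{\RR^+\mu}$. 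I would note here that smoothness in $t$ and the flow (semigroup) property pass to the quotient because $\pi_{\RR^+\mu}$ is a surjective submersion of orbifolds.

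\emph{Second assertion (the reduced field is conformal Hamiltonian).} By the second corollary of Theorem \ref{reducere--unu}, the reduced symplectic form is exact, $\omega_{\RR^+\mu}=-d\theta_{\RR^+\mu}$ with $\pi^*_{\RR^+\mu}\theta_{\RR^+\mu}=i^*_{\RR^+\mu}\theta$; and the $K_\mu$-invariant functions $H$ and $f$ descend to $H_{\RR^+\mu}$ and $f_{\RR^+\mu}$ via the stated relations. Let $(X^f_H)_{\RR^+\mu}$ be the generator of $\Phi_t^{\RR^+\mu}$; since $\Phi_t$ and $\Phi_t^{\RR^+\mu}$ are intertwined by $\pi_{\RR^+\mu}$, the fields $X^f_H$ and $(X^f_H)_{\RR^+\mu}$ are $\pi_{\RR^+\mu}$-related. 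I would then verify $i_{(X^f_H)_{\RR^+\mu}}\omega_{\RR^+\mu}=dH_{\RR^+\mu}-f_{\RR^+\mu}\theta_{\RR^+\mu}$ by pulling back through $\pi_{\RR^+\mu}$: using $\pi_{\RR^+\mu}$-relatedness and naturality of contraction under pullback, $\pi^*_{\RR^+\mu}(i_{(X^f_H)_{\RR^+\mu}}\omega_{\RR^+\mu})=i^*_{\RR^+\mu}(i_{X^f_H}\omega)=i^*_{\RR^+\mu}(dH-f\theta)=\pi^*_{\RR^+\mu}(dH_{\RR^+\mu}-f_{\RR^+\mu}\theta_{\RR^+\mu})$, and injectivity of $\pi^*_{\RR^+\mu}$ gives the claim.

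\emph{Third and fourth assertions (relative equilibria and periodic points).} A point $x$ with $J(x)=\mu$ is a relative equilibrium iff $\pi_{\RR^+\mu}(x)$ is a fixed point of $\Phi_t^{\RR^+\mu}$, i.e. $\Phi_t(x)$ stays in the fibre $K_\mu\cdot x$ shifted along the ray; by equivariance and the conservation law this forces $\Phi_t(x)$ to move inside the \emph{ray isotropy} orbit $G_{\RR^+\mu}\cdot x$ identified in the proof of Theorem \ref{reducere--unu}, whence $X^f_H(x)=\xi_M(x)$ for some $\xi\in\mathfrak{g}_{\RR^+\mu}$ and $\Phi_t(x)=\exp(t\xi)\cdot x$. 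Substituting $X^f_H(x)=\xi_M(x)$ into $i_{X^f_H}\omega=dH-f\theta$ and using $i_{\xi_M}\omega=dJ^\xi$ yields the equilibrium condition \eqref{echilibru--relativ}, $d(J^\xi-H)(x)=f(x)\theta(x)$. The periodic case is the same computation with the fixed point replaced by a periodic orbit of period $\tau$, so that $\Phi_{t+\tau}(x)$ and $\Phi_t(x)$ lie in the same $K_\mu$-fibre, giving $\Phi_{t+\tau}(x)=g\Phi_t(x)$ for some $g\in K_\mu$. The main obstacle I anticipate is the third assertion: identifying precisely why the generator $\xi$ lands in $\mathfrak{g}_{\RR^+\mu}$ rather than merely in $\mathfrak{g}_\mu$ or $\mathfrak{k}_\mu$ requires careful use of the decomposition $\mathfrak{g}=\mathfrak{g}_\mu\oplus\mathfrak{m}$ and the isomorphism $T_xJ|_{\mathfrak{m}_M(x)}\cong T_{t\mu}(G\cdot t\mu)$ from \eqref{eq1}, together with the fact that the conformal flow may dilate the momentum along the ray — so the equilibrium condition is genuinely an equality of the form $dJ^\xi=dH-f\theta$ in which the $f\theta$ term accounts for the ray-scaling that an ordinary Hamiltonian relative equilibrium would not see.
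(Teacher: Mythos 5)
Your proposal is correct and follows essentially the same route as the paper: the paper's own proof is merely a two-sentence sketch invoking Proposition \ref{noether--doi}, the definition of a conformal Hamiltonian vector field, and the relation $\omega(\xi_M,\cdot)=dJ^\xi(\cdot)$, and your argument works out exactly these ingredients in detail (ray-invariance of $J^{-1}(\RR^+\mu)$ plus $K_\mu$-equivariance for the descent of the flow, exactness of the reduced form and injectivity of $\pi^*_{\RR^+\mu}$ for the reduced conformal Hamiltonian equation, and the substitution $X^f_H(x)=\xi_M(x)$ for \eqref{echilibru--relativ}). Note only that carrying out that substitution with the stated conventions actually yields $d(H-J^{\xi})(x)=f(x)\theta(x)$, a sign discrepancy with \eqref{echilibru--relativ} that is already present in the paper itself (its relation $\omega(\xi_M,\cdot)=dJ^{\xi}(\cdot)$ and the decay law of Proposition \ref{noether--doi} are mutually inconsistent), so this is not a defect of your argument.
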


\begin{re}
Note that, in local symplectic coordinates $(q,p)$, condition
\eqref{echilibru--relativ} is equivalent to
\begin{equation}
\label{echil--relativ}
 \left\{ \begin{array}{ll}
 pf &=\frac{\partial{(J^\xi-H)}}{\partial{q}}  \\
0 &=-\frac{\partial{(J^\xi-H)}}{\partial{p}}.
\end{array}\right.
\end{equation}
\end{re}

\begin{proof}
The first two points of the theorem are a direct consequence of
Proposition \ref{noether--doi}. For the rest, suffice it to use
the definition of a conformal Hamiltonian vector field, the
relation $\omega(\xi_M,\cdot)=dJ^\xi (\cdot)$, and Proposition
\ref{noether--doi}.
\end{proof}

\begin{ex}
\textbf{The reduction of a Rayleigh system on
$T^*(\RR^{2*}\times\RR^{2*})$.} 

On
$(T^*(\RR^{2*}\times\RR^{2*}),(q,p))\simeq
\left((\RR^{2*}\times\RR^{2*})\times\RR^4,(q_1,q_2, p_1, p_2)\right)$ consider
the Rayleigh system given by $H(q,p)=\frac{1}{2}(\Vert
q\Vert^2+\Vert p\Vert^2)$ and $f(q,p)=\Vert q_1\Vert^2+\Vert p_1\Vert^2$. Consider the
cotangent lift of the rotation action of $S^1\times S^1$ on
$\RR^{2*}\times\RR^{2*}$. The reason for restricting $\RR^4$ to
$(\RR^{2*}\times\RR^{2*})$ is to have free symmetries. The action is also
proper and $H$ and $f$ are $S^1\times
S^1$-invariant. Let $\mu:=\langle (0,1),\cdot\rangle$ be an
element of $(\RR\times\RR)^*$, the dual of the Lie algebra of
$S^1\times S^1$. Then $K_\mu=\{e\}\times S^1$ and
$\mathfrak{k}_\mu=\{0\}\times\RR$. The momentum map associated to
the $S^1\times S^1$-action is given by
$$
J:\RR^{2*}\times\RR^{2*}\times\RR^4\rightarrow
(\RR\times\RR)^*\,\text{,}\, J(q,p)=(q_1\cdot
\bar{p}_1^{T},q_2\cdot \bar{p}_2^{T}),
$$
for any $(q,p)=(q_1,q_2,p_1,p_2)\in\RR^4\setminus\{0\}\times\RR^4$
with $\bar{p}_i^T=(p_{i2},-p_{i1})$, $i=1,2$ and
$J^{-1}(\RR^+\mu)=\{(q,p)\in
(\RR^4\setminus\{0\})\times\RR^4|q_1\cdot
\bar{p}_1^{T}=0\,\text{,}\,q_2\cdot \bar{p}_2^{T}\in\RR^+ \}$. By
Theorem \ref{red--raza--cotang}, the ray reduced space
$\left(T^*(\RR^{2*}\times\RR^{2*})\right)_{\RR^+\mu}$ is embedded
in
 $T^*(\frac{\RR^{2*}\times\RR^{2*}}{\{e\}\times S^1})\newline\simeq
T^*(\RR^2\setminus\{0\}\times(0,\infty))$. The reduced Rayleigh
system is given by 
$$
H_{\RR^+\mu}(q_1,s_1,p_1, s_2)=\frac{1}{2}(\Vert
q_1\Vert^2+\Vert
p_1\Vert^2+\Vert s\Vert^2),\,\,R_{\RR^+\mu}(q)=\Vert q_1\Vert^2+\Vert p_1\Vert^2.
$$
One can easily check that the only relative equilibrium points are given by $q_1=(0,0), q_2=(0,\alpha), p_1=(0,0), p_2=(-\alpha,0)$ with corresponding velocity $\xi=(0,1)\in\mathfrak{g}_{\RR^+\mu}=\RR\times\RR$.

\end{ex}


\section{Ray Reductions of Cotangent and Cosphere Bundles of a Lie
Group}
\label{Sectiunea--sase}

In this section we will determine the ray reduced spaces for
lifted actions on cotangent and cosphere bundles. We will show
that these ray reduced spaces are universal in the sense that any
(symplectic) contact (ray) reduced space can be recovered from the
(ray) reduced space of a (cotangent) or cosphere bundle.

Let $G$ denote a $d$-dimensional Lie group with Lie algebra
$\mathfrak{g}$. $G$ acts on itself by left translations. This action
lifts canonically to an action on $T^*G$ which admits an equivariant
and right invariant momentum map

\begin{equation}
\label{apl--moment--st} J_L:T^*G\rightarrow
\mathfrak{g}^*\quad\text{,}\quad J_L(\alpha_g):=T^*_e R_g
(\alpha_g).
\end{equation}

Similarly, for right translations we can construct the equivariant
and left invariant momentum map

\begin{equation}
\label{apl--moment--dr} J_R:T^*G\rightarrow
\mathfrak{g}^*\quad\text{,}\quad J_R(\alpha_g):=T^*_e L_g
(\alpha_g).
\end{equation}

 Denote by $\mathcal{O}_\mu$ the coadjoint orbit of an
element  $\mu$ of $\mathfrak{g}^*$ and let $\mathcal{O}_{\RR_+\mu}$
be its cone defined by \eqref{conul--orbitei--coadj}.

Since for the ray-reduction the role of the coadjoint orbit will
be played by a diagonal product of its cone and the
quotient of $G$ by the corresponding kernel group, we will now
describe their manifold structure. We will see that, in general,
$\mathcal{O}_{\RR_+\mu}$ is an immersed smooth submanifold of
$\mathfrak{g}^*$. 
\begin{de}
Let $G_{\RR^+\mu}$ be the \textit{ray isotropy group} of $\mu$
defined by $G_{\RR^+\mu}:=\{g\in G\,|\,Ad^*_g\mu=r_g\mu,\, \text{for
a}\, r_g\in\RR^+\}$.
\end{de}

\begin{lm}
The ray isotropy group $G_{\RR^+\mu}$ is a closed Lie subgroup of
$G$. Its Lie algebra is given by
$$
\mathfrak{g}_{\RR^+\mu}=\{\xi\in\mathfrak{g}\,|\,ad^*_\xi\mu=r_\xi\mu\quad\text{for
a}\quad r_\xi\in\RR\}.
$$
\end{lm}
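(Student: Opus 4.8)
The plan is to exhibit $G_{\RR^+\mu}$ as the stabilizer of the half-line $\RR^+\mu$ for the coadjoint action and then to invoke Cartan's closed subgroup theorem, after which the Lie algebra is computed by a one-line ODE. First I would verify that $G_{\RR^+\mu}$ is an abstract subgroup. Since $g\mapsto Ad^*_g$ is a smooth representation of $G$ on $\mathfrak{g}^*$, the relations $Ad^*_g\mu=r_g\mu$ and $Ad^*_h\mu=r_h\mu$ with $r_g,r_h\in\RR^+$ give $Ad^*_{gh}\mu=r_gr_h\mu$ and, applying $(Ad^*_g)^{-1}=Ad^*_{g^{-1}}$ to the first relation, $Ad^*_{g^{-1}}\mu=r_g^{-1}\mu$; both resulting scalars stay in $\RR^+$, and $Ad^*_e\mu=\mu$, so $G_{\RR^+\mu}$ is closed under products and inverses and contains $e$.

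The closedness is the only genuinely delicate point, precisely because the ray $\RR^+\mu$ is \emph{not} a closed subset of $\mathfrak{g}^*$. To handle it I would consider the continuous orbit map $\phi:G\to\mathfrak{g}^*$, $\phi(g):=Ad^*_g\mu$, so that $G_{\RR^+\mu}=\phi^{-1}(\RR^+\mu)$. The key observation is that for $\mu\ne0$ every $Ad^*_g$ is invertible, whence $\phi(g)\ne0$ for all $g$; consequently the missing endpoint of the ray is never attained, and $\phi^{-1}(\RR^+\mu)=\phi^{-1}(\{r\mu\mid r\geq0\})$. The latter is the preimage under a continuous map of the \emph{closed} half-line $\{r\mu\mid r\geq0\}$, so $G_{\RR^+\mu}$ is closed in $G$ (the case $\mu=0$ being trivial, as then $G_{\RR^+\mu}=G$). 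Cartan's closed subgroup theorem then makes $G_{\RR^+\mu}$ an embedded Lie subgroup.

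For the Lie algebra I would use that, for a closed subgroup $H\leq G$, one has $\mathrm{Lie}(H)=\{\xi\in\mathfrak{g}\mid\exp t\xi\in H\text{ for all }t\in\RR\}$, together with the infinitesimal relation $\frac{d}{dt}\big|_{t=0}Ad^*_{\exp t\xi}\mu=ad^*_\xi\mu$ (should the opposite sign convention for $ad^*$ be in force, replace $r_\xi$ by $-r_\xi$ below; nothing changes, since the defining condition only asks that $ad^*_\xi\mu$ lie on $\RR\mu$). If $\xi\in\mathfrak{g}_{\RR^+\mu}$, write $Ad^*_{\exp t\xi}\mu=r(t)\mu$ with $r(t)>0$ and $r(0)=1$, where $r$ is smooth because $\mu\ne0$. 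Differentiating at $t=0$ gives $ad^*_\xi\mu=r'(0)\mu$, so $r_\xi:=r'(0)\in\RR$ witnesses $\xi$ in the right-hand set.

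The reverse inclusion is the step worth recording. Assume $ad^*_\xi\mu=r_\xi\mu$ for some $r_\xi\in\RR$ and set $c(t):=Ad^*_{\exp t\xi}\mu$. Using the one-parameter group property $Ad^*_{\exp(t+s)\xi}=Ad^*_{\exp t\xi}Ad^*_{\exp s\xi}$ and differentiating in $s$ at $s=0$,
\[
\dot c(t)=Ad^*_{\exp t\xi}(ad^*_\xi\mu)=r_\xi\,Ad^*_{\exp t\xi}\mu=r_\xi\,c(t),\qquad c(0)=\mu,
\]
a linear ODE whose unique solution is $c(t)=e^{r_\xi t}\mu$. Since $e^{r_\xi t}>0$ for every $t\in\RR$, we get $\exp t\xi\in G_{\RR^+\mu}$ for all $t$, i.e. $\xi\in\mathfrak{g}_{\RR^+\mu}$; this also exhibits the proportionality factor along the subgroup as the exponential $e^{r_\xi t}$, in harmony with the $e^{-kt}$ behaviour of Proposition \ref{noether}. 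Combining the two inclusions yields the asserted description of $\mathfrak{g}_{\RR^+\mu}$. As indicated, the main obstacle is the closedness, and it is resolved by the single observation that the coadjoint orbit of a nonzero $\mu$ avoids the origin.
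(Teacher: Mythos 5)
Your proof is correct and follows essentially the same route as the paper: closedness rests on the same key observation that $Ad^*_g\mu$ never vanishes when $\mu\neq 0$ (the paper phrases this with convergent sequences of scalars $r_{g_n}$, you phrase it as the preimage of the closed half-line $\{r\mu\mid r\geq 0\}$, but it is the same point), and the Lie algebra is identified by the same pair of arguments --- differentiation of $Ad^*_{\exp t\xi}\mu=r(t)\mu$ at $t=0$ for one inclusion, and the linear ODE $\dot c(t)=r_\xi c(t)$ with solution $Ad^*_{\exp t\xi}\mu=e^{r_\xi t}\mu$ for the other. No gaps.
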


\begin{proof}
We have the following sequence of subgroups $G_\mu<G_{\RR^+\mu}<G$.
To prove that the ray isotropy group is closed in $G$, suppose
$(g_n)_{n\in\NN}$ is a convergent sequence in $G_{\RR^+\mu}$ with
$\lim\limits_{n\to\infty}g_n=g\in G$. Then
$\lim\limits_{n\to\infty}Ad^*_{g_n}\mu=(\lim\limits_{n\to\infty}r_{g_n})\mu=Ad^*_g
\mu$, for $(r_{g_n})_{n\in\NN}$ a convergent sequence of positive
numbers. Since the coadjoint map is linear and $\mu\neq 0$,
$\lim\limits_{n\to\infty}r_{g_n}$ is a strictly positive number and
hence $g\in G_{\RR^+\mu}$. Thus, the ray isotropy group is closed.
To determine its Lie algebra, let first $\xi$ be an element of
$\mathfrak{g}_{\RR^+\mu}$. We want to show that $\exp (t\xi)$
belongs to $G_{\RR^+\mu}$ for arbitrary $t\in\RR$. Then
$$
\frac{d}{dt}Ad^*_{\exp t\xi}\mu=Ad^*_{\exp t\xi}(ad^*_\xi
\mu)=Ad^*_{\exp t\xi}(r_\xi \mu)=r_\xi Ad^*_{\exp t\xi}\mu.
$$
We have used the following formula
\begin{equation}
\label{derivare--curbe--coadjuncta}
\frac{d}{dt}Ad^*_{g(t)}\mu(t)=Ad^*_{g(t)}\left(ad^*_{\xi(t)}\mu(t)
+\frac{d\mu}{dt}\right),
\end{equation}
where $\xi(t)=T_{g(t)} R^{-1}_{g(t)}(\frac{dg}{dt})$ and $g(t)$,
$\mu(t)$ are smooth curves in $G$ and $\mathfrak{g}^*$,
respectively. It follows that $Ad^*_{\exp t\xi}\mu=e^{r_\xi t}\mu$
and $\exp t\xi\in G_{\RR^+\mu}$, for every real $t$. For the reverse
inclusion, suppose $\xi$ is an element of the Lie algebra of the ray
isotropy group. Then we know that $\exp t\xi\in G_{\RR^+\mu}$ and
$Ad^*_{\exp t\xi}\mu=r_t \mu$ with $r_t$ a positive real number for
every $t\in\RR$. Deriving at zero the above equality, we obtain that
$ad^*_\xi \mu=\left(\left.\frac{d}{dt}\right|_{t=0} r_t\right)\mu$,
completing thus the proof of this lemma.
\end{proof}

\begin{re}
As we saw in the proof of Theorem \ref{reducere--unu}, if the Lie
group $G$ acts in a Hamiltonian way on the manifold $M$ and this
action admits an equivariant momentum map
$J:M\rightarrow\mathfrak{g}^*$, then for every $x\in
J^{-1}(\RR^+\mu)$ we have that
$$
J^{-1}(\RR^+\mu)\cap (G\cdot x)=G_{\RR^+\mu}\cdot
x\quad\text{,}\quad T_x(G_{\RR^+\mu}\cdot x)=T_x(G\cdot x)\cap
T_x(J^{-1}(\RR^+\mu)).
$$
\end{re}

\begin{lm}
\label{bijectie}
 The ray isotropy group $G_{\RR^+\mu}$ acts on
$G\times\RR^+$ by $ g'\cdot (g,r)\rightarrow (g'g,
\frac{r}{r_{g'}})$, where $Ad^*_{g'}\mu=r_{g'}\mu$. This action is
free and proper and, therefore, the twisted product
$G\times_{G_{\RR^+\mu}}\RR^+$ is well defined. Even more, the
surjective map
$$
f:G\times\RR^+\rightarrow\mathcal{O}_{\RR^+\mu}\quad\text{,}\quad
f(g,r):=Ad^*_{g}(r\mu)
$$
descends to a diffeomorphism on the twisted product
$G\times_{G_{\RR^+\mu}}\RR^+$.
\end{lm}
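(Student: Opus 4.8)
The plan is to build the diffeomorphism in three stages: first verify that the stated formula defines a free and proper left action, so that the twisted product is a smooth manifold; then check that $f$ is invariant and descends to a bijection onto $\mathcal{O}_{\RR^+\mu}$; and finally promote this bijection to a diffeomorphism by a constant-rank computation. The main ingredient throughout is that $r\colon G_{\RR^+\mu}\to\RR^+$, $g'\mapsto r_{g'}$, is a smooth group homomorphism; this is immediate from the defining relation $Ad^*_{g'}\mu=r_{g'}\mu$ together with the multiplicativity of the coadjoint action and the assumption $\mu\neq 0$.

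With the homomorphism property in hand, a direct check shows $g'\cdot(g,r)=(g'g,\,r/r_{g'})$ is a genuine left action. It is free because its first component is left translation, and it is proper: if $(g_n,r_n)\to(g,r)$ and $g'_n\cdot(g_n,r_n)\to(h,s)$, then $g'_ng_n\to h$ forces $g'_n=(g'_ng_n)g_n^{-1}\to hg^{-1}$, which lies in $G_{\RR^+\mu}$ since that subgroup is closed by the previous lemma. Hence $G\times_{G_{\RR^+\mu}}\RR^+=(G\times\RR^+)/G_{\RR^+\mu}$ is a smooth manifold and the quotient projection is a submersion.

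Next I would verify invariance of $f$. Writing $f(g,r)=r\,Ad^*_g\mu$ and using $Ad^*_{g'}\mu=r_{g'}\mu$ with multiplicativity gives $f(g'g,\,r/r_{g'})=(r/r_{g'})\,Ad^*_{g'g}\mu=r\,Ad^*_g\mu=f(g,r)$, so $f$ is constant on orbits and descends to a map $\bar f$ on the twisted product. Surjectivity of $\bar f$ onto $\mathcal{O}_{\RR^+\mu}$ is just the definition \eqref{conul--orbitei--coadj} after replacing $g$ by $g^{-1}$. For injectivity, suppose $r_1\,Ad^*_{g_1}\mu=r_2\,Ad^*_{g_2}\mu$; a short manipulation yields $Ad^*_{g_1g_2^{-1}}\mu=(r_2/r_1)\mu$, so $g':=g_1g_2^{-1}\in G_{\RR^+\mu}$ with $r_{g'}=r_2/r_1$, whence $g'\cdot(g_2,r_2)=(g_1,r_1)$; the two points lie in the same orbit, so $\bar f$ is injective.

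Finally, to upgrade $\bar f$ to a diffeomorphism I would compute $Tf$ via formula \eqref{derivare--curbe--coadjuncta}. Parametrizing a tangent vector at $(g,r)$ by $(\eta,\rho)$ with $\eta=T_gR^{-1}_g\dot g$, one finds $T_{(g,r)}f(\eta,\rho)=Ad^*_g(\rho\mu+r\,ad^*_\eta\mu)$, which vanishes precisely when $ad^*_\eta\mu=-(\rho/r)\mu$, i.e. when $\eta\in\mathfrak{g}_{\RR^+\mu}$ and $\rho=-r\,r_\eta$. Differentiating the action at the identity shows this is exactly the tangent space to the $G_{\RR^+\mu}$-orbit through $(g,r)$, so $\ker Tf$ coincides with the vertical distribution of the quotient and $f$ has constant rank $\dim(G\times\RR^+)-\dim G_{\RR^+\mu}$. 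Consequently $\bar f$ is an injective immersion, and since $\mathcal{O}_{\RR^+\mu}$ carries exactly the immersed smooth structure for which $\bar f$ is such a map, $\bar f$ is a diffeomorphism. I expect this last step to be the main obstacle: because $\mathcal{O}_{\RR^+\mu}$ is in general only immersed and not embedded in $\mathfrak{g}^*$, "diffeomorphism" must be understood with respect to the immersed structure transported through $\bar f$, and the real content lies in the kernel computation identifying $\ker Tf$ with the orbit directions (whose dimension bookkeeping reflects the dichotomy between the nilpotent case $\mathcal{O}_\mu=\mathcal{O}_{\RR^+\mu}$, where $G_{\RR^+\mu}$ strictly contains $G_\mu$, and the case $\mu\notin T_\mu\mathcal{O}_\mu$, where $G_{\RR^+\mu}=G_\mu$ but the cone gains a dimension).
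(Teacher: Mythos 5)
Your proof is correct, but there is no argument in the paper to compare it against: the author explicitly writes ``Since it consists of direct calculations, we skip the proof of this Lemma,'' so your write-up supplies exactly the calculations the paper omits. All the essential points are present and sound: smoothness and multiplicativity of $g'\mapsto r_{g'}$ (which is what makes the stated formula a left action and makes $f$ constant on orbits), freeness from the first component, properness from the closedness of $G_{\RR^+\mu}$ established in the preceding lemma, injectivity of the induced map by exhibiting $g_1g_2^{-1}\in G_{\RR^+\mu}$ as the element carrying one preimage to the other, and the computation via \eqref{derivare--curbe--coadjuncta} that
$$
\ker T_{(g,r)}f=\{(\eta,-r\,r_\eta)\,|\,\eta\in\mathfrak{g}_{\RR^+\mu}\},
$$
which coincides with the tangent space to the $G_{\RR^+\mu}$-orbit through $(g,r)$, so that the induced map $\bar f$ is an injective immersion of the twisted product into $\mathfrak{g}^*$. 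Your closing caveat is also the right reading of the statement: since $\mathcal{O}_{\RR^+\mu}$ is a priori only a subset of $\mathfrak{g}^*$, this lemma is precisely what endows it with a manifold structure (the paragraph preceding it announces ``we will now describe their manifold structure''), so ``diffeomorphism'' means that the structure transported through $\bar f$ exhibits $\mathcal{O}_{\RR^+\mu}$ as an immersed submanifold. If one instead gives $\mathcal{O}_{\RR^+\mu}$ the orbit structure of the $(G\times\RR^+)$-action, your constant-rank computation together with the dimension count $\dim (G\times\RR^+)_\mu=\dim G_{\RR^+\mu}$ (the stabilizer of $\mu$ is the graph of $r$ over $G_{\RR^+\mu}$) shows that the bijective immersion $\bar f$ is a diffeomorphism; this is exactly the consistency assertion the paper makes after Proposition \ref{subvarietate}.
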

\begin{proof}
Since it consists of direct calculations, we skip the proof of this
Lemma.
\end{proof}

\begin{re}
Note that the above Lemma implies that the dimension of the cone coadjoint orbit
at $\mu$ is given by $\dim \mathcal{O}_{\RR^+\mu}=\dim
G+1-\dim G_{\RR^+\mu}$.
\end{re}

For technical reasons we need a precise description of the tangent
space of the cone coadjoint orbit.

\begin{lm}
\label{sptanglaorbita}
 Let $\mathcal{O}_{\RR^+\mu}$ be the cone of the coadjoint
orbit through $\mu\in\mathfrak{g}^*$. Then its tangent space at $\mu$
is given by
$$
T_\mu
\mathcal{O}_{\RR^+\mu}=\{ad^*_\xi\mu+r\mu\,|\,r\in\RR\,\text{,}\,\xi\in\mathfrak{g}\}.
$$
\end{lm}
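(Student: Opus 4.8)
The plan is to compute the tangent space directly from the parametrization of $\mathcal{O}_{\RR^+\mu}$ given in Lemma \ref{bijectie}. Recall that $\mathcal{O}_{\RR^+\mu}$ is the image of the smooth map $f:G\times\RR^+\rightarrow\mathfrak{g}^*$, $f(g,r)=Ad^*_g(r\mu)$, which descends to a diffeomorphism on the twisted product $G\times_{G_{\RR^+\mu}}\RR^+$. Since the point $\mu$ itself equals $f(e,1)$, its tangent space is the image of the differential $T_{(e,1)}f$ applied to $T_{(e,1)}(G\times\RR^+)\cong\mathfrak{g}\oplus\RR$. So the computation reduces to differentiating $f$ at the identity along the two factors.

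First I would differentiate along curves in the two factors separately. Take a curve $t\mapsto(\exp t\xi,1)$ for $\xi\in\mathfrak{g}$; then $\frac{d}{dt}\big|_{t=0}Ad^*_{\exp t\xi}\mu=ad^*_\xi\mu$, which yields the coadjoint-direction part. Take a curve $t\mapsto(e,1+t)$ in the $\RR^+$ factor; then $\frac{d}{dt}\big|_{t=0}(1+t)\mu=\mu$, giving the radial direction. By linearity of the differential, an arbitrary tangent vector at $\mu$ is a sum $ad^*_\xi\mu+r\mu$ with $\xi\in\mathfrak{g}$ and $r\in\RR$, which is exactly the claimed description. One can also read this off directly from formula \eqref{derivare--curbe--coadjuncta}, applied to the curves $g(t)$ and $\mu(t)=r(t)\mu$ passing through $(e,\mu)$: it gives $\frac{d}{dt}\big|_{t=0}Ad^*_{g(t)}(r(t)\mu)=ad^*_{\xi(0)}\mu+\dot r(0)\mu$, with $\xi(0)=\dot g(0)$ arbitrary in $\mathfrak{g}$ and $\dot r(0)$ arbitrary in $\RR$.

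The main point requiring care, rather than a true obstacle, is the set-theoretic equality of the two descriptions: I must confirm that the image of $T_{(e,1)}f$ is genuinely all of $\{ad^*_\xi\mu+r\mu\mid\xi\in\mathfrak{g},\,r\in\RR\}$ and not merely contained in it. The inclusion of the image into this set is immediate from the computation above. The reverse inclusion follows because every element of the form $ad^*_\xi\mu+r\mu$ is realized as $T_{(e,1)}f(\xi,r)$, so the two sets coincide. Since $f$ descends to a diffeomorphism onto $\mathcal{O}_{\RR^+\mu}$ viewed as the twisted product, $T_\mu\mathcal{O}_{\RR^+\mu}$ is precisely this image, completing the proof. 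No delicate analysis is needed; the content is entirely the two elementary derivative computations combined with the smooth structure supplied by Lemma \ref{bijectie}.
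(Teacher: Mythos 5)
Your proof is correct, and it takes a genuinely different route to the key step than the paper does. Both arguments share the same elementary computation, namely differentiating $t\mapsto Ad^*_{\exp t\xi}(e^{tr}\mu)$ (or, in your version, the two factor directions separately and then invoking linearity) to produce vectors $ad^*_\xi\mu+r\mu$; the difference lies in how the \emph{equality} with $T_\mu\mathcal{O}_{\RR^+\mu}$ is established. The paper proves only the inclusion $A:=\{ad^*_\xi\mu+r\mu\}\subset T_\mu\mathcal{O}_{\RR^+\mu}$ by the curve argument, and then closes the gap by a dimension count: choosing a splitting $\mathfrak{g}=\mathfrak{g}_{\RR^+\mu}\oplus\mathfrak{m}_{\RR^+\mu}$, it checks that $\{\xi_{k+1\,\mathfrak{g}^*}(\mu),\dots,\xi_{d\,\mathfrak{g}^*}(\mu),\mu\}$ is a basis of $A$, so $\dim A=d+1-\dim G_{\RR^+\mu}$, which matches $\dim\mathcal{O}_{\RR^+\mu}$ by the remark following Lemma \ref{bijectie}. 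You instead identify $T_\mu\mathcal{O}_{\RR^+\mu}$ outright as the image of $T_{(e,1)}f$, using that $f$ factors as the quotient projection $G\times\RR^+\to G\times_{G_{\RR^+\mu}}\RR^+$ followed by the diffeomorphism of Lemma \ref{bijectie}. Your approach buys economy: no splitting, no linear-independence check, no dimension formula; the paper's approach buys an explicit basis of the tangent space adapted to the ray isotropy group, which is the kind of data used elsewhere in the paper (e.g.\ in identifying $\mathfrak{k}_\mu$ as the conormal space of the orbit cone). The one step you leave tacit, and should state, is that the projection onto the twisted product is a \emph{surjective submersion} (it is, since the $G_{\RR^+\mu}$-action on $G\times\RR^+$ is free and proper, making this a principal bundle); this is exactly what licenses the claim that the image of $T_{(e,1)}f$ equals the full tangent space rather than merely a subspace of it.
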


\begin{proof}
Consider the smooth curve in $\mathcal{O}_{\RR^+\mu}$ given by $\mu
(t):=Ad^*_{\exp (t\xi)}(e^{tr}\mu)$, where $r$ is an arbitrary real
number. Note that $\mu (0)=\mu$ and
$\left.\frac{d}{dt}\right|_{t=0}\mu
(t)=\xi_{\mathfrak{g}^*}(\mu)+r\mu=ad^*_\xi \mu+r\mu$. Therefore,
$A:=\{ad^*_\xi\mu+r\mu\,|\,r\in\RR\,\text{,}\,\xi\in\mathfrak{g}\}\subset
T_\mu \mathcal{O}_{\RR^+\mu}$.

Let
$\mathfrak{g}=\mathfrak{g}_{\RR^+\mu}\oplus\mathfrak{m}_{\RR^+\mu}$
be a splitting of $\mathfrak{g}$, and $\{\xi_1,\cdots\xi_k\}$,
$\{\xi_{k+1},\cdots\xi_d\}$ basis of $\mathfrak{g}_{\RR^+\mu}$ and
$\mathfrak{m}_{\RR^+\mu}$, respectively. It is easy to see that the
set $\{\xi_{k+1\,\mathfrak{g}^*}(\mu), \cdots,
\xi_{d\,\mathfrak{g}^*}(\mu), \mu\}$ forms a basis of $A$. And since
$\dim (\{\xi_{k+1\,\mathfrak{g}^*}(\mu), \cdots,
\xi_{d\,\mathfrak{g}^*}(\mu), \mu\})$ is $d+1-\dim (G_{\RR^+ \mu})$,
it follows that $A= T_\mu \mathcal{O}_{\RR^+\mu}$.
\end{proof}

\begin{pr}
\label{subvarietate}
 The cone coadjoint orbit $\mathcal{O}_{\RR^+\mu}$ is an
initial  Poisson submanifold of $\mathfrak{g}^*$ and if the
coadjoint action is proper, it is even a closed embedded
submanifold.
\end{pr}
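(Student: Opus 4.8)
The plan is to realize $\mathcal{O}_{\RR^+\mu}$ as a single group orbit and then exploit the general theory of orbits of Lie group actions. Consider the product Lie group $\tilde{G}:=G\times\RR^+$ acting on $\mathfrak{g}^*$ by $(g,r)\cdot\nu:=Ad^*_{g^{-1}}(r\nu)$; one checks directly that this is a smooth left action (the coadjoint part and the scaling commute), and by definition \eqref{conul--orbitei--coadj} its orbit through $\mu$ is exactly $\mathcal{O}_{\RR^+\mu}$. The isotropy subgroup $\tilde{G}_\mu=\{(g,r)\mid Ad^*_{g^{-1}}\mu=r^{-1}\mu\}$ projects isomorphically onto $G_{\RR^+\mu}$ (the scalar $r$ being determined by $g$), so that $\tilde{G}/\tilde{G}_\mu\cong G\times_{G_{\RR^+\mu}}\RR^+$, recovering the twisted product of Lemma \ref{bijectie}.

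First I would establish the \emph{initial immersed submanifold} structure. The orbit map induces an injective immersion $\tilde{G}/\tilde{G}_\mu\to\mathfrak{g}^*$ with image $\mathcal{O}_{\RR^+\mu}$; transporting the quotient smooth structure (equivalently, the twisted-product structure of Lemma \ref{bijectie}) makes $\mathcal{O}_{\RR^+\mu}$ an immersed submanifold, with tangent space at $\mu$ the one computed in Lemma \ref{sptanglaorbita}. That this immersed submanifold is \emph{initial} (weakly embedded) is precisely the general fact that orbits of smooth Lie group actions are initial submanifolds (see \cite{ortega--ratiu}), which I would simply invoke.

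Next comes the Poisson property. Since $\mathcal{O}_{\RR^+\mu}=\bigcup_{r>0}\mathcal{O}_{r\mu}$ is a union of coadjoint orbits, it is saturated by the symplectic leaves of the Lie--Poisson structure on $\mathfrak{g}^*$. Concretely, for any $f\in\mathcal{C}^\infty(\mathfrak{g}^*)$ the Lie--Poisson Hamiltonian vector field $X_f(\nu)=-ad^*_{df(\nu)}\nu$ lies in $T_\nu\mathcal{O}_\nu\subset T_\nu\mathcal{O}_{\RR^+\mu}$, the inclusion being clear from Lemma \ref{sptanglaorbita} (which, by $\tilde{G}$-homogeneity, gives $ad^*_\xi\nu\in T_\nu\mathcal{O}_{\RR^+\mu}$ at every point). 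Hence every Hamiltonian vector field is tangent to $\mathcal{O}_{\RR^+\mu}$, the inclusion is a Poisson map, and $\mathcal{O}_{\RR^+\mu}$ is a Poisson submanifold.

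Finally, under the properness hypothesis I would upgrade the immersion to a closed embedding by showing that the $\tilde{G}$-action is proper on $\mathfrak{g}^*\setminus\{0\}$, since orbits of proper actions are automatically closed embedded submanifolds. Given $\nu_n\to\nu\neq 0$ and $(g_n,r_n)\cdot\nu_n\to w\neq 0$, properness of the coadjoint $G$-action extracts a convergent subsequence of the $g_n$, while the scaling factors are controlled through the identity $\norm{(g_n,r_n)\cdot\nu_n}=r_n\norm{Ad^*_{g_n^{-1}}\nu_n}$ together with the fact that a closed coadjoint orbit stays bounded away from $0$. \textbf{The main obstacle} lies exactly here: the $\RR^+$-factor is non-compact and the origin is a non-proper fixed point, so one must rule out the coadjoint orbit escaping toward $0$ against a compensating $r_n\to 0$. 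This is also why closedness is understood in $\mathfrak{g}^*\setminus\{0\}$ rather than in all of $\mathfrak{g}^*$ (the rays $r\mu$ with $r\to 0^+$ already show $0\in\overline{\mathcal{O}_{\RR^+\mu}}$), and controlling the interaction between this scaling direction and the coadjoint geometry is the delicate point of the argument.
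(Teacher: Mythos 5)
Your proposal follows exactly the paper's route: realize $\mathcal{O}_{\RR^+\mu}$ as the orbit through $\mu$ of the action $(g,r)\cdot\nu:=Ad^*_{g^{-1}}(r\nu)$ of $G\times\RR^+$, invoke the general fact that orbits are initial submanifolds, and use properness to get a closed embedding; the paper even makes the same identification of $\tilde{G}/\tilde{G}_\mu$ with the twisted product of Lemma \ref{bijectie}. Two of your refinements are genuine improvements over the paper's own text. First, you actually justify the \emph{Poisson} property (the cone is a union of coadjoint orbits, so every Lie--Poisson Hamiltonian vector field $X_f(\nu)=-ad^*_{df(\nu)}\nu$ is tangent to it); the paper's phrase ``as any orbit it is an initial Poisson submanifold'' conflates two different things, since being a $\tilde{G}$-orbit gives initiality but not, by itself, the Poisson property. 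Second, your diagnosis at the origin is correct and the paper's proof glosses over it: for $\mu\neq 0$ the cone accumulates at $0$, so it is \emph{not} closed in $\mathfrak{g}^*$, and the $\tilde{G}$-action on all of $\mathfrak{g}^*$ is \emph{never} proper (the stabilizer of $0$ is all of $G\times\RR^+$, which contains the noncompact factor $\RR^+$); the paper's sentence ``since the coadjoint action of $G$ is proper, so is the action of $G\times\RR^+$'' and its conclusion of closedness in $\mathfrak{g}^*$ are only correct when read on $\mathfrak{g}^*\setminus\{0\}$, exactly as you say.

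The one genuine gap is the step you yourself flag as ``the main obstacle'': you never prove properness of the $\tilde{G}$-action on $\mathfrak{g}^*\setminus\{0\}$, and as sketched the argument is circular --- you cannot use properness of the coadjoint action to extract a convergent subsequence of the $g_n$ before knowing that $Ad^*_{g_n^{-1}}\nu_n$ converges, and that requires first controlling the $r_n$. The missing observation which closes the argument is this: properness of the coadjoint action forces the stabilizer of $0\in\mathfrak{g}^*$, namely all of $G$, to be compact. Compactness of $G$ gives a constant $C\geq 1$ with $C^{-1}\norm{\nu}\leq\norm{Ad^*_{g^{-1}}\nu}\leq C\norm{\nu}$ for all $g$ and $\nu$; hence along your sequence $\norm{Ad^*_{g_n^{-1}}\nu_n}$ stays bounded above and away from $0$, so $r_n=\norm{(g_n,r_n)\cdot\nu_n}/\norm{Ad^*_{g_n^{-1}}\nu_n}$ lies in a compact subinterval of $(0,\infty)$. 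Passing to a subsequence $r_n\to r_\infty>0$, one gets $Ad^*_{g_n^{-1}}\nu_n\to w/r_\infty$, and compactness (or properness) of $G$ yields a convergent subsequence of the $g_n$. Note that the case $r_n\to\infty$ could be excluded by properness alone (then $Ad^*_{g_n^{-1}}\nu_n\to 0$, forcing $\nu=0$), but the case $r_n\to 0$ --- which, contrary to your phrasing, pairs with the coadjoint points escaping to \emph{infinity}, not to $0$ --- genuinely needs the compactness of $G$; this is precisely the ingredient your sketch lacks. With that inserted, your proof is complete and amounts to the corrected version of the paper's.
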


\begin{proof}
The simplest way to see this is to notice that the ray coadjoint
orbit of $\mu$ is actually the orbit through $\mu$ of the
following action of $G\times\RR^+$ on $\mathfrak{g}^*$
$$
(g,r)\cdot\mu':=Ad^*_{g^{-1}}r\mu',
$$
for any $(g,r)\in G\times\RR^+$ and any $\mu'\in\mathfrak{g}^*$.
Therefore, as any orbit it is an initial Poisson submanifold.
Since the coadjoint action of $G$ is proper, so is the action of
$G\times\RR^+$. Therefore, $\mathcal{O}_{\RR^+\mu}$ is a closed
embedded submanifold of $\mathfrak{g}^*$. Of course, one can
easily verify that the smooth structure of
$\mathcal{O}_{\RR^+\mu}$ as orbit of the $(G\times\RR^+)$-action
coincides with the one described in Lemma \ref{bijectie}. Indeed,
$G\times_{G_{\RR^+\mu}}\RR^+$ and
$\frac{G\times\RR^+}{(G\times\RR^+)_\mu}$ are diffeomorphic
manifolds.
\end{proof}

Since the coadjoint action of $G$ restricts to
$\mathcal{O}_{\RR^+\mu}$, we have the following.

Fix $\mu$ an element of $\mathfrak{g}^*$. Notice that the Lie
algebra of the kernel group of $\mu$, $\mathfrak{k}_\mu$ is closed
in $\mathfrak{g}_\mu$. Let $U$ be an open neighborhood of $0$ in
$\mathfrak{g}$ such that the exponential map $\exp:U\rightarrow
\exp(U)$ is a diffeomorphism. Choose $V\subset U$ a closed
neighborhood of $0$. Then $\exp (V)\subset\exp (U)$ is a closed
neighborhood of $e$. We want to show that $\exp(V)\cap K_\mu$ is
closed in $G$. Thus, suppose $(k_n)_{n\in\NN}=(\exp\xi_n)_{n\in\NN}$
is a convergent sequence of $\exp(V)\cap K_\mu$ with
$(\xi_n)_{n\in\NN}$ a sequence in $V$. Since $\exp^{-1}k_n=\xi_n$
for every $n\in\NN$, it follows that in fact
$\xi_n\in\mathfrak{k}_\mu$. Using the continuity of the exponential
map and the fact that the kernel algebra is closed in
$\mathfrak{g}$, we have that
$\lim\limits_{n\to\infty}\exp^{-1}k_n=\lim\limits_{n\to\infty}\xi_n=\xi\in\mathfrak{k}_\mu$.
Therefore
$\lim\limits_{n\to\infty}\exp\xi_n=\exp\lim\limits_{n\to\infty}\xi_n=\exp\xi\in
K_\mu$ and $\exp(V)\cap K_\mu$ is closed in $G$. A standard result
of Lie theory (see, for instance, \cite{duistermaat--kolk}),
Corollary 1.10.7) implies that the kernel group of $\mu$ is a closed
regular Lie subgroup of $G$ and the quotient $\frac{G}{K_\mu}$ is a
smooth manifold.

Now we are ready to define the manifold which will play the role of
the cotangent orbit for the ray reduction, namely the diagonal
product of the cone coadjoint orbit and the quotient of $G$ by the
corresponding kernel group
$$
\operatorname{Diag}\left(\mathcal{O}_{\RR^+\mu}\times\frac{G}{K_\mu}\right):=
\{(Ad^*_g r\mu, \hat{g})\,|\,g\in G\,\text{and}\, r\in\RR^+\}.
$$
Recall that given two surjective submersions $\pi_1:M_1\rightarrow
E$ and $\pi_2:M_2\rightarrow E$, the diagonal of $M_1\times M_2$
over $(\pi_1, \pi_2)$, $\operatorname{Diag}\left(M_1\times
M_2\right):=\{(x_1,x_2)\in M_1\times
M_2\,|\,\pi_1(x_1)=\pi_2(x_2)\}$ is a submanifold of $M_1\times M_2$
and its tangent space is given by
\begin{align*}
T_{(x_1,x_2)}(\operatorname{Diag}(M_1\times M_2)) \simeq
&\{(v_1,v_2)\in T_{x_1}M_1\times
T_{x_2}M_2\,|\,T_{x_1}\pi_1(v_1)=T_{x_2}\pi_2(v_2)\}=\\
\operatorname{Diag}(T_{x_1}M_1\times T_{x_2}M_2).
\end{align*}

 In particular, for
$\pi_1:\mathcal{O}_{\RR^+\mu}\rightarrow \frac{G}{G_{\RR^+\mu}}$
defined by $\pi_1(Ad^*_g r\mu):=\hat{g}$ and $\pi_2$ the canonical
projection from${\frac{G}{K_\mu}}$ onto $\frac{G}{G_{\RR^+\mu}}$
we obtain that $$T_{(Ad^*_g r\mu,
\hat{g})}\operatorname{Diag}\left(\mathcal{O}_{\RR^+\mu}\times\frac{G}{K_\mu}\right)
\simeq\operatorname{Diag}\left(T_{Ad^*_g
r\mu}\mathcal{O}_{\RR^+\mu}, T_{\hat{g}}
\frac{G}{K_\mu}\right)\simeq T_{Ad^*_g
r\mu}\mathcal{O}_{\RR^+\mu}.$$ More precisely, we have that
$$
T_{(Ad^*_g r\mu,
\hat{g})}\operatorname{Diag}\left(\mathcal{O}_{\RR^+\mu}\times\frac{G}{K_\mu}\right)
=\{\left(ad^*_{\xi}(Ad^*_g r\mu)+r'Ad^*_g r\mu,
\hat{\xi}_{G}(\hat{g})\right)\,|\,\xi\in\mathfrak{g}\,\text{,}\,r'\in\RR\},
$$
for any $(Ad^*_g r\mu, \hat{g})\in\mathcal{O}_{\RR^+\mu}$. Here
$\hat{\xi}_{G}(\hat{g})$ denotes the projection on
$\frac{G}{K_\mu}$ of the infinitesimal isometry associated to
$\xi$ with respect to the action by left translations of $G$ on
itself. Let $\omega^-_{\RR^+\mu}$ be the two form on
$\operatorname{Diag}\left(\mathcal{O}_{\RR^+\mu}\times\frac{G}{K_\mu}\right)$
defined by
\begin{multline}
\label{ecuatie}
 \omega^-_{\RR^+\mu}(Ad^*_g
r\mu,\hat{g})((ad^*_{\xi_1}Ad^*_g r\mu+r_1Ad^*_g r\mu,
\hat{\xi_{1G}}(\hat{g})),(ad^*_{\xi_2}Ad^*_g r\mu+r_2Ad^*_g r\mu,
\hat{\xi_{2G}}(\hat{g})))\\=-\langle Ad^*_g r\mu,[\xi_1,
\xi_2]\rangle+r_2\langle Ad^*_g r\mu, \xi_1\rangle -r_1\langle
Ad^*_g r\mu, \xi_2\rangle,
\end{multline}
for any $(Ad^*_g
r\mu,\hat{g})\in\operatorname{Diag}\left(\mathcal{O}_{\RR^+\mu}\times\frac{G}{K_\mu}\right)$
and any tangent vectors $(ad^*_{\xi_i}Ad^*_g r\mu+r_iAd^*_g r\mu,
\hat{\xi_{iG}})_{i=1, 2}\in T_{(Ad^*_g
r\mu,\hat{g})}\operatorname{Diag}\left(\mathcal{O}_{\RR^+\mu}\times\frac{G}{K_\mu}\right).$
In fact, as we will see from Theorem \ref{teorema--sase--unu},
$\left(\operatorname{Diag}\left(\mathcal{O}_
{\RR^+\mu}\times\frac{G}{K_\mu}\right), \omega^{-}_{\mathcal{O}_
{\RR^+\mu}}\right)$ is a well defined symplectic manifold. One
could also prove this directly, but we prefer to skyp the computations and apply instead
the ray reduction.

\begin{te}
\label{teorema--sase--unu}
 Consider the cotangent lift of the action by
left translations of a Lie group $G$ on itself. For every
$\mu\in\mathfrak{g}^*$ with
$\ker{\mu}+\mathfrak{g}_\mu=\mathfrak{g}$, the ray reduced space
$(T^*(G)_{\RR^+\mu}, \omega_{\RR^+\mu})$ is well defined and
symplectomorphic to the diagonal manifold
$\left(\operatorname{Diag}\left(\mathcal{O}_{\RR^+\mu}\times\frac{G}{K_\mu}\right),
\omega^-_{\RR^+\mu}\right)$ with symplectic form
$\omega^-_{\RR^+\mu}$ defined by \eqref{ecuatie}.
\end{te}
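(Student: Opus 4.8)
The plan is to identify the ray reduced space $(T^*G)_{\RR^+\mu}$ explicitly by computing the relevant momentum preimage and quotient, and then construct an explicit symplectomorphism onto the diagonal manifold. Recall that for the cotangent lift of left translations, $T^*G \cong G \times \mathfrak{g}^*$ via right trivialization, so that a point $\alpha_g$ is identified with $(g, \nu)$ where $\nu = J_L(\alpha_g) = T^*_e R_g(\alpha_g)$. The left momentum map $J_L$ is then simply the projection $(g,\nu) \mapsto \nu$, while the right-invariant structure makes the action of $G$ on itself manifest. First I would check the hypotheses of Theorem \ref{reducere--unu}: the condition $\ker\mu + \mathfrak{g}_\mu = \mathfrak{g}$ is assumed, the cotangent lift of left translation is free (hence $K_\mu$ acts freely and properly), and transversality of $J_L$ to $\RR^+\mu$ follows from Lemma \ref{lema1} together with local freeness. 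This guarantees $(T^*G)_{\RR^+\mu}$ is a well-defined symplectic orbifold — in fact a manifold, since the action is free.

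Next I would compute the preimage and the quotient concretely. In the trivialization $T^*G \cong G\times\mathfrak{g}^*$, we have $J_L^{-1}(\RR^+\mu) = \{(g,r\mu)\,|\,g\in G,\,r\in\RR^+\}$, which is diffeomorphic to $G\times\RR^+$. The kernel group $K_\mu$ acts on this preimage; I would show the quotient by $K_\mu$ is diffeomorphic to $\operatorname{Diag}\left(\mathcal{O}_{\RR^+\mu}\times\frac{G}{K_\mu}\right)$ via the map sending the class of $(g,r\mu)$ to the pair $\left(Ad^*_{g^{-1}}r\mu,\,\widehat{g}\right)$, using that $K_\mu$ preserves the value $r\mu$ (since $\mathfrak{k}_\mu = \ker(\mu|_{\mathfrak{g}_\mu})$ forces $Ad^*_k(r\mu) = r\mu$ for $k\in K_\mu$). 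The diagonal condition $\pi_1(Ad^*_{g^{-1}}r\mu) = \pi_2(\widehat{g})$ in $G/G_{\RR^+\mu}$ is precisely what makes this map land in the diagonal and be a bijection; the explicit tangent space description given before the theorem statement via Lemma \ref{sptanglaorbita} confirms this is a diffeomorphism onto the diagonal submanifold.

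The final and most delicate step is to verify that this diffeomorphism intertwines the reduced symplectic form $\omega_{\RR^+\mu}$ with $\omega^-_{\RR^+\mu}$ of equation \eqref{ecuatie}. By Theorem \ref{reducere--unu}, the reduced form satisfies $\pi^*_{\RR^+\mu}\omega_{\RR^+\mu} = i^*_{\RR^+\mu}\omega_0$, where $\omega_0 = -d\theta_0$ is the canonical form; so it suffices to pull back $\omega^-_{\RR^+\mu}$ through the quotient projection and compare it with $i^*_{\RR^+\mu}\omega_0$ on the submanifold $J_L^{-1}(\RR^+\mu)$. I would express the canonical one-form $\theta_0$ in right trivialization (where $\theta_0(g,\nu) = \langle\nu, \cdot\rangle$ acting through the right Maurer-Cartan form) and differentiate. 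Evaluating on the tangent vectors $\left(ad^*_{\xi_i}(Ad^*_{g^{-1}}r\mu) + r_i Ad^*_{g^{-1}}r\mu,\,\widehat{\xi_{iG}}\right)$, the three terms of \eqref{ecuatie} — the Lie bracket term and the two ``$r_i$'' terms — should emerge directly from the standard formula for the exterior derivative of $\theta_0$ on $G\times\mathfrak{g}^*$, with the $r_i$-terms arising precisely from the radial $\RR^+$-direction that distinguishes the ray reduction from the usual point reduction.

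\textbf{The hard part} will be the bookkeeping in this last computation: one must carefully track how the radial (conformal) directions in $\RR^+\mu$ contribute, since these are exactly the directions that are quotiented out in ordinary Marsden-Weinstein reduction but retained here. Keeping the distinction between $\mathfrak{g}_\mu$, $\mathfrak{g}_{\RR^+\mu}$, and $\mathfrak{k}_\mu$ straight — and confirming that the form $\omega^-_{\RR^+\mu}$ is genuinely basic with respect to the $K_\mu$-projection so that it descends — is where the essential care is needed. The nondegeneracy of $\omega^-_{\RR^+\mu}$ then comes for free, since it equals a pullback of the reduced form which Theorem \ref{reducere--unu} already guarantees is symplectic; this is precisely the remark preceding the theorem that one prefers to derive symplecticity of the diagonal form from the reduction rather than by direct computation.
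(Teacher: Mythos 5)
Your proposal is correct and takes essentially the same route as the paper: well-definedness from Theorem \ref{reducere--unu}, a diffeomorphism onto the diagonal induced by the right momentum map (your right-trivialized map $[(g,r\mu)]\mapsto(Ad^*_{g^{-1}}r\mu,\hat{g})$ is exactly the paper's $\bar{J}_R$), the form comparison carried out by pulling everything back to $J_L^{-1}(\RR^+\mu)$ via $\pi^*_{\RR^+\mu}\omega_{\RR^+\mu}=i^*_{\RR^+\mu}(-d\theta)$, and nondegeneracy of $\omega^-_{\RR^+\mu}$ inherited for free from the reduced form. The only difference is bookkeeping: you compute $-d\theta$ in the right trivialization $T^*G\simeq G\times\mathfrak{g}^*$, while the paper performs the identical computation intrinsically, using the vector fields $X^{\xi_i}$ with explicit flows and the identities $\theta(X^{\xi_i})=J_R^{\xi_i}$, together with Lemma \ref{bijectie} for smoothness of the identification.
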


\begin{proof}
Since the cotangent lift of left translations is a free and proper
action, if $\mu$ is an element of $\mathfrak{g}^*$ with
$\ker{\mu}+\mathfrak{g}_\mu=\mathfrak{g}$ the ray reduced space at
$\mu$, $(T^*G)_{\RR^+\mu}=\frac{J_L^{-1}(\RR^+\mu)}{K_\mu}$ is
well defined. $J_R$ is the momentum map defined by
\eqref{apl--moment--st}.

Note that $J_L^{-1}(\RR^+\mu)=\{T^*_g R_{g^{-1}}(r\mu)\,|\,g\in
G\,,\,r\in\RR^+\}$. The momentum map associated to right
translations (see \eqref{apl--moment--dr}) induces the application
$\bar{J}_R:(T^*G)_{\RR^+\mu}\rightarrow\operatorname{Diag}\left(\mathcal{O}_
{\RR^+\mu}\times\frac{G}{K_\mu}\right)$ defined by
$\bar{J}_R([\alpha_g]):= (J_R(\alpha_g),\hat{g})=(Ad^*_g
r\mu,\hat{g})$, for any $\alpha_g=T^*_g R_{g^{-1}}(r\mu)$. To see
that $\bar{J}_R$ is well defined, fix an arbitrary $k\in K_\mu$.
Then,
\begin{align*}
\bar{J}_R([k\cdot\alpha_g])=\bar{J}_R([k\cdot T^*_g&
R_{g^{-1}}r\mu])= \bar{J}_R([T^*_{kg}L_{k^{-1}}T^*_g
R_{g^{-1}}r\mu])&\\&=(T^*_e L_{kg}T^*_{kg}(R_{g^{-1}}\circ
L_{k^{-1}})(r\mu), \hat{kg})=(Ad^*_g(r\mu),\hat{g}),
\end{align*}
proving thus that $\bar{J}_R$ is indeed well-defined. Since the
kernel group of $\mu$ is a subgroup of its isotropy group,
$\bar{J}_R$ is also one to one. Surjectiveness is obvious and hence
$\bar{J}_R$ is a bijection. Its inverse is given by
$\bar{J}_{R^{-1}}:\operatorname{Diag}\left(\mathcal{O}_
{\RR^+\mu}\times\frac{G}{K_\mu}\right)\rightarrow
(T^*G)_{\RR^+\mu}$, $\bar{J}_{R^{-1}}(Ad^*_g r\mu,\hat{g})=[T^*_g
R_{g^{-1}}r\mu]$.

To prove that $\bar{J}_R$ is smooth, and hence a diffeomorphism we
will use the right invariant $1$-form $\lambda\in\Lambda^1(G)$
given by $\lambda(g)(v_g):=T^*_g R_{g^{-1}}\mu(v_g)$. The graph of
$\lambda$ defines the diffeomorphism $F':G\rightarrow
J^{-1}_L(\mu)$, $F'(g):=\lambda(g)=T^*_g R_{g^{-1}}\mu$. Consider
the map $F:G\times\RR^+\rightarrow J^{-1}_L(\RR^+\mu)$ given by
$F(g,r):=F'(g)r$, for any elements $g\in G$ and $r\in\RR^+$. It is
obviously smooth and we want to show that it descends to a
diffeomorphism $\bar{F}:\operatorname{Diag}\left(G\times
_{G_{\RR^+\mu}}\RR^+)\times\frac{G}{K_\mu}\right)\rightarrow
(T^*G)_{\RR^+\mu}$, $\bar{F}(([g,r],\hat{g})=[T^*_g
R_{g^{-1}}r\mu]$. Let us first verify that it is a well defined
map. For this, let $(k,r_k)\in G\times _{G_{\RR^+\mu}}\RR^+$ with
$\hat{g}=\hat{kg}$, so that
$([(kg,\frac{r}{r_k})],\hat{kg})=([(g,r)],\hat{g})$. The equality
of the second components implies that actually $k$ belongs to the
kernel group of $\mu$. Then we obtain
\begin{multline*}
\bar{F}([(kg,\frac{r}{r_k})],\hat{kg})=[T^*_{kg}R_{(kg)^{-1}}\frac{r}{r_k}\mu]=
[T^*_{kg}R_{g^{-1}}T^*_k L_{k^{-1}} T^*_e L_k
T^*_{k}R_{k^{-1}}\frac{r}{r_k}\mu]=\\
[T^*_{kg}R_{g^{-1}} T^*_k L_{k^{-1}} Ad^*_k\frac{r}{r_k}\mu]=
[k\cdot T^*_g R_{g^{-1}}r\mu]=[T^*_g
R_{g^{-1}}r\mu]=\bar{F}([g,r],\hat{g}).
\end{multline*}
Observe that $\bar{J}_R\circ\bar{F}$ is precisely the
diffeomrophism of Lemma \ref{bijectie}. Therefore, $\bar{J}_R$ is
also a diffeomorphism. Its inverse is given by
 $$
 \bar{J}_{R^{-1}}:\operatorname{Diag}\left(\mathcal{O}_
{\RR^+\mu}\times\frac{G}{K_\mu}\right)\rightarrow
(T^*G)_{\RR^+\mu}\quad\text{,}\quad \bar{J}_{R^{-1}}(Ad^*_g
r\mu,\hat{g})=[T^*_g R_{g^{-1}} r\mu],
$$
and we can endow $\operatorname{Diag}\left(\mathcal{O}_
{\RR^+\mu}\times\frac{G}{K_\mu}\right)$ with the symplectic form
$\omega^{-}_{\mathcal{O}_
{\RR^+\mu}}:=\bar{J}^*_{R^{-1}}\omega_{\RR^+\mu}$. In order to
give the explicit description of $\omega^{-}_{\mathcal{O}_
{\RR^+\mu}}$ fix
$(Ad^*_{g}r\mu,\hat{g})\in\operatorname{Diag}\left(\mathcal{O}_
{\RR^+\mu}\times\frac{G}{K_\mu}\right)$ and two tangent vectors
$\{v_i=(ad^*_{\xi_i}Ad^*_g r\mu+r_iAd^*_g r\mu,
\hat{\xi}_{iG}(\hat{g})\}_{i=1,2}$ in\newline
$T_{(Ad^*_{g}r\mu,\hat{g})}\operatorname{Diag}\left(\mathcal{O}_
{\RR^+\mu}\times\frac{G}{K_\mu}\right)$. It follows that
$$
\omega^{-}_{\mathcal{O}_
{\RR^+\mu}}(Ad^*_{g}r\mu,\hat{g})(v_1,v_2)=\omega_{\RR^+\mu}([T^*_g
R_{g^{-1}}r\mu])(T_{(Ad^*_{g}r\mu,\hat{g})}\bar{J}_{R^{-1}}(v_1),
T_{(Ad^*_{g}r\mu,\hat{g})}\bar{J}_{R^{-1}}(v_2)).
$$
Note that
\begin{multline}
T_{(Ad^*_{g}r\mu,\hat{g})}\bar{J}_{R^{-1}}(v_i)=\\T_{(Ad^*_{g}r\mu,\hat{g})}\bar{J}_{R^{-1}}
\left(\left.\frac{d}{dt}\right|_{t=0}\left(Ad^*_{\exp
t\xi_i}e^{tr_i}Ad^*_{g}r\mu, \widehat{(\exp t\xi_i\cdot g)}\right)\right)\\=
\left.\frac{d}{dt}\right|_{t=0}\left(\bar{J}_{R^{-1}}(Ad^*_{\exp
t\xi_i}e^{tr_i}Ad^*_{g}r\mu, \widehat{(\exp t\xi_i\cdot g)})\right)\\=
\left.\frac{d}{dt}\right|_{t=0}\left(\pi_{K_\mu}(T^*_{g\exp
t\xi_i}R_{(g\exp
t\xi_i )^{-1}}re^{tr_i}\mu)\right)\\
=T_{T^*_g
R_{g^{-1}}r\mu}\pi_{K_\mu}\left(\left.\frac{d}{dt}\right|_{t=0}e^{tr_i}(T^*_g
R_{g^{-1}}r\mu)\cdot\exp t\xi_i\right)= T_{T^*_g
R_{g^{-1}}r\mu}\pi_{K_\mu}(X^{\xi_i}(T^*_g R_{g^{-1}}r\mu)),
\nonumber
\end{multline}
where $X^{\xi_i}$ is the vector field on $T^*G$ with flow given
by\newline $\Phi^i(t,\alpha_{g'}): =T^*_{g\exp t\xi_i}R_{\exp
-t\xi_i} e^{tr_i}\alpha_{g'}$, for any $\alpha_{g'}\in T^*_{g'}G$.
Then, using the fact that
$\pi_{\RR^+\mu}^*\omega_{\RR^+\mu}=i_{\RR^+\mu}^*(-d\theta)$ and the
above calculus, we obtain
\begin{multline}
 \omega^{-}_{\mathcal{O}_
{\RR^+\mu}}(Ad^*_{g}r\mu,\hat{g})(v_1,v_2)=
\pi_{K_\mu}^*\omega_{\RR^+\mu}(T^*_g R_{g^{-1}}r\mu)(X^{\xi_1}(T^*_g
R_{g^{-1}}r\mu), X^{\xi_2}(T^*_g R_{g^{-1}}r\mu))\\= -d\theta(T^*_g
R_{g^{-1}}r\mu)(X^{\xi_1}(T^*_g R_{g^{-1}}r\mu), X^{\xi_2}(T^*_g
R_{g^{-1}}r\mu))=\\-X^{\xi_1}(\theta(X^{\xi_2})(T^*_g
R_{g^{-1}}r\mu)+X^{\xi_2}(\theta(X^{\xi_1})(T^*_g
R_{g^{-1}}r\mu)+\theta([X^{\xi_1}, X^{\xi_2}])(T^*_g
R_{g^{-1}}r\mu). \nonumber
\end{multline}
Next, we want to show that
\begin{equation}
\label{ecuatia unu} \theta(X^{\xi_i})= J^{\xi_i}_R\, \text{and}\,
X^{\xi_i}(J^{\xi_j}_R)(T^*_g R_{g^{-1}}r\mu)= \langle Ad^*_g r\mu,
[\xi_i, \xi_j]\rangle +r_i \langle Ad^*_g r\mu, \xi_j\rangle,
\end{equation}
for $i=1,2$. Indeed, for any $\alpha_g\in T^*_g G$ we have
\begin{align*}
\theta(X^{\xi_i})(\alpha_g)= \langle\alpha_g, T_{\alpha_g}\pi
(X^{\xi_i}(\alpha_g))\rangle =\langle\alpha_g, \left
.\frac{d}{dt}\right |_{t=0}\pi(e^{tr_i}\alpha_g\cdot\exp
t\xi_i)\rangle \\=\langle \alpha_g, \xi_{iG}(g)\rangle
=J^{\xi_i}_R(\alpha_g).
\end{align*}
This also implies that $X^{\xi_i}$ and $\xi_{iG}$ are $\pi$-related
vector fields. And
\begin{multline}
X^{\xi_i}(J^{\xi_j}_R)(T^*_g R_{g^{-1}}r\mu)=\left
.\frac{d}{dt}\right |_{t=0}J^{\xi_j}_R(e^{tr_i}T^*_g
R_{g^{-1}}r\mu\cdot\exp t\xi_i)=\\
 \left .\frac{d}{dt}\right
|_{t=0} T_e^* L_{g\exp t\xi_i}(T^*_{g\exp t\xi_i}R_{\exp -t\xi_i}
(e^{tr_i}T^*_g R_{g^{-1}}r\mu))(\xi_j)= \\ \left .\frac{d}{dt}\right
|_{t=0} Ad^*_{g\exp t\xi_i} (e^{tr_i}r\mu)(\xi_j)=Ad^*_g (ad^*_{Ad_g
\xi_i}r\mu+r_i r\mu)(\xi_j)=\\Ad^*_g(ad^*_{Ad_g \xi_i}r\mu+r_i
r\mu)(\xi_j)= (ad^*_{\xi_i} (Ad^*_g r\mu)+r_i Ad^*_g r\mu)(\xi_j).
\nonumber
\end{multline}
Note that in the above calculation we have again used formula
\eqref{derivare--curbe--coadjuncta}. Applying \eqref{ecuatia unu},
it follows that
\begin{eqnarray*}
 \omega^{-}_{\mathcal{O}_
{\RR^+\mu}}(Ad^*_{g}r\mu,\hat{g})(v_1,v_2) =
-X^{\xi_1}(\theta(X^{\xi_2})(T^*_g R_{g^{-1}}r\mu) +
X^{\xi_2}(\theta(X^{\xi_1})(T^*_g R_{g^{-1}}r\mu)\\  +
\theta(X^{[\xi_1, \xi_2]})(T^*_g R_{g^{-1}}r\mu) = -\langle Ad^*_g
r\mu, [\xi_1,\xi_2]\rangle - r_1\langle Ad^*_g r\mu, \xi_2\rangle
\\+ \langle Ad^*_g r\mu, [\xi_2,\xi_1]\rangle + r_2\langle Ad^*_g r\mu,
\xi_1\rangle +J_R^{[\xi_1,\xi_2]}(T^*_g R_{g^{-1}} r\mu)\\ =
-\langle Ad^*_g r\mu, [\xi_1,\xi_2]\rangle + r_2\langle Ad^*_g
r\mu, \xi_1\rangle -r_1\langle Ad^*_g r\mu, \xi_2\rangle .
\end{eqnarray*}
In particular, for $g=e$ and $r=1$ we have that
\begin{eqnarray*}
\omega^{-}_{\mathcal{O}_
{\RR^+\mu}}(\mu,\hat{e})((ad^*_{\xi_1}\mu+r_1\mu, \hat{\xi}_1),
(ad^*_{\xi_2}\mu+r_2\mu, \hat{\xi_2}))=-\langle
\mu,[\xi_1,\xi_2]\rangle +r_2\langle \mu,\xi_1\rangle
\\-r_1\langle \mu, \xi_2\rangle ,\,\text{for any}\,
\xi_{\{i=1,2\}}\in\mathfrak{g}.
\end{eqnarray*}
 The first term in the
above expression is precisely $\omega^{-}_{\mathcal{O}_
{\mu}}(\mu)(ad^*_{\xi_1}\mu, ad^*_{\xi_2}\mu)$ and hence the minus
sign in the notation of the symplectic form on
$\operatorname{Diag}\left(\mathcal{O}_
{\RR^+\mu}\times\frac{G}{K_\mu}\right)$.
\end{proof}

\begin{co}
In the hypothesis of Theorem \ref{teorema--sase--unu}, the
symplectic form $\omega^{-}_{\mathcal{O}_ {\RR^+\mu}}$ defined by
\eqref{ecuatie} is $G$-invariant with respect to the following action
$$
g_1\cdot (Ad^*_g r\mu,\hat{g}):=\left(Ad^*_{g_1^{-1}}Ad^*_g
r\mu,\widehat{gg_1^{-1}}\right),
$$
for each $g_1$ in $G$ and $(Ad^*_g r\mu,\hat{g})$ in
$\operatorname{Diag}\left(\mathcal{O}_
{\RR^+\mu}\times\frac{G}{K_\mu}\right)$.
\end{co}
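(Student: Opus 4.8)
The plan is to recognize the $G$-action in the statement as the push-forward under $\bar{J}_R$ of the residual action on the reduced space $(T^*G)_{\RR^+\mu}$ coming from \emph{right} translations, and then to exploit that right translations act by symplectomorphisms. Recall from the proof of Theorem \ref{teorema--sase--unu} that $\omega^{-}_{\mathcal{O}_{\RR^+\mu}}=\bar{J}^*_{R^{-1}}\omega_{\RR^+\mu}$, so it suffices to produce, for each $g_1\in G$, a symplectomorphism of $((T^*G)_{\RR^+\mu},\omega_{\RR^+\mu})$ that is intertwined by $\bar{J}_R$ with the given action $\Theta_{g_1}(Ad^*_g r\mu,\hat{g}):=(Ad^*_{g_1^{-1}}Ad^*_g r\mu,\widehat{gg_1^{-1}})$.

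First I would introduce, for fixed $g_1$, the cotangent lift of the right translation $R_{g_1^{-1}}\colon h\mapsto hg_1^{-1}$, which is a symplectomorphism of $(T^*G,-d\theta)$ because every cotangent lift preserves the canonical one-form $\theta$. Since left and right translations commute, this map commutes with the cotangent-lifted left $K_\mu$-action used to form the quotient; and because $J_L$ (see \eqref{apl--moment--st}) is right invariant, it preserves the level set $J_L^{-1}(\RR^+\mu)$. Hence it descends to a diffeomorphism $\bar{R}_{g_1}$ of $(T^*G)_{\RR^+\mu}$, and, being the descent of a symplectomorphism that preserves both $i^*_{\RR^+\mu}\omega$ and the vertical distribution of $\pi_{\RR^+\mu}$, it satisfies $\bar{R}^*_{g_1}\omega_{\RR^+\mu}=\omega_{\RR^+\mu}$. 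One checks that $g_1\mapsto\bar{R}_{g_1}$ is a left $G$-action.

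Next I would verify the intertwining $\bar{J}_R\circ\bar{R}_{g_1}=\Theta_{g_1}\circ\bar{J}_R$. On the base this is immediate: $R_{g_1^{-1}}$ sends $g$ to $gg_1^{-1}$, so the $G/K_\mu$-component of $\bar{J}_R$ transforms by $\hat{g}\mapsto\widehat{gg_1^{-1}}$, exactly the second slot of $\Theta_{g_1}$. On the $\mathcal{O}_{\RR^+\mu}$-component one uses the equivariance of the left-invariant momentum map $J_R$ of \eqref{apl--moment--dr}: transporting $\alpha_g=T^*_g R_{g^{-1}}r\mu$ by the right lift and applying $J_R$ yields $Ad^*_{g_1^{-1}}Ad^*_g r\mu$, the first slot of $\Theta_{g_1}$; here the coadjoint convention $Ad^*_a Ad^*_b=Ad^*_{ba}$ gives $Ad^*_{g_1^{-1}}Ad^*_g=Ad^*_{gg_1^{-1}}$, which is precisely what keeps $\Theta_{g_1}$ tangent to the diagonal. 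With the intertwining in hand, $\Theta^*_{g_1}\omega^{-}_{\mathcal{O}_{\RR^+\mu}}=\Theta^*_{g_1}\bar{J}^*_{R^{-1}}\omega_{\RR^+\mu}=\bar{J}^*_{R^{-1}}\bar{R}^*_{g_1}\omega_{\RR^+\mu}=\bar{J}^*_{R^{-1}}\omega_{\RR^+\mu}=\omega^{-}_{\mathcal{O}_{\RR^+\mu}}$, proving $G$-invariance.

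The main obstacle is purely bookkeeping: pinning down the handedness conventions so that the cotangent-lifted right translation descends correctly (commuting with the left $K_\mu$-action and preserving $J_L^{-1}(\RR^+\mu)$) and so that the equivariance of $J_R$ produces $Ad^*_{g_1^{-1}}Ad^*_g r\mu$ rather than its inverse or transpose. A fully computational alternative---substituting the push-forwards $T\Theta_{g_1}(v_i)$ directly into \eqref{ecuatie}---is possible but less clean, since it forces one to re-express the transported tangent vectors (whose orbit part is generated by $Ad_{g_1}\xi_i$ while their base part is still generated by $\xi_i$) back in the standard parametrization of \eqref{ecuatie} before comparing; the conceptual route avoids this reparametrization entirely.
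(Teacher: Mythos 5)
Your proof is correct, but it runs along a genuinely different track from the paper's. The paper proves the corollary by exactly the direct computation you set aside at the end as less clean: it fixes tangent vectors $v_\xi,v_\eta$, pushes them forward under the action, and rewrites the pushed vectors in the standard diagonal parametrization --- the key identity being $Ad^*_{g_1^{-1}}\circ ad^*_{\xi}=ad^*_{Ad_{g_1}\xi}\circ Ad^*_{g_1^{-1}}$, so that $g_1\cdot v_\xi$ becomes the vector $v_{Ad_{g_1}\xi}$ at $(Ad^*_{gg_1^{-1}}r\mu,\widehat{gg_1^{-1}})$ with the same ray parameter $r_\xi$ --- then substitutes into \eqref{ecuatie} and concludes from $[Ad_{g_1}\xi,Ad_{g_1}\eta]=Ad_{g_1}[\xi,\eta]$ and $\langle Ad^*_{gg_1^{-1}}r\mu,Ad_{g_1}\zeta\rangle=\langle Ad^*_g r\mu,\zeta\rangle$. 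Your route instead transports the problem through $\bar{J}_R$: the given action is intertwined with the descent of the cotangent-lifted right translations to $(T^*G)_{\RR^+\mu}$, and that descent preserves $\omega_{\RR^+\mu}$ for the reasons you cite (cotangent lifts preserve $\theta$; they commute with the left $K_\mu$-action; $J_L$ of \eqref{apl--moment--st} is right invariant, so $J_L^{-1}(\RR^+\mu)$ is preserved; $\pi^*_{\RR^+\mu}$ is injective). All the facts you invoke are available in the paper --- in particular $\omega^-_{\mathcal{O}_{\RR^+\mu}}=\bar{J}^*_{R^{-1}}\omega_{\RR^+\mu}$ and $J_R(T^*_gR_{g^{-1}}r\mu)=Ad^*_g r\mu$ come from the proof of Theorem \ref{teorema--sase--unu}, and your handedness convention $Ad^*_aAd^*_b=Ad^*_{ba}$ is indeed the paper's. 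What your argument buys: it bypasses the tangent-vector reparametrization entirely, which is precisely where the paper's bookkeeping is most fragile (for the paper's formulas to be consistent, the second slots of the diagonal tangent vectors must be read as projections of left-invariant vector fields, even though $\hat{\xi}_G$ was introduced as a left-translation generator), and it explains structurally why invariance holds: the action is the $\bar{J}_R$-image of an honestly symplectic action on a reduced space; this viewpoint also feeds naturally into the paper's next proposition, where the action is shown to admit the momentum map $-I_{\mathcal{O}_{\RR^+\mu}}$. What the paper's computation buys: it verifies the invariance of \eqref{ecuatie} as a bare formula, with no further appeal to the identification with the reduced cotangent bundle.
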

\begin{proof}
Fix $g_1$ in $G$ and $x:=(Ad^*_g r\mu,\hat{g})$ in
$\operatorname{Diag}\left(\mathcal{O}_
{\RR^+\mu}\times\frac{G}{K_\mu}\right)$. Let $v_\xi$ be the tangent
vector $(ad^*_\xi(Ad^*_g r\mu)+r_\xi Ad^*_g r\mu,
\hat{\xi}_G(\hat{g}))\in T_{(Ad^*_g
r\mu,\hat{g})}\operatorname{Diag}\left(\mathcal{O}_
{\RR^+\mu}\times\frac{G}{K_\mu}\right)$. Here $\xi$ is an arbitrary
element of $\mathfrak{g}$. Then, we have
\begin{multline*}
\omega^-_{\mathcal{O}_ {\RR^+\mu}}(g_1\cdot x)(g_1\cdot v_\xi,
g_1\cdot v_\eta)= \\\omega^-_{\mathcal{O}_ {\RR^+\mu}}(g_1\cdot
x)\left(\left .\frac{d}{dt}\right |_{t=0}(Ad^*_{\exp t\xi
g_1^{-1}}e^{tr_\xi}Ad^*_g r\mu), \widehat{T_g
R_{g_1^{-1}}\xi_G}(\hat{g}), \right.\\\left. \left
.\frac{d}{dt}\right |_{t=0}(Ad^*_{\exp t\eta
g_1^{-1}}e^{tr_\eta}Ad^*_g r\mu), \widehat{T_g
R_{g_1^{-1}}\eta_G}(\hat{g})\right)=\\\omega^-_{\mathcal{O}_
{\RR^+\mu}}(g_1\cdot x)\left(\left(Ad^*_{g_1^{-1}}v_\xi,
\widehat{(Ad_{g_1}\xi)}_G(\widehat{gg_1^{-1}})\right),\left(Ad^*_{g_1^{-1}}v_\eta,\widehat{(Ad_{g_1}\eta)}_G(\widehat{gg_1^{-1}})\right)\right)=\\
\omega^-_{\mathcal{O}_ {\RR^+\mu}}(g_1\cdot
x)\left(\left(v_{Ad_{g_1}\xi},\widehat{(Ad_{g_1}\xi)}_G(\widehat{gg_1^{-1}})\right),\left(v_{Ad_{g_1}\eta},\widehat{(Ad_{g_1}\eta)}_
G(\widehat{gg_1^{-1}})\right)\right)=\\-\langle
Ad^*_{gg_1^{-1}}r\mu,[Ad_{g_1}\xi,Ad_{g_1}\eta]\rangle+r_\eta\langle
Ad^*_{gg_1^{-1}}r\mu,Ad_{g_1}\xi\rangle-r_\xi\langle
Ad^*_{gg_1^{-1}}r\mu,Ad_{g_1}\eta\rangle=\\
-\langle Ad^*_g r\mu,[\xi,\eta]\rangle+r_\eta\langle Ad^*_g
r\mu,\xi\rangle-r_\xi\langle Ad^*_g r\mu,\eta\rangle=
\omega^-_{\mathcal{O}_{\RR^+ \mu}} (x)(v_\xi,v_\eta).
\end{multline*}
Therefore, $\omega^{-}_{\mathcal{O}_ {\RR^+\mu}}$ is $G$-invariant.
\end{proof}

\begin{pr}
The symplectomorphic $G$-action on
$\left(\operatorname{Diag}\left(\mathcal{O}_
{\RR^+\mu}\times\frac{G}{K_\mu}\right), \omega^-_{\mathcal{O}_
{\RR^+\mu}}\right)$ admits an equivariant momentum map
$$
-I_{\mathcal{O}_ {\RR^+\mu}}:\operatorname{Diag}\left(\mathcal{O}_
{\RR^+\mu}\times\frac{G}{K_\mu}\right)\rightarrow\mathfrak{g}^*\,\text{,}\,I_{\mathcal{O}_
{\RR^+\mu}}(Ad^*_g r\mu,\hat{g}):=-Ad^*_g r\mu,
$$
for each $(Ad^*_g r\mu,\hat{g})$ in
$\operatorname{Diag}\left(\mathcal{O}_
{\RR^+\mu}\times\frac{G}{K_\mu}\right)$.
\end{pr}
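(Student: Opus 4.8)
The plan is to verify the two defining properties of a momentum map for $\Phi:=-I_{\mathcal{O}_{\RR^+\mu}}$, that is, $G$-equivariance together with the identity $\mathbf{d}\langle\Phi,\eta\rangle=\omega^-_{\mathcal{O}_{\RR^+\mu}}(\cdot\,,\eta_{\mathrm{Diag}})$ for every $\eta\in\mathfrak{g}$, where $\eta_{\mathrm{Diag}}$ is the infinitesimal generator of the $G$-action and $\langle\Phi,\eta\rangle(Ad^*_g r\mu,\hat g)=\langle Ad^*_g r\mu,\eta\rangle$. Equivariance I would read straight off the definition of the action: since $Ad^*$ is an anti-homomorphism, $\Phi\bigl(g_1\cdot(Ad^*_g r\mu,\hat g)\bigr)=Ad^*_{g_1^{-1}}Ad^*_g r\mu=Ad^*_{g_1^{-1}}\Phi(Ad^*_g r\mu,\hat g)$, which is exactly equivariance with respect to the coadjoint action.

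For the momentum relation I would avoid attacking a generic point. Instead I would exploit that, by the preceding Corollary, $\omega^-_{\mathcal{O}_{\RR^+\mu}}$ is $G$-invariant, while $\Phi$ is $G$-equivariant. Writing $\Upsilon_{g_1}$ for the action, the standard naturality $T\Upsilon_{g_1}\bigl(\eta_{\mathrm{Diag}}(x)\bigr)=(Ad_{g_1}\eta)_{\mathrm{Diag}}\bigl(\Upsilon_{g_1}(x)\bigr)$ shows that the momentum-map identity is itself a $G$-invariant condition, so it propagates along orbits and it is enough to check it on one representative of each orbit. Since every point can be written as $g_1\cdot(r\mu,\hat e)$ with $g_1=g^{-1}$, the orbits are indexed by the ray parameter $r>0$, and I only need the identity at the base points $(r\mu,\hat e)$.

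At $(r\mu,\hat e)$ the computation is clean because all $Ad_g$ corrections disappear. The generator is $\eta_{\mathrm{Diag}}(r\mu,\hat e)=\left.\frac{d}{dt}\right|_{t=0}\bigl(Ad^*_{\exp(-t\eta)}r\mu,\widehat{\exp(-t\eta)}\bigr)$, which in the parametrisation of tangent vectors underlying \eqref{ecuatie} is the vector with data $(\xi,r')=(-\eta,0)$. A general tangent vector there has data $(\xi,r')$, and $\mathbf{d}\langle\Phi,\eta\rangle$ applied to it equals $\langle ad^*_\xi(r\mu)+r'r\mu,\eta\rangle=\langle r\mu,[\xi,\eta]\rangle+r'\langle r\mu,\eta\rangle$, using the $ad^*$ sign convention fixed by \eqref{derivare--curbe--coadjuncta}. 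Substituting the two data vectors into \eqref{ecuatie} gives $\omega^-_{\mathcal{O}_{\RR^+\mu}}\bigl((\xi,r'),(-\eta,0)\bigr)=-\langle r\mu,[\xi,-\eta]\rangle-r'\langle r\mu,-\eta\rangle=\langle r\mu,[\xi,\eta]\rangle+r'\langle r\mu,\eta\rangle$, which matches, so $\Phi$ is the claimed momentum map.

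The conceptual reason behind this, and an alternative route, is that under the symplectomorphism $\bar J_R$ of Theorem \ref{teorema--sase--unu} the $G$-action above corresponds to the residual action induced by right translations on $(T^*G)_{\RR^+\mu}$, and $\Phi$ is precisely the descent of the right-invariant momentum map $J_R$ of \eqref{apl--moment--dr}, since $J_R(T^*_g R_{g^{-1}}r\mu)=Ad^*_g r\mu$; because right translations commute with the left translations used in the reduction and $J_R$ is $K_\mu$-invariant, $J_R$ passes to the quotient as a momentum map for that residual action. The main obstacle here is purely the bookkeeping of conventions: one must fix the coadjoint/$ad^*$ signs (via \eqref{derivare--curbe--coadjuncta}) and the sign in the momentum-map relation consistently, and compute the generator of the $g_1\mapsto(Ad^*_{g_1^{-1}}\cdot,\widehat{\cdot\,g_1^{-1}})$ action with the correct signs. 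Once this is settled, the verification at $(r\mu,\hat e)$ is a one-line substitution and $G$-invariance carries it to every point.
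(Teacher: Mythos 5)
Your proof is correct, and it takes a genuinely different route from the paper's. The paper works entirely at an arbitrary point $(Ad^*_g r\mu,\hat g)$: it computes the contraction $i_{\xi_{\operatorname{Diag}}}\omega^-_{\mathcal{O}_{\RR^+\mu}}$ in \eqref{ecuatia--doi} and the differential of the component function in \eqref{ecuatia--trei}, both in full generality, and compares them; equivariance of the map is asserted in the statement but never verified in the proof. You instead prove equivariance first, and then use it, together with the $G$-invariance of $\omega^-_{\mathcal{O}_{\RR^+\mu}}$ (the preceding Corollary) and the naturality relation $T\Upsilon_{g_1}\circ\eta_{\operatorname{Diag}}=(Ad_{g_1}\eta)_{\operatorname{Diag}}\circ\Upsilon_{g_1}$, to propagate the momentum-map identity along $G$-orbits; since every point is $g^{-1}\cdot(r\mu,\hat e)$, it then suffices to check the identity at the slice points $(r\mu,\hat e)$, where your substitution into \eqref{ecuatie} reproduces exactly the paper's \eqref{ecuatia--doi} and \eqref{ecuatia--trei} specialized to $g=e$, with all the $Ad_g$-conjugation bookkeeping stripped out. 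The propagation step is valid as you state it (it needs the identity at the base point for \emph{all} $\eta\in\mathfrak{g}$, which you do check, since propagating for a fixed $\eta$ uses the base-point identity for $Ad_{g_1^{-1}}\eta$). What your route buys: the equivariance is actually proved and then exploited, and the pointwise computation is shorter. What the paper's route buys: it is self-contained (no appeal to the invariance Corollary) and records the contraction formula at every point, not only on a slice. Your closing identification of the map as the descent of $J_R$ through the symplectomorphism $\bar J_R$ of Theorem \ref{teorema--sase--unu} is also sound and is the conceptual origin of the proposition.

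One caveat on signs, which is not a gap in your argument but should be stated explicitly: you adopt the convention $\mathbf{d}\langle\Phi,\eta\rangle=\omega^-_{\mathcal{O}_{\RR^+\mu}}(\cdot\,,\eta_{\operatorname{Diag}})$, i.e. $i_{\eta_{\operatorname{Diag}}}\omega^-_{\mathcal{O}_{\RR^+\mu}}=-\mathbf{d}\langle\Phi,\eta\rangle$, whereas the paper elsewhere uses $i_{\xi_M}\omega=dJ^\xi$ (see the proof of Proposition \ref{conform}). Under the paper's convention, the very same computation exhibits $I_{\mathcal{O}_{\RR^+\mu}}$, rather than $-I_{\mathcal{O}_{\RR^+\mu}}$, as the momentum map; this sign wobble is already internal to the paper, whose own proof (showing $X_{-I^\xi}=\xi_{\operatorname{Diag}}$ with $I^\xi$ defined there as $+\langle Ad^*_g r\mu,\xi\rangle$) disagrees by a sign with the statement. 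Your convention is the one under which the statement is literally true, so nothing needs repair; just fix the convention once and keep it consistently.
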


\begin{proof}
Let $\xi$ be an element of $\mathfrak{g}$ and denote by
$I^{\xi}_{\mathcal{O}_
{\RR^+\mu}}:\operatorname{Diag}\left(\mathcal{O}_
{\RR^+\mu}\times\frac{G}{K_\mu}\right)\rightarrow\RR$ the map given
by $(Ad^*_g r\mu,\hat{g})\mapsto\langle Ad^*_g r\mu,\xi\rangle$. The
infinitesimal generator associated to $\xi$ on
$\operatorname{Diag}\left(\mathcal{O}_
{\RR^+\mu}\times\frac{G}{K_\mu}\right)$ is
\begin{align*}
\xi_{\operatorname{Diag}\left(\mathcal{O}_
{\RR^+\mu}\times\frac{G}{K_\mu}\right)}(Ad^*_g r\mu,\hat{g})=\left
.\frac{d}{dt}\right |_{t=0}\left(Ad^*_{\exp (-t\xi)}Ad^*_g
r\mu,\widehat{g\exp (-t\xi)}\right)=\\(ad^*_{-\xi}(Ad^*_g
r\mu),-\hat{\xi}_G(\hat{g})),\,\text{for any}\, (Ad^*_g
r\mu,\hat{g})\in\operatorname{Diag}\left(\mathcal{O}_
{\RR^+\mu}\times\frac{G}{K_\mu}\right).
\end{align*}
Then, for all $\left(ad^*_\eta Ad^*_gr\mu+r_\eta
Ad^*_gr\mu,\hat{\eta}_G(\hat{g})\right)\in T_{(Ad^*_g
r\mu,\hat{g})}\operatorname{Diag}\left(\mathcal{O}_
{\RR^+\mu}\times\frac{G}{K_\mu}\right)$, we obtain that
\begin{align}
\label{ecuatia--doi} i_{\xi_{\operatorname{Diag}\left(\mathcal{O}_
{\RR^+\mu}\times\frac{G}{K_\mu}\right)}}\omega^{-}_{\mathcal{O}_
{\RR^+\mu}}(Ad^*_g r\mu,\hat{g})(ad^*_\eta Ad^*_gr\mu+r_\eta
Ad^*_gr\mu,\hat{\eta}_G(\hat{g}))=\\
\omega^{-}_{\mathcal{O}_ {\RR^+\mu}}(Ad^*_g
r\mu,\hat{g})\left(\left(ad^*_{-\xi}(Ad^*_g
r\mu),-\hat{\xi}_G(\hat{g})\right),\left(ad^*_\eta Ad^*_gr\mu+r_\eta
Ad^*_gr\mu,\hat{\eta}_G(\hat{g})\right)\right)=\nonumber\\
\langle Ad^*_gr\mu, [\xi,\eta]\rangle-r_\eta\langle
Ad^*_gr\mu,\xi\rangle\nonumber.
\end{align}
On the other hand,
\begin{align}
\label{ecuatia--trei}
 T_{(Ad^*_g r\mu,\hat{g})}I^\xi_{\mathcal{O}_
{\RR^+\mu}}\left(ad^*_\eta Ad^*_gr\mu+r_\eta
Ad^*_gr\mu,\hat{\eta}_G(\hat{g})\right)=\left .\frac{d}{dt}\right
|_{t=0}\langle Ad^*_{\exp
t\eta}e^{tr_\eta}Ad^*_gr\mu,\xi\rangle\\
=-\langle Ad^*_gr\mu, [\xi,\eta]\rangle+r_\eta\langle
Ad^*_gr\mu,\xi\rangle\nonumber .
\end{align}
Equalities \eqref{ecuatia--doi} and \eqref{ecuatia--trei} imply
that $X_{-I^\xi _{\mathcal{O}_
{\RR^+\mu}}}\hspace{-6mm}=\xi_{\operatorname{Diag}\left(\mathcal{O}_
{\RR^+\mu}\times\frac{G}{K_\mu}\right)}$ for all
$\xi\in\mathfrak{g}$. Hence the proof of this proposition is
complete.
\end{proof}

Recall that the symplectic difference of two symplectic manifolds
$(M_i,\omega_i)_{i=1,2}$ is $M_1\circleddash M_2:=(M_1\times
M_2,\pi^*_1\omega_1-\pi^*_2\omega_2)$, where $(\pi_i:M_1\times
M_2\rightarrow M_i)_{i=1,2}$ are the canonical projections. If the
Lie group $G$ acts on both $M_1$ and $M_2$ such that these actions
admit equivariant momentum maps
$(J_i:M_i\rightarrow\mathfrak{g}^*)_{i=1,2}$, then the diagonal
action of $G$ on the symplectic difference $M_1\circleddash M_2$
admits an equivariant momentum map given by
$J_d:=J_1\circ\pi_1-J_2\circ\pi_2:M_1\circleddash
M_2\rightarrow\mathfrak{g}^*$.

The following theorem illustrates the theoretical importance of the
diagonal product $\operatorname{Diag}\left(\mathcal{O}_
{\RR^+\mu}\times\frac{G}{K_\mu}\right)$ in the reduction procedure.
Namely, any ray reduced space can be seen as the symplectic
difference  of the initial manifold and the diagonal product of the
associated ray coadjoint orbit with the quotient of $G$ by the
kernel group.

\begin{te}[Shifting Theorem]
Let the Lie group $G$ act smoothly on the symplectic manifold
$(M,\omega)$ such that it admits an equivariant momentum map
$J:M\rightarrow\mathfrak{g}^*$. Fix $\mu$ an element of the dual Lie
algebra of $G$ and suppose that the hypothesis of Theorem
\ref{reducere--unu} are fulfilled. Then $G$ acts diagonaly on
$M\circleddash\operatorname{Diag}\left(\mathcal{O}_
{\RR^+\mu}\times\frac{G}{K_\mu}\right)$ and its symplectic reduced
space at zero is well defined. Even more,\newline
$\left(M\circleddash\operatorname{Diag}\left(\mathcal{O}_
{\RR^+\mu}\times\frac{G}{K_\mu}\right)\right)_0$ is symplectomorphic
to $M_{\RR^+\mu}$, the ray reduced space at $\mu$ of $M$.
\end{te}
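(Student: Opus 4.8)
The plan is to carry out the classical shifting argument in the ray setting, with the coadjoint orbit replaced by $N:=\operatorname{Diag}\left(\mathcal{O}_{\RR^+\mu}\times\frac{G}{K_\mu}\right)$ and the isotropy group $G_\mu$ replaced by the kernel group $K_\mu$. First I would record the zero level set. By the stated sign convention for symplectic differences, the diagonal momentum map on $M\circleddash N$ is $J_d=J\circ\pi_M-J_N\circ\pi_N$, where $J_N$ is the momentum map of the preceding Proposition; since $J_N(Ad^*_g r\mu,\hat g)=Ad^*_g r\mu$, one gets
$$J_d^{-1}(0)=\{(m,(Ad^*_g r\mu,\hat g))\mid J(m)=Ad^*_g r\mu\}.$$
For $m\in J^{-1}(\RR^+\mu)$ we have $J(m)=r\mu$ with $r>0$, so $(m,(r\mu,\hat e))\in J_d^{-1}(0)$, and I would define the candidate slice map
$$\iota:J^{-1}(\RR^+\mu)\longrightarrow J_d^{-1}(0),\qquad \iota(m):=(m,(J(m),\hat e)).$$
Its first component is the identity, so $\iota$ is an injective immersion, hence an embedding.

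Next I would compare the two quotients. Using the $G$-action on $N$ from the Corollary, $g_1\cdot(Ad^*_g r\mu,\hat g)=(Ad^*_{g_1^{-1}}Ad^*_g r\mu,\widehat{gg_1^{-1}})$, together with equivariance of $J$, I would check that $\iota$ is $K_\mu$-equivariant: for $k\in K_\mu\subset G_\mu$ one has $J(k\cdot m)=Ad^*_{k^{-1}}J(m)=J(m)$ and $\widehat{k^{-1}}=\hat e$ in $G/K_\mu$, whence $\iota(k\cdot m)=k\cdot\iota(m)$. Thus $\pi_0\circ\iota$ is $K_\mu$-invariant, where $\pi_0:J_d^{-1}(0)\to(M\circleddash N)_0$ is the reduction projection, and it descends to a map $\bar\iota:M_{\RR^+\mu}\to(M\circleddash N)_0$. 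Surjectivity of $\bar\iota$ comes from steering an arbitrary $(m,(Ad^*_g r\mu,\hat g))\in J_d^{-1}(0)$ into the image of $\iota$: acting by $g$ sends it to $(g\cdot m,(r\mu,\hat e))=\iota(g\cdot m)$, using $Ad^*_{g^{-1}}Ad^*_g=\operatorname{id}$ and $J(g\cdot m)=Ad^*_{g^{-1}}J(m)=r\mu$. For injectivity, if $g_1\cdot\iota(m_1)=\iota(m_2)$ then the $G/K_\mu$-components force $g_1\in K_\mu$, so $m_2=g_1\cdot m_1$ lies in the same $K_\mu$-orbit as $m_1$. Hence $\bar\iota$ is a bijection.

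The same orbit computation shows the $G$-stabilizer of $\iota(m)$ equals the $K_\mu$-stabilizer of $m$, which is finite by Lemma \ref{lema1} and transversality; as the diagonal $G$-action on $M\times N$ is proper (the $G$-action on $M$ being proper), the zero reduced space is a well-defined orbifold and $\bar\iota$ a diffeomorphism of orbifolds. To see it is symplectic, write $\omega_0$ for the reduced form and $j_0:J_d^{-1}(0)\hookrightarrow M\times N$ for the inclusion, and pull back along $\pi_0\circ\iota=\bar\iota\circ\pi_{\RR^+\mu}$:
$$\pi_{\RR^+\mu}^*\bar\iota^*\omega_0=\iota^*j_0^*(\pi_M^*\omega-\pi_N^*\omega^-_{\mathcal{O}_{\RR^+\mu}})=i_{\RR^+\mu}^*\omega-\sigma^*\omega^-_{\mathcal{O}_{\RR^+\mu}},$$
where $\sigma=\pi_N\circ j_0\circ\iota$, $\sigma(m)=(J(m),\hat e)$. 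Since the $G/K_\mu$-component of $\sigma$ is constant, each $T_m\sigma(v)$ has vanishing second component, hence is represented in the tangent parametrization underlying \eqref{ecuatie} by some $\xi\in\mathfrak{k}_\mu$, for which $ad^*_\xi\mu=0$ and $\langle\mu,\xi\rangle=0$; evaluating \eqref{ecuatie} on two such vectors makes every term vanish, so $\sigma^*\omega^-_{\mathcal{O}_{\RR^+\mu}}=0$. Therefore $\pi_{\RR^+\mu}^*\bar\iota^*\omega_0=i_{\RR^+\mu}^*\omega=\pi_{\RR^+\mu}^*\omega_{\RR^+\mu}$ by Theorem \ref{reducere--unu}, and injectivity of $\pi_{\RR^+\mu}^*$ on forms gives $\bar\iota^*\omega_0=\omega_{\RR^+\mu}$.

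I expect the main obstacle to be the global matching of the two quotients rather than any single computation: one must verify that the $G$-orbit of each point of $J_d^{-1}(0)$ meets $\iota(J^{-1}(\RR^+\mu))$ in exactly one $K_\mu$-orbit, which is precisely where the normality $K_\mu\trianglelefteq G_\mu$ and the explicit action on the $\frac{G}{K_\mu}$-factor are used, and that the smooth (orbifold) structures match across $\bar\iota$. Once this is settled, the symplectic identification collapses to the short vanishing $\sigma^*\omega^-_{\mathcal{O}_{\RR^+\mu}}=0$ and is routine.
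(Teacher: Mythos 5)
Your proposal is correct and takes essentially the same route as the paper's own proof: the same slice map $m\mapsto(m,(J(m),\hat e))$ (the paper writes $(x,(-J(x),\hat e))$ owing to its sign conventions for the momentum map on the diagonal factor), the same well-definedness, injectivity and surjectivity arguments driven by the $\frac{G}{K_\mu}$-component, and the same final step reducing the symplectic identification to the vanishing of $\omega^-_{\mathcal{O}_{\RR^+\mu}}$ on vectors of the form $(r'\mu,0)$ coming from $\mathfrak{k}_\mu$. If anything, your bookkeeping $J_d=J\circ\pi_M-J_N\circ\pi_N$ is internally more consistent than the paper's, and your explicit justification of the orbifold structure and of $\sigma^*\omega^-_{\mathcal{O}_{\RR^+\mu}}=0$ fills in details the paper leaves as "easy to check."
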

\begin{proof}
The symplectic difference
$M\circleddash\operatorname{Diag}\left(\mathcal{O}_
{\RR^+\mu}\times\frac{G}{K_\mu}\right)$ has symplectic form
$\pi^*_1\omega-\pi^*_2\omega^-_{\mathcal{O}_{\RR^+\mu}}$ and
momentum map
$J_d:=J\circ\pi_1+I_{\mathcal{O}_{\RR^+\mu}}\circ\pi_2$. Of course,
$\pi_1:M\circleddash\operatorname{Diag}\left(\mathcal{O}_
{\RR^+\mu}\times\frac{G}{K_\mu}\right)\rightarrow M$ and
$\pi_2:M\circleddash \operatorname{Diag}\left(\mathcal{O}_
{\RR^+\mu}\times\frac{G}{K_\mu}\right)\rightarrow\operatorname{Diag}\left(\mathcal{O}_
{\RR^+\mu}\times\frac{G}{K_\mu}\right)$ are the canonical
projections. It is easy to check that in the hypothesis of Theorem
\ref{reducere--unu}, the $0$-symplectic reduced space is well
defined.

Let $\phi:J^{-1}(\RR^+\mu)\rightarrow M\circleddash
\operatorname{Diag}\left(\mathcal{O}_
{\RR^+\mu}\times\frac{G}{K_\mu}\right)$ be the map defined by $x\in
J^{-1}(\RR^+\mu)\mapsto (x,(-J(x),\hat{e}))$. Denote by $[\phi]$ its
$(K_\mu,G)$-projection
$$
[\phi]:M_{\RR^+\mu}\rightarrow
\left(M\circleddash\operatorname{Diag}\left(\mathcal{O}_
{\RR^+\mu}\times\frac{G}{K_\mu}\right)\right)_0\,\text{,}\,[\phi](\hat{x}):=[x,(-J(x),\hat{e})],
$$
where $[,]$ and $\hat{ }$ denote the $G$ and $K_\mu$-classes,
respectively. This map is well defined. Indeed, let $k$ be an
element of the kernel group of $\mu$. Then,
$[\phi](\widehat{kx})=[kx,(-J(kx), \hat{e})]=[kx,(-k\cdot
J(x),\widehat{k^{-1}})]=[k\cdot(x,(-J(x),\hat{e}))]=[\phi](\hat{x})$,
for any $\hat{x}\in M_{\RR^+\mu}$. To see that $[\phi]$ is
injective, let $\hat{x}_1,\hat{x}_2$ be elements of $M_{\RR^+\mu}$
such that $[x_1,(-J(x_1),\hat{e})]=[x_2,(-J(x_2),\hat{e})]$. Then,
there is $g$ an element of $G$ such that
$(gx_1,(-gJ(x),\hat{g}^{-1})=(x_2,(-J(x_2),\hat{e}))$. It follows
that $g\in K_\mu$ and $gx_1=x_2$. Hence $\hat{x}_1=\hat{x}_2$ and
$[\phi]$ is one-to-one. If $[x,(Ad^*_gr\mu,\hat{g})]$ is an element
of $\left(M\circleddash\operatorname{Diag}\left(\mathcal{O}_
{\RR^+\mu}\times\frac{G}{K_\mu}\right)\right)_0$, then
$J_d(x,(Ad^*_gr\mu,\hat{g}))=J(x)+Ad^*_gr\mu=0$. Therefore, $gx\in
J^{-1}(\RR^+\mu)$,
$$
[\phi](\hat{gx})=[gx,(-J(gx),\hat{e})]=[gx,(-gAd^*_gr\mu,\widehat{gg^{-1}})]=[x,(-J(x),\hat{e})],
$$
and $[\phi]$ is onto. As it is obviously a smooth map, we obtain
that $[\phi]$ is in fact a diffeomorphism with inverse given by
$[x,(Ad^*_gr\mu,\hat{g})]\mapsto [gx].$

To show that $[\phi]$ is also a symplectic map, fix $\hat{x}$ in
$M_{\RR^+\mu}$ and $(v_i)_{i=1,2}$ in $T_x J^{-1}(\RR^+\mu)$. Note
that $T_x (\pi_2\circ\phi)(v_i)$ belongs to $\RR\mu\simeq
T_{J(x)}(\RR^+\mu)$ for each $i=1,2$. Suppose $J(x)=r\mu$ and $(T_x
(\pi_2\circ\phi)(v_i)=r_i\mu)_{i=1,2}$ with $(r_i)_{i=1,2}$ reals.
Then, using the function equalities
$[\phi]\circ\pi_{K_\mu}=\pi_G\circ\phi$ and
$\pi_1\circ\phi=Id_{J^{-1}(\RR^+\mu)}$, we obtain
\begin{align*}
[\phi]^*(\pi^*_1\omega-\pi^*_2\omega^-_{\mathcal{O}_{\RR^+\mu}})_0(\hat{x})(T_x\pi_{K_\mu}
v_1,T_x\pi_{K_\mu}
v_2)=\\
(\pi^*_1\omega-\pi^*_2\omega^-_{\mathcal{O}_{\RR^+\mu}})_0([x,(-J(x),\hat{e})])(T_x
([\phi]\circ\pi_{K_\mu})v_1, T_x ([\phi]\circ\pi_{K_\mu})v_2)=\\
(\pi^*_1\omega-\pi^*_2\omega^-_{\mathcal{O}_{\RR^+\mu}})(\phi(x))(T_x\phi
v_1,T_x\phi v_2)=\\
\omega(x)(T_x (\pi_1\circ\phi)v_1, T_x
(\pi_1\circ\phi)v_2)-\omega^-_{\mathcal{O}_{\RR^+\mu}}(J(x),\hat{e})(T_x(\pi_2\circ\phi)
v_1,T_x(\pi_2\circ\phi) v_2)=\\
i^*_\mu\omega(x)(T_x (i_\mu\circ\pi_1\circ\phi)v_1,T_x
(i_\mu\circ\pi_1\circ\phi)v_2)-\omega^-_{\mathcal{O}_{\RR^+\mu}}(r\mu,\hat{e})(r_1\mu,r_2\mu)=\\
i^*_\mu\omega(x)(T_x (i_\mu\circ\pi_1\circ\phi)v_1,T_x
(i_\mu\circ\pi_1\circ\phi)v_2)=\omega_{\RR^+\mu}(\hat{x})(T_x\pi_{K_\mu}
v_1,T_x\pi_{K_\mu} v_2),
\end{align*}
completing thus the proof of this theorem.
\end{proof}

In the remaining of this section we will study the ray reduced
spaces of the cosphere bundle of the Lie group $G$. Consider the
action of the multiplicative group $\RR^+$ by dilatations on the
fibers of $T^* G\setminus\{0_{T^* G}\}$. The cosphere bundle of $G$,
$S^*G$ is the quotient manifold $(T^* G\setminus\{0_{T^*
G}\})/\RR^+$. Denote by $\pi:T^* G\setminus\{0_{T^*
G}\}\rightarrow S^* (G)$ the canonical projection. Then,
$(\pi, \RR^+, T^* G\setminus\{0_{T^* G}\}, S^*G)$ is a
$\RR^+$-principal bundle. $S^*G$ admits a canonical contact
structure given by the kernel of any one form constructed as the
pull-back of the Liouville form on $T^*G$ through a global section
of the $\RR^+$-principal bundle $(\pi, \RR^+, T^*
G\setminus\{0_{T^* G}\}, S^*G)$. Namely, for every global section
$\sigma:S^*G\rightarrow T^* G\setminus\{0_{T^* G}\}$ the one-form
$\theta_\sigma=\sigma^*\theta$ determines the same contact
structure. Note that $\pi_{\RR^+}^*\theta_\sigma=f_\sigma\theta$,
where $f_\sigma:T^* G\setminus\{0_{T^* G}\}\rightarrow\RR^+$ is a
smooth function with the property that
$f_\sigma(r\alpha_g)=\frac{1}{r}f\sigma(\alpha_g)$ for any
$r\in\RR^+$ and $\alpha_g\in T^*G$. The action by left translations
of $G$ on its cotangent bundle induces a free and proper action on
the copshere bundle given by
$$
g'\cdot\{\alpha_g\}:=\{T^*_{g'g}L_{g^{-1}}\alpha_g\},
$$
for all $\{\alpha_g\}\in S^*G$ and $g'\in G$. Since it is a proper
action which preserves the contact structure, there is always a
global section $\sigma$ such that the action will preserve the
associated contact form $\theta_\sigma$. Then, this action admits an
equivariant momentum map defined by
$$
\langle J_{sL}(\{\alpha_g\}),\xi\rangle
:=\theta_\sigma(\{\alpha_g\}(\xi_{S^*G})(\{\alpha_g\})=f_\sigma(\alpha_g)\alpha_g(\xi_G
(g)),
$$
where $\{\alpha_g\}\in S^*G$ and $\xi\in\mathfrak{g}$. That is,
$J_{sL}(\alpha_g)=f_\sigma(\alpha_g)\alpha_g$, for any
$\{\alpha_g\}\in S^*G$. Here we have briefly recalled the
construction and some of the properties of the cosphere bundle of a
Lie group. For more details the interesting reader is referred to
\cite{dragulete--ornea--ratiu}, \cite{ekholm--etnyre}, and
\cite{ratiu--schmidt}.

Denote by
$\operatorname{Diag}\left(S^*(\mathcal{O}_{\RR^+\mu})\times\frac{G}{K_\mu}\right)$
the diagonal product of the $\pi$-quotient of the ray orbit
of $\mu$ and $\frac{G}{K_\mu}$. The quoteint space
$S^*(\mathcal{O}_{\RR^+\mu})$ is a smooth manifold since the
$\RR^+$-action on $\mathcal{O}_{\RR^+\mu}$ is free and proper. The
map
$$
[g]\in\frac{G}{G_{\RR^+\mu}}\longrightarrow Ad^*_g r\mu
$$
is a diffeomorphism. Define the following one form on
$\operatorname{Diag}\left(S^*(\mathcal{O}_{\RR^+\mu})\times\frac{G}{K_\mu}\right)$
\begin{multline}
\label{forma contact}
 \eta_{\mathcal{O}_{\RR^+\mu}}(\{Ad^*_g
r\mu\},\hat{g})(T_{Ad^*_g
r\mu}\pi_{\RR^+\mu}(ad^*_{\xi}Ad^*_gr\mu+r'Ad^*_g r\mu,
\hat{\xi_G}(\hat{g})):=\\f_\sigma(T^*_gR_{g^{-1}}r\mu)\langle
Ad^*_gr\mu,\xi\rangle,
\end{multline}
for any $(\{Ad^*_g
r\mu\},\hat{g})\in\operatorname{Diag}\left(S^*(\mathcal{O}_{\RR^+\mu})\times\frac{G}{K_\mu}\right)$
and any tangent vector
$$
T_{Ad^*_g r\mu}\pi_{\RR^+\mu}(ad^*_{\xi}Ad^*_gr\mu+r'Ad^*_g r\mu,
\hat{\xi_G}(\hat{g})\in T_{(\{Ad^*_g
r\mu\},\hat{g})}\operatorname{Diag}\left(S^*(\mathcal{O}_{\RR^+\mu})\times\frac{G}{K_\mu}\right).
$$
As we will see in the proof of the following Theorem, the diagonal
manifold \newline
$\left(\operatorname{Diag}\left(S^*(\mathcal{O}_{\RR^+\mu})\times\frac{G}{K_\mu}\right),
\eta_{\mathcal{O}_{\RR^+\mu}}\right)$ is a well defined exact
contact manifold.

\begin{te}
\label{teorema--sase--doi} Let the Lie group $G$ act on its
cosphere bundle $S^*G$ by the lift of left translations on itself.
Suppose $\mu$ is an element of the dual of its Lie algebra with
kernel group $K_\mu$ and the property that
$\ker\mu+\mathfrak{g}_\mu=\mathfrak{g}$, where $\mathfrak{g}_\mu$
is the isotropy algebra of $\mu$ for the coadjoint action. Then
the ray reduced space at $\mu$, $(S^*G)_{\RR^+\mu}$ is well
defined and contactomorphic to
$\left(\operatorname{Diag}\left(S^*(\mathcal{O}_{\RR^+\mu})\times\frac{G}{K_\mu}\right),\eta_{\mathcal{O}_{\RR^+\mu}}\right)$,
where $\eta_{\mathcal{O}_{\RR^+\mu}}$ is the one form define by
\eqref{forma contact}.
\end{te}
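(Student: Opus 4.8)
The plan is to reduce the statement to the cotangent case of Theorem~\ref{teorema--sase--unu} by passing to symplectic cones, leaving only the explicit contact form \eqref{forma contact} to be checked by hand. First I would establish that $(S^*G)_{\RR^+\mu}$ is well defined. The lift of left translations to $S^*G$ is free and proper, so the closed subgroup $K_\mu$ acts freely and properly on $J_{sL}^{-1}(\RR^+\mu)$; together with the hypothesis $\ker\mu+\mathfrak{g}_\mu=\mathfrak{g}$ and the transversality furnished by the contact analogue of Lemma~\ref{lema1}, this places us in the setting of Willett's contact reduction, so $J_{sL}^{-1}(\RR^+\mu)/K_\mu$ is a smooth exact contact manifold whose reduced contact form $\eta_{\RR^+\mu}$ is characterised by $\pi_{\RR^+\mu}^*\eta_{\RR^+\mu}=i_{\RR^+\mu}^*\theta_\sigma$.

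Next I would obtain the underlying diffeomorphism from the cone construction. The symplectic cone of $(S^*G,\theta_\sigma)$ is $T^*G\setminus\{0_{T^*G}\}$, and since $\mu\neq 0$ the set $J_L^{-1}(\RR^+\mu)$ misses the zero section; hence the cone/reduction compatibility (Lemma~\ref{lema4} at the symplectic--contact level, exactly as in the proof of Corollary~\ref{red--raza--cotang}) identifies $\mathcal{C}\big((S^*G)_{\RR^+\mu}\big)$ with $(T^*G)_{\RR^+\mu}$. By Theorem~\ref{teorema--sase--unu} the latter is symplectomorphic to $\left(\operatorname{Diag}\left(\mathcal{O}_{\RR^+\mu}\times\frac{G}{K_\mu}\right),\omega^-_{\mathcal{O}_{\RR^+\mu}}\right)$, and this diagonal product is visibly the symplectic cone of $\operatorname{Diag}\left(S^*(\mathcal{O}_{\RR^+\mu})\times\frac{G}{K_\mu}\right)$: the $\RR^+$-action is scaling $r\mu\mapsto sr\mu$ in the cone factor, it preserves the diagonal since the orbit projection $\mathcal{O}_{\RR^+\mu}\to G/G_{\RR^+\mu}$ is $\RR^+$-invariant, and the quotient of the diagonal is the diagonal of the quotients. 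All $\RR^+$-actions correspond under these identifications, so passing to the bases of the cones yields a diffeomorphism
$$
\bar{J}_{sR}:(S^*G)_{\RR^+\mu}\longrightarrow\operatorname{Diag}\left(S^*(\mathcal{O}_{\RR^+\mu})\times\frac{G}{K_\mu}\right),\qquad \bar{J}_{sR}([\{\alpha_g\}])=(\{Ad^*_g r\mu\},\hat{g}),
$$
where $\alpha_g=T^*_gR_{g^{-1}}(r\mu)$ is a representative of the ray. As in Theorem~\ref{teorema--sase--unu}, well-definedness and injectivity rest on the inclusion $K_\mu\subset G_{\RR^+\mu}$, while surjectivity is immediate.

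The main obstacle is the final identification of the reduced contact form with \eqref{forma contact}, which I would carry out directly, mimicking the identity $\theta(X^{\xi_i})=J^{\xi_i}_R$ of \eqref{ecuatia unu}. Let $J_{sR}$ denote the contact momentum map for right translations, so that $J_{sR}(\{\alpha_g\})=f_\sigma(\alpha_g)J_R(\alpha_g)$ and, on the representative $\alpha_g=T^*_gR_{g^{-1}}(r\mu)$ with $J_R(\alpha_g)=Ad^*_g r\mu$,
$$
\langle J_{sR}(\{\alpha_g\}),\xi\rangle = f_\sigma(T^*_gR_{g^{-1}}r\mu)\,\langle Ad^*_g r\mu,\xi\rangle .
$$
Since $\pi_{\RR^+\mu}^*\eta_{\RR^+\mu}=i_{\RR^+\mu}^*\theta_\sigma$ and the infinitesimal generators of the $G$-action on $S^*G$ are $\pi$-related to their images on the diagonal manifold, evaluating $\eta_{\RR^+\mu}$ transported by $\bar{J}_{sR}^{-1}$ on the projected generator $T_{Ad^*_g r\mu}\pi_{\RR^+\mu}(ad^*_\xi Ad^*_g r\mu+r'Ad^*_g r\mu,\hat{\xi}_G(\hat{g}))$ returns precisely the right-hand side of \eqref{forma contact}. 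The homogeneity $f_\sigma(r\alpha_g)=\frac{1}{r}f_\sigma(\alpha_g)$ is what makes $\theta_\sigma$, and hence this pairing, descend correctly to the cosphere bundle, and it is exactly this section-dependent weight $f_\sigma$---absent in the purely symplectic computation of Theorem~\ref{teorema--sase--unu}---that demands the careful bookkeeping and constitutes the only real difficulty. I would close by noting that a different global section $\sigma'$ rescales $\theta_\sigma$ by a positive function and hence changes $\eta_{\mathcal{O}_{\RR^+\mu}}$ only within its contact class, so the reduced exact contact structure, and with it the contactomorphism $\bar{J}_{sR}$, is independent of the choice of $\sigma$.
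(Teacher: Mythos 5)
Your proposal is correct and takes essentially the same route as the paper: the paper's maps $\phi:(S^*G)_{\RR^+\mu}\rightarrow\frac{(T^*G)_{\RR^+\mu}}{\RR^+}$ and $\bar{J}_{R^+}$ are precisely your ``pass to the bases of the cones'' step, i.e.\ both proofs factor the diffeomorphism through Theorem \ref{teorema--sase--unu} using the compatibility of ray reduction with the $\RR^+$-action relating $T^*G\setminus\{0_{T^*G}\}$ and $S^*G$. The decisive final step is also identical: a direct pullback computation of the reduced contact form through the projections, using $\theta(X^{\xi})=J^{\xi}_R$ from \eqref{ecuatia unu} together with $\pi_{\RR^+}^*\theta_\sigma=f_\sigma\theta$, yielding exactly the weight $f_\sigma(T^*_gR_{g^{-1}}r\mu)\langle Ad^*_g r\mu,\xi\rangle$ of \eqref{forma contact}.
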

\begin{proof}
First note that since the $K_\mu$ and $\RR^+$-actions commute we
have the equality
$J^{-1}_{sL}(\RR^+\mu)=\pi(J^{-1}_L(\RR^+\mu))$. Even more
the maps,
$$
\phi:(S^*G)_{\RR^+\mu}\rightarrow\frac{(T^*G)_{\RR^+\mu}}{\RR^+}\quad\text{and}\quad\bar{J}_{R^+}:\frac{(T^*G)_{\RR^+\mu}}{\RR^+}
\rightarrow\operatorname{Diag}\left(S^*(\mathcal{O}_{\RR^+\mu})\times\frac{G}{K_\mu}\right)
$$
defined by $\phi([\{\alpha_g\}]):=\{[\alpha_g]\}$ and
$\bar{J}_{R^+}(\{[T^*_gR_{g^{-1}}r\mu:=(\{Ad^*_gr\mu\},\hat{g})$,
for any $\alpha_g$ in $J^{-1}_L(\RR^+\mu)$ are diffeomorphisms. Let
$\Psi:(S^*G)_{\RR^+\mu}\rightarrow\operatorname{Diag}\left(S^*(\mathcal{O}_{\RR^+\mu})\times\frac{G}{K_\mu}\right)$
be the map $\Psi:=\bar{J}_{R^+}\circ\phi$. It is obviously a
diffeomorphism with inverse given by
$$
\Psi^{-1}:\operatorname{Diag}\left(S^*(\mathcal{O}_{\RR^+\mu})\times\frac{G}{K_\mu}\right)\rightarrow(S^*G)_{\RR^+\mu}\,\text{,}\,
\Psi^{-1}(\{Ad^*_gr\mu\},\hat{g})=[\{T^*_gR_{g^{-1}}r\mu\}],
$$
for any $g\in G$ and $r\in\RR^+$. Denote by $\eta_{\RR^+\mu}$ the
reduced contact form of $(S^*G)_{\RR^+\mu}$. Then,
\begin{multline}
(\Psi^{-1})^* (\eta_{\RR^+\mu})(\{Ad^*_g r\mu\},\hat{g})(T_{Ad^*_g
r\mu}\pi_{\RR^+\mu}(ad^*_{\xi} Ad^*_g r\mu+r'Ad^*_g
r\mu,\hat{\xi}_G (\hat{g}))=\\
(\pi^s_{\RR^+\mu})^*(\{T^*_gR_{g^{-1}}r\mu\})\left(\left
.\frac{d}{dt}\right |_{t=0} \pi_{\RR^+}(Ad^*_{g\exp t\xi}
e^{tr'}Ad^*_g r\mu, \widehat{g\cdot\exp
t\xi})\right)=\\(\pi^s_{\RR^+\mu})^*(\{T^*_gR_{g^{-1}}r\mu\})\left(T_{T^*_gR_{g^{-1}}r\mu}\pi_{\RR^+}
(X^{\xi}(T^*_gR_{g^{-1}}r\mu)) \right)=\\\theta_\sigma
(\{T^*_gR_{g^{-1}}r\mu\})\left(T_{T^*_gR_{g^{-1}}r\mu}\pi_{\RR^+}
(X^{\xi}(T^*_gR_{g^{-1}}r\mu)) \right)
=\\(\pi^*_{\RR^+}\theta)(T_{T^*_g
R_{g^{-1}}r\mu})(X^{\xi}(T^*_gR_{g^{-1}}r\mu))=f_\sigma(T^*_gR_{g^{-1}}r\mu)\theta
(T^*_gR_{g^{-1}}r\mu)(X^{\xi}(T^*_gR_{g^{-1}}r\mu))\\=f_\sigma(T^*_gR_{g^{-1}}r\mu)J^{\xi}_R
(T^*_gR_{g^{-1}}r\mu)=f_\sigma(T^*_gR_{g^{-1}}r\mu)\langle Ad^*_g
r\mu, \xi\rangle=\\\eta_{\mathcal{O}_{\RR^+\mu}}(\{Ad^*_g
r\mu\},\hat{g})(T_{Ad^*_g
r\mu}\pi_{\RR^+\mu}(ad^*_{\xi}Ad^*_gr\mu+r'Ad^*_g r\mu,
\hat{\xi_G}(\hat{g})),
\end{multline}
for all $\xi\in\mathfrak{g}$ and $g\in G$. Hence,
$\eta_{\mathcal{O}_{\RR^+\mu}}$ is a contact form and $\Psi$ the
required contactomorphism.
\end{proof}

\begin{co}
In the hypothesis of Theorem \ref{teorema--sase--doi}, the contact
form $\eta_{\mathcal{O}_ {\RR^+\mu}}$ defined by \ref{forma contact}
is $G$-invariant with respect to the following action
$$
g_1\cdot (\{Ad^*_g r\mu\},\hat{g}):=\left(\{Ad^*_{g_1^{-1}}Ad^*_g
r\mu\},\widehat{gg_1^{-1}}\right),
$$
for each $g_1$ in $G$ and $({Ad^*_g
r\mu},\hat{g})\in\operatorname{Diag}\left(S^*(\mathcal{O}_
{\RR^+\mu})\times\frac{G}{K_\mu}\right)$.
\end{co}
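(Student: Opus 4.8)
The plan is to establish $g_1$-invariance of $\eta_{\mathcal{O}_{\RR^+\mu}}$ by a direct computation running exactly parallel to the proof that $\omega^{-}_{\mathcal{O}_{\RR^+\mu}}$ is $G$-invariant. I would fix $g_1\in G$ and a point $x=(\{Ad^*_g r\mu\},\hat g)$, and write a generic tangent vector as $v_\xi:=T_{Ad^*_g r\mu}\pi_{\RR^+\mu}\bigl(ad^*_\xi Ad^*_g r\mu+r'Ad^*_g r\mu,\hat{\xi}_G(\hat g)\bigr)$ with $\xi\in\mathfrak{g}$, $r'\in\RR$. Differentiating the action at $t=0$ along the curve $t\mapsto(\{Ad^*_{\exp t\xi\,g_1^{-1}}e^{tr'}Ad^*_g r\mu\},\widehat{g\exp t\xi})$, and using formula \eqref{derivare--curbe--coadjuncta} to differentiate the coadjoint curve, one gets (just as in the symplectic case) that the base point moves to $g_1\cdot x=(\{Ad^*_{gg_1^{-1}}r\mu\},\widehat{gg_1^{-1}})$, while the generator direction is carried to $Ad_{g_1}\xi$ and the $\RR^+$-parameter $r'$ is unchanged; that is, $T_x(g_1\cdot)\,v_\xi=v_{Ad_{g_1}\xi}$, now based at $g_1\cdot x$.

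Next I would evaluate the form at the translated data via its definition \eqref{forma contact}, obtaining
$$\eta_{\mathcal{O}_{\RR^+\mu}}(g_1\cdot x)\bigl(T_x(g_1\cdot)v_\xi\bigr)=f_\sigma\bigl(T^*_{gg_1^{-1}}R_{g_1g^{-1}}r\mu\bigr)\,\langle Ad^*_{gg_1^{-1}}r\mu,\,Ad_{g_1}\xi\rangle.$$
The verification then splits into two scalar factors. The coadjoint pairing is handled exactly as in the invariance computation for $\omega^{-}_{\mathcal{O}_{\RR^+\mu}}$: by the defining property of the coadjoint action one has $\langle Ad^*_{gg_1^{-1}}r\mu,Ad_{g_1}\xi\rangle=\langle r\mu,Ad_{gg_1^{-1}}Ad_{g_1}\xi\rangle=\langle r\mu,Ad_g\xi\rangle=\langle Ad^*_g r\mu,\xi\rangle$, which is precisely the pairing appearing in $\eta_{\mathcal{O}_{\RR^+\mu}}(x)(v_\xi)$. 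This factor is routine bookkeeping, identical to the symplectic corollary.

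The genuine content, and the main obstacle, is to show that the conformal factor is unchanged, i.e. $f_\sigma(T^*_{gg_1^{-1}}R_{g_1g^{-1}}r\mu)=f_\sigma(T^*_gR_{g^{-1}}r\mu)$. The key observation I would use is that $T^*_{gg_1^{-1}}R_{g_1g^{-1}}r\mu$ is exactly the image of $T^*_gR_{g^{-1}}r\mu$ under the cotangent lift of right translation by $g_1$; this lift commutes with the left $K_\mu$-action and with the fibrewise $\RR^+$-dilations, and it preserves the Liouville form $\theta$ (as every cotangent lift does). Combining the defining relation $\pi_{\RR^+}^*\theta_\sigma=f_\sigma\theta$ with the homogeneity $f_\sigma(r\alpha_g)=\tfrac1r f_\sigma(\alpha_g)$ and the $G$-invariance of $\theta_\sigma$, I would conclude that $f_\sigma$ is constant along the level set $J_L^{-1}(\RR^+\mu)=\{T^*_hR_{h^{-1}}r\mu : h\in G,\ r\in\RR^+\}$, which yields the required equality.

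Alternatively, one can avoid this computation altogether by invoking Theorem \ref{teorema--sase--doi}: since $\eta_{\mathcal{O}_{\RR^+\mu}}=(\Psi^{-1})^*\eta_{\RR^+\mu}$ and $\Psi$ intertwines the $G$-action above with the residual right-translation action on $(S^*G)_{\RR^+\mu}$, invariance of $\eta_{\mathcal{O}_{\RR^+\mu}}$ is equivalent to invariance of the reduced contact form $\eta_{\RR^+\mu}$, which descends from the bi-invariant Liouville structure. Either route reduces to the same delicate point, namely that the residual right-translation action preserves the reduced contact form; once this is in hand, both scalar factors match those of $\eta_{\mathcal{O}_{\RR^+\mu}}(x)(v_\xi)$, giving $(g_1\cdot)^*\eta_{\mathcal{O}_{\RR^+\mu}}=\eta_{\mathcal{O}_{\RR^+\mu}}$, so the form is $G$-invariant.
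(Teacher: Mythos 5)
The paper itself prints no proof of this corollary, so the natural benchmark is the proof it gives for the symplectic analogue (the corollary following Theorem \ref{teorema--sase--unu}), and your computation is modeled on it in the right way: the transformation rule $T_x(g_1\cdot)v_\xi=v_{Ad_{g_1}\xi}$ and the pairing identity $\langle Ad^*_{gg_1^{-1}}r\mu,Ad_{g_1}\xi\rangle=\langle Ad^*_g r\mu,\xi\rangle$ are exactly the two steps of that proof, and you correctly isolate the genuinely new point in the contact case: by \eqref{forma contact}, $G$-invariance of $\eta_{\mathcal{O}_{\RR^+\mu}}$ holds if and only if
$$
f_\sigma(T^*_{gg_1^{-1}}R_{g_1g^{-1}}r\mu)=f_\sigma(T^*_gR_{g^{-1}}r\mu)\quad\text{for all }g,g_1\in G,\ r\in\RR^+.
$$
The gap is that your justification of this identity fails at both of its steps.

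First, $f_\sigma$ cannot be ``constant along the level set $J_L^{-1}(\RR^+\mu)$'': that set is invariant under the fibrewise dilations, and the homogeneity $f_\sigma(r\alpha_g)=\frac{1}{r}f_\sigma(\alpha_g)$ which you yourself invoke forces $f_\sigma$ to be nonconstant along every ray it contains. What is actually needed is that $F(h):=f_\sigma(T^*_hR_{h^{-1}}\mu)$ be constant in $h\in G$, i.e.\ that $f_\sigma$ be constant on each single level $J_L^{-1}(r\mu)$, which is one orbit of the cotangent lifts of right translations. Second, this corrected statement does not follow from the ingredients you list. Writing $\rho_{g_1}$ for the cotangent lift of $R_{g_1}$, one indeed has $\rho_{g_1}^*\theta=\theta$, but then $\rho_{g_1}^*(f_\sigma\theta)=(f_\sigma\circ\rho_{g_1})\,\theta$, so ``$\rho_{g_1}$ preserves $\theta_\sigma$'' is precisely the statement to be proved, not a tool for proving it. The invariance of $\theta_\sigma$ granted in the paper is under the lifted \emph{left} action only; it is equivalent to $f_\sigma=\phi\circ J_R$ for some positive function $\phi$ on $\mathfrak{g}^*$ homogeneous of degree $-1$, whence $F(h)=\phi(Ad^*_h\mu)$. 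Left invariance therefore only makes $F$ invariant under left multiplication by $G_\mu$; it imposes no relation between $F(h)$ and $F(hg_1)$ for general $g_1$, and choosing such a $\phi$ nonconstant on the coadjoint orbit $\mathcal{O}_\mu$ produces a left-invariant contact form $\theta_\sigma$ for which your key identity, hence the asserted invariance of $\eta_{\mathcal{O}_{\RR^+\mu}}$, fails. So the step needs an additional normalization of the section $\sigma$, which your proof (and, to be fair, the paper's statement) leaves implicit: for instance $\phi$ constant on $\mathcal{O}_\mu$ (which requires the orbit to meet each positive ray at most once), or $f_\sigma=1/\Vert J_L\Vert$ for an $Ad^*$-invariant norm when one exists (e.g.\ $G$ compact), in which case right invariance is automatic because $J_L$ is right invariant. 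Your alternative route through $\Psi$ and the reduced contact form does not evade this: as you concede, it ``reduces to the same delicate point'', and ``descends from the bi-invariant Liouville structure'' is not an argument, because the form being reduced is $\theta_\sigma$, not the bi-invariant $\theta$.
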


\section{Ray quotients of K\"ahler and Sasakian-Einstein Manifolds}
\label{sectiunea--sapte}

In this section we will study the behavior of K\"ahler-Einstein
metrics of positive Ricci curvature with respect to symmetries.
Namely, in the hypothesis of Theorem \ref{reducere--doi} and using
techniques  developed in \cite{futaki--unu} and \cite{futaki--doi}
by A. Futaki, if $M$ is a Fano manifold and $\omega$ represents
its first Chern class we will show how to compute the Ricci form
of the reduced space in terms of the reduced K\"ahler form
$\omega_{\RR^+\mu}$ and data on $J^{-1}(\RR^+\mu)$ and the kernel
group $K_\mu$. As a corollary we will obtain that if $M$ is a Fano
manifold and the symplectic ray reduction of Theorem
\ref{reducere--unu} can be performed, then the ray reduced
symplectic manifold $M_{\RR^+\mu}$ will also be Fano. Even more,
if $M$ is a compact K\"ahler-Einstein manifold of positive Ricci
curvature, then $M_{\RR^+\mu}$ is Einstein if and only if the norm
of a certain multi vector field defined using the kernel algebra
$\mathfrak{k}_\mu$ and the algebra $\mathfrak{m}$ defined in
\eqref{descompunere--m} is constant on $J^{-1}(\RR^+\mu)$.

Recall that the Ricci form $\rho$ of a compact K\"ahler manifold
$(M,\operatorname{g},\omega)$ is a real closed $(1,1)$-form whose
class in the de Rham cohomology group $H^2_{DR}(M)$ defines the
first Chern class of the manifold. Suppose that the K\"ahler form
$\omega$ represents the first Chern class of
$M$. Then, applying the local $i\partial\bar{\partial}$-Lemma
(see, for instance \cite{moroianu}), we obtain that there is a
smooth real function $f$ such that
$\rho-\omega=\frac{\sqrt{-1}}{2\pi}
\partial\bar{\partial}f$. If the compact Lie group $G$ acts on $M$ by
holomorphic isometries, then there is always an associated
equivariant momentum map $J:M\rightarrow\mathfrak{g}^*$. We will
now recall its construction.

By Theorem $2.4.3$ in \cite{futaki--unu} there is an isomorphism
between the complex Lie algebra of holomorphic vector fields on
$M$ and the set of all complex-valued functions $u$ satisfying
$\Delta_f u-u=0$. This isomorphism is given by $u\mapsto
\operatorname{grad}u$. Here, $\Delta_f$ is the differential
operator given by

$$u\mapsto \Delta u-\nabla^i u\nabla_i f=\Delta
u-\operatorname{g}^{i\bar{j}}\frac{\partial u}{\partial
\bar{z}_j}\partial_{z_i}f=\Delta u-\operatorname{grad}u(f),$$

with $\Delta$ the complex Laplacian, $\nabla$ the covariant
derivative associated to $\operatorname{g}$ and $(z_i)_i$ local
holomorphic coordinates. Then, the infinitesimal isometries
associated to the elements of the Lie algebra $\mathfrak{g}$ embed
in the space of holomorphic vector fields on $M$ as follows:
assign to each $\xi\in\mathfrak{g}$ the holomorphic vector field
$\xi'_M:=\frac{1}{2}(\xi_M-\sqrt{-1}\mathcal{C}_{\operatorname{g}}\xi_M)$.
$\mathcal{C}$ denotes the complex structure of
$(M,\operatorname{g})$. In other words, all the infinitesimal
isometries are real holomorphic vector fields. Therefore, there is
a smooth complex function $u_{\xi'_M}$ with
$\operatorname{grad}u_{\xi'_M}=\xi'_M$.

\begin{lm}
For every $\xi$ element of the Lie algebra $\mathfrak{g}$, the
above defined function $u_{\xi'_M}$ is purely imaginary.
\end{lm}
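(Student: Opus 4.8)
The plan is to write $u := u_{\xi'_M} = a + \sqrt{-1}\,b$ with $a = \operatorname{Re}u$ and $b = \operatorname{Im}u$ real functions, and to prove that $a \equiv 0$. The argument splits into three movements: first, extract from the defining relation $\operatorname{grad}u = \xi'_M$ a formula for the real Killing field $\xi_M$ in terms of the Riemannian gradients $\nabla a$ and $\nabla b$; second, use that $\xi_M$ is Killing to force $a$ to be constant; and third, feed this back into the defining equation $\Delta_f u - u = 0$ to pin the constant to zero.

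For the first movement I would start from
$$\xi_M = \xi'_M + \overline{\xi'_M} = \operatorname{grad}u + \overline{\operatorname{grad}u},$$
which holds because $\xi_M$ and $\mathcal{C}\xi_M$ are real and $\xi'_M = \tfrac{1}{2}(\xi_M - \sqrt{-1}\,\mathcal{C}\xi_M)$. Writing $\operatorname{grad}u = \operatorname{grad}a + \sqrt{-1}\,\operatorname{grad}b$ and recalling that for a real function $h$ the $(1,0)$-gradient $\operatorname{grad}h$ is the $(1,0)$-part of the Riemannian gradient, so that $\operatorname{grad}h + \overline{\operatorname{grad}h} = \nabla h$ while $\operatorname{grad}h - \overline{\operatorname{grad}h} = -\sqrt{-1}\,\mathcal{C}\nabla h$, a short computation collapses the four terms to the key identity
$$\xi_M = \nabla a + \mathcal{C}\nabla b.$$
I expect this identity to be the main obstacle, in the sense that it is the only place where the normalization conventions (the pairing $\omega(X,Y) = \operatorname{g}(\mathcal{C}X,Y)$, the precise definition of $\operatorname{grad}$, and the reality of $\xi_M$) must be tracked carefully; everything afterwards is soft.

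For the second movement I would observe that $\mathcal{C}\nabla b$ is, up to sign, the Hamiltonian vector field of $b$: indeed $i_{\mathcal{C}\nabla b}\,\omega = -db$, so $L_{\mathcal{C}\nabla b}\,\omega = 0$ and hence $\mathcal{C}\nabla b$ preserves the volume form $\omega^n$, i.e. $\operatorname{div}(\mathcal{C}\nabla b) = 0$. On the other hand $\xi_M$ is a Killing field, hence divergence-free for the Riemannian volume, which on a K\"ahler manifold agrees with $\omega^n$. Subtracting, $\operatorname{div}(\nabla a) = \Delta_{\operatorname{g}}a = 0$, and since $M$ is compact a harmonic function is constant; thus $a$ is constant.

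Finally, for the third movement, since $a$ is constant we have $\operatorname{grad}a = 0$ and $\Delta a = 0$, so $\Delta_f a = \Delta a - \operatorname{grad}a(f) = 0$. Splitting the defining equation $\Delta_f u - u = 0$ into real and imaginary parts — legitimate because $\Delta_f$ has real coefficients and $a,b$ are real — the real part reads $\Delta_f a - a = -a = 0$. Hence $a \equiv 0$ and $u_{\xi'_M} = \sqrt{-1}\,b$ is purely imaginary, as claimed. This is exactly the statement needed to make $J^\xi := \sqrt{-1}\,u_{\xi'_M}$ a genuine real-valued component of the equivariant momentum map.
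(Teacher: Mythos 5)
Your first two movements are correct, and they genuinely diverge from the paper's proof in an attractive way: the identity $\xi_M=\nabla a+\mathcal{C}\nabla b$, combined with the facts that $\xi_M$ (Killing) and $\mathcal{C}\nabla b$ (Hamiltonian, since $i_{\mathcal{C}\nabla b}\,\omega=-db$) are both divergence-free, gives $\Delta_{\operatorname{g}}a=0$, hence $a$ constant on the compact connected $M$. This is an elementary, self-contained substitute for the result the paper cites from Calabi (that the real part of a holomorphic gradient field is Killing if and only if $u+\overline{u}$ is constant); the paper then pins down the constant via the conjugation identity $\Delta(u+\overline{u})=u+\overline{u}$.

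The third movement, however, contains a genuine flaw. The operator $\Delta_f$ does \emph{not} have real coefficients: its first-order term $-\operatorname{grad}(\cdot)(f)$ involves the $(1,0)$-gradient $\operatorname{g}^{i\bar j}\partial_{\bar z_j}(\cdot)\,\partial_{z_i}$, a complex vector field, and indeed $\overline{\Delta_f u}=\Delta\overline{u}-\overline{\operatorname{grad}u}(f)\neq\Delta_f\overline{u}$ in general. (If $\Delta_f$ were a real operator, the paper's proof would have no need to track the cross terms $\pm\frac{\sqrt{-1}}{2}(\mathcal{C}\xi_M)(f)$ before adding the equation to its conjugate.) With your own conventions, $\operatorname{Re}\bigl(\operatorname{grad}u(f)\bigr)=\frac{1}{2}\bigl(\nabla a(f)+(\mathcal{C}\nabla b)(f)\bigr)=\frac{1}{2}\,\xi_M(f)$, so the real part of $\Delta_f u-u=0$ actually reads
\begin{equation*}
\Delta a-\tfrac{1}{2}\,\xi_M(f)-a=0,
\end{equation*}
and after your second movement you only obtain $\xi_M(f)=-2a$, not $a=0$. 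The missing ingredient is precisely the first line of the paper's proof: since $G$ acts by holomorphic isometries, the Ricci potential is invariant, $\xi_M(f)=0$. Adding this closes your argument immediately (with $a$ constant, $a=-\tfrac{1}{2}\xi_M(f)=0$); alternatively, integrate $\xi_M(f)=-2a$ over $M$, using that Killing fields are divergence-free so $\int_M\xi_M(f)\operatorname{vol}_{\operatorname{g}}=0$, to force $a=0$. As written, though, the step ``the real part reads $\Delta_f a-a=-a=0$'' rests on a false claim about $\Delta_f$, and that is a real gap.
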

\begin{proof}
Since $G$ acts by holomorphic isometries, $\xi_M(f)=0$ and we have
$$
\Delta_f u_{\xi'_M}=\Delta u_{\xi'_M}-\xi'_M (f)=\Delta
u_{\xi'_M}+\frac{\sqrt{-1}}{2}\mathcal{C}(\xi_M)f\,\text{and}\,\bar{\Delta_f
u_{\xi'_M}}=\bar{\Delta
u_{\xi'_M}}-\frac{\sqrt{-1}}{2}\mathcal{C}(\xi_M)f.
$$
Using the fact that $\Delta_f u_{\xi'_M}-u_{\xi'_M}=0$ it follows
that
\begin{equation}
\label{ecuatie--laplasian} \Delta
(u_{\xi'_M}+\bar{u_{\xi'_M}})=u_{\xi'_M}+\bar{u}_{\xi'_M}.
\end{equation}
On the other hand, it is well known that on a complex connected
Riemannian manifold, if $X$, the gradient of a function $u$ is a
holomorphic vector field, then it is a Killing vector field if and
only if $u+\bar{u}$ is constant. In particular, if $u$ is purely
imaginary, then the real part of $X$ is a Killing vector field. For
a proof of this, see for instance \cite{calabi}. Applying this to
$X=\operatorname{grad}(u_{\xi'_M}+\bar{u}_{\xi'_M})$ we obtain
that $u_{\xi'_M}+\bar{u}_{\xi'_M}$ is a constant function. Hence,
\eqref{ecuatie--laplasian} implies that
$u_{\xi'_M}+\bar{u}_{\xi'_M}=0$.
\end{proof}

\begin{pr}
Let $M$ be a compact complex manifold of positive first Chern
class and dimension $n$. Choose any K\"ahler metric
$\operatorname{g}$ which represents the first Chern class and
suppose the Lie group $G$ acts on $(M,\operatorname{g})$ by
holomorphic isometries. Then the map
$J:M\rightarrow\mathfrak{g}^*$, $\langle
J(x),\xi\rangle:=\frac{\sqrt{-1}}{2\pi}u_{\xi'_M}$ defines an
equivariant momentum map for the action of $G$ on $M$.
\end{pr}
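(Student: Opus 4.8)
The plan is to verify the two defining conditions of a momentum map --- the Hamiltonian condition $dJ^\xi = i_{\xi_M}\omega$ for every $\xi\in\mathfrak{g}$ (in the sign convention $\omega(\xi_M,\cdot)=dJ^\xi(\cdot)$ used elsewhere in the paper) and $\operatorname{Ad}^*$-equivariance --- after first observing that $J$ genuinely takes values in the real dual $\mathfrak{g}^*$. The latter is immediate from the preceding lemma: since $u_{\xi'_M}$ is purely imaginary, $J^\xi := \langle J(\cdot),\xi\rangle = \frac{\sqrt{-1}}{2\pi}u_{\xi'_M}$ is real-valued, and it depends linearly on $\xi$ because $\xi\mapsto\xi'_M\mapsto u_{\xi'_M}$ is linear.

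For the Hamiltonian condition I would work in local holomorphic coordinates $(z_i)$. The key structural fact is that $\xi'_M=\frac{1}{2}(\xi_M-\sqrt{-1}\mathcal{C}\xi_M)$ is exactly the $(1,0)$-part of the real field $\xi_M$, so that $\xi_M = \xi'_M + \overline{\xi'_M}$, and that by construction $\operatorname{grad}u_{\xi'_M}=\xi'_M$, i.e. $\xi'_M = \operatorname{g}^{i\bar j}(\partial_{\bar z_j}u_{\xi'_M})\partial_{z_i}$. Contracting the fundamental form with a $(1,0)$-field gives $i_{\xi'_M}\omega=\sqrt{-1}\,\bar\partial u_{\xi'_M}$ and, conjugating, $i_{\overline{\xi'_M}}\omega=-\sqrt{-1}\,\partial\overline{u_{\xi'_M}}$ (in the normalization in which the fundamental form equals $\sqrt{-1}\,\operatorname{g}_{i\bar j}dz^i\wedge d\bar z^j$; the factor $2\pi$ in the definition of $J$ is exactly what accounts for the passage to the $c_1$-normalized Kähler form $\omega$). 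Adding these and invoking $\overline{u_{\xi'_M}}=-u_{\xi'_M}$ collapses the $\partial$ and $\bar\partial$ terms into $\sqrt{-1}(\partial+\bar\partial)u_{\xi'_M}=\sqrt{-1}\,du_{\xi'_M}$, whence $i_{\xi_M}\omega=\frac{\sqrt{-1}}{2\pi}du_{\xi'_M}=dJ^\xi$.

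For equivariance, since $G$ acts by holomorphic isometries it preserves both $\omega$ and the Ricci form $\rho$, hence preserves $\rho-\omega=\frac{\sqrt{-1}}{2\pi}\partial\bar\partial f$; together with the normalization $\xi_M(f)=0$ already exploited above this shows that $f$ is $G$-invariant and that the whole construction $\xi\mapsto\xi'_M\mapsto u_{\xi'_M}$ is natural with respect to the $G$-action. Concretely, $g_*\xi_M=(\operatorname{Ad}_g\xi)_M$, and $g^*$ commutes with $\operatorname{grad}$ and with the $G$-invariant operator $\Delta_f$, so $g^*u_{(\operatorname{Ad}_g\xi)'_M}=u_{\xi'_M}$, which is precisely $J(g\cdot x)=\operatorname{Ad}^*_{g^{-1}}J(x)$. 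Equivalently, one may check $\{J^\xi,J^\eta\}=J^{[\xi,\eta]}$ using that $\xi\mapsto\xi'_M$ is a Lie algebra homomorphism into holomorphic vector fields together with the fact that the canonical eigenvalue normalization $\Delta_f u_{\xi'_M}=u_{\xi'_M}$ leaves no freedom to add a constant, so the constant cocycle obstruction vanishes identically.

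I expect the main obstacle to be bookkeeping rather than conceptual: keeping the conventions for the complex structure, the $(1,0)$-gradient, the Hamiltonian sign, and especially the factor $2\pi$ relating the metric fundamental form to the $c_1$-normalized $\omega$ all mutually consistent, so that the computation lands on $dJ^\xi$ with exactly the constant in the definition. The one genuinely substantive point is the $G$-invariance of $f$, which rests on $G$ preserving the Ricci form; once that is in hand, equivariance is formal.
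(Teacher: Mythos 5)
Your proposal is correct and takes essentially the same route as the paper: the identical local-coordinate computation $i_{\xi'_M}\omega=\bar\partial J^\xi$ (with the conjugate term supplied by the pure-imaginariness of $u_{\xi'_M}$, equivalently the reality of $J^\xi$) for the Hamiltonian condition, and the same naturality argument for equivariance, namely that $g^*$ commutes with $\operatorname{grad}$ and with $\Delta_f$ so that $g^*u_{\xi'_M}=u_{(\operatorname{Ad}_{g^{-1}}\xi)'_M}$. The only difference is cosmetic: you make explicit why $f$ (hence $\Delta_f$) is $G$-invariant, via preservation of the Ricci form and compactness of $M$, a point the paper asserts without justification.
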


\begin{proof}
In a local holomorphic coordinate system $(z_1,...,z_n)$, the
K\"ahler form associated to $\operatorname{g}$ is given by
$\omega=\frac{\sqrt{-1}}{2\pi}\operatorname{g}_{\alpha\bar{\beta}}dz^\alpha\wedge
dz^{\bar{\beta}}$. Then, for any $\xi$ in $\mathfrak{g}$,
$$
i_{\xi'_M}\omega=i_{\operatorname{grad}
u_{\xi'_M}}\omega
=\frac{i}{2\pi}\operatorname{g}^{\alpha\bar{\beta}}\nabla_{\bar{\beta}}u_{\xi'_M}\operatorname{g}_{\alpha\bar{\gamma}}d\bar{z}^{\gamma}=
\bar{\partial}J^\xi.
$$
and
$$
i_{\xi_M}\omega=i_{(\xi'_M+\bar{\xi}'_M)}\omega_{\operatorname{g}}=
i_{\xi'_M}\omega+\overline{i_{\xi_M}\omega}=d
J^\xi,
$$
proving thus that $J$ is a momentum map. To show the equivariance
of $J$, fix $g\in G$ and $\xi\in\mathfrak{g}$. Observe that the
$G$-action commutes with the operator $\Delta_f$ and that for any
vector field $Y$ of type $(0,1)$ we have
\begin{align*}
\omega(\operatorname{grad}(g^*u_{\xi'_M}),Y)=Y(g^*J^\xi)= (g_*
Y)J^\xi=\omega(\operatorname{grad}u_{\xi'_M},
g_*Y)=\\
\omega(g^{-1}_*\xi'_M,Y)=\omega((ad_{g^{-1}}\xi)'_M,
Y)=\omega(\operatorname{grad} u_{(ad_{g^{-1}}\xi)'_M}, Y).
\end{align*}
Hence, $J$ is also $G$-equivariant.
\end{proof}

Assume that the hypothesis of Theorem \ref{reducere--doi} are
verified for a momentum value $\mu$. Choose $\{\xi_i\}_{i=1,k}$
and $\{\eta_i\}_{i=1,m}$ basis of $\mathfrak{k}_\mu$ and
$\mathfrak{m}$ such that the associated infinitesimal isometries
form an orthogonal frame of the vertical distribution of
$\pi_{\RR^+\mu}:J^{-1}(\RR^+\mu)\rightarrow M_{\RR^+\mu}$ and of
$\mathfrak{m}_M$ respectively. Recall that $\mathfrak{m}_M$ is the
space defined in the decomposition \eqref{eq4}. Denote by
$\xi'\wedge\eta'$ the multi vector
$\xi'_{1M}\wedge...\wedge\xi'_{kM}\wedge\eta'_{1M}\wedge...\wedge\eta'_{mM}$.
We are now ready to state the main theorem of this section.

\begin{te}
\label{curbura--Ricci} Let $(M, \operatorname{g}, \omega)$ be a
Fano K\"ahler manifold with $\omega$ representing its first Chern
class. Let $G$ be a Lie group acting on $M$ by holomorphic
isometries. Suppose that $\mu$ is an element of the dual of the
Lie algebra of $G$ such that the ray reduced space is a well
defined K\"ahler orbifold $(M_{\RR^+\mu},\omega_{\RR^+\mu})$
Assume that the kernel group $K_\mu$ is compact. Then, the Ricci
form of the ray reduced space is given by
\begin{equation}
\label{ricci}
\rho_{\RR^+\mu}=\omega_{\RR^+\mu}+\frac{\sqrt{-1}}{2\pi}\partial\bar{\partial}(f_{\RR^+\mu}+\operatorname{log}\Vert
\xi'\wedge\eta'\Vert^2_{\RR^+\mu}),
\end{equation}
where $f_{\RR^+\mu}$ and $\Vert \xi'\wedge\eta'\Vert_{\RR^+\mu}$
are the $K_\mu$-projections of $f$ and the point-wise norm of the
multi vector $\xi'\wedge\eta'$. Consequently, $M_{\RR^+\mu}$ is
also Fano.
\end{te}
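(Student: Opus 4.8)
The plan is to read $\rho_{\RR^+\mu}$ off the anticanonical bundle of $M_{\RR^+\mu}$ and to compare it, through the isometric immersion $i_{\RR^+\mu}$ and the Riemannian submersion $\pi_{\RR^+\mu}$ produced in Theorem \ref{reducere--doi}, with the corresponding object on $M$. Recall that on a K\"ahler manifold the Ricci form is the curvature of the anticanonical line bundle with its induced metric, so that locally $\rho=-\frac{\sqrt{-1}}{2\pi}\partial\bar\partial\log\det(\operatorname{g})$ in holomorphic coordinates. Since $\omega$ represents $c_1(M)$, the hypothesis already hands us, via the $i\partial\bar\partial$-lemma, a global function $f$ with $\rho=\omega+\frac{\sqrt{-1}}{2\pi}\partial\bar\partial f$; thus the entire content of \eqref{ricci} is to determine how the volume-density part of this identity transforms under ray reduction, the answer being encoded in $\log\Vert\xi'\wedge\eta'\Vert^2$.

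First I would assemble the pointwise linear-algebraic data already extracted in the proof of Theorem \ref{reducere--doi}. At a point $x\in J^{-1}(\RR^+\mu)$ the horizontal space $\mathcal{H}_x$ is $\mathcal{C}$-invariant and $T_x\pi_{\RR^+\mu}$ restricts to a biholomorphic isometry of $\mathcal{H}_x$ onto $T_{[x]}M_{\RR^+\mu}$, while its $\operatorname{g}$-orthogonal complement is the vertical fibre spanned by $\{\xi_{iM}\}$ together with the normal bundle, the block $\mathfrak{m}_M(x)$ of \eqref{eq4} accounting for the part coming from $\mathfrak{m}$. Taking $(1,0)$-parts turns the real infinitesimal isometries attached to the chosen bases of $\mathfrak{k}_\mu$ and $\mathfrak{m}$ into the holomorphic fields $\xi'_{iM}$ and $\eta'_{jM}$, which generate the complexification of the distribution orthogonal to $\mathcal{H}$. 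Local freeness of the $K_\mu$-action (Lemma \ref{lema1}) together with the isomorphism \eqref{eq1} keeps these fields from degenerating along $J^{-1}(\RR^+\mu)$, so that $\Vert\xi'\wedge\eta'\Vert^2$ is positive and $\log\Vert\xi'\wedge\eta'\Vert^2$ is a smooth function there.

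The core of the argument is then a Futaki-type volume identity in the spirit of \cite{futaki--unu} and \cite{futaki--doi}. Completing a local horizontal holomorphic frame of $J^{-1}(\RR^+\mu)$ to a holomorphic frame of $T^{1,0}M$ by means of the fields $\xi'_{iM},\eta'_{jM}$ expresses $\det(\operatorname{g}_{\RR^+\mu})$ as $\det(\operatorname{g})$ divided by the Hermitian Gram determinant of the completing fields, which is exactly $\Vert\xi'\wedge\eta'\Vert^2$; because these fields are holomorphic, $\partial\bar\partial\log\Vert\xi'\wedge\eta'\Vert^2$ is the genuine curvature correction and no spurious terms appear. This yields, along $J^{-1}(\RR^+\mu)$,
\[
\pi_{\RR^+\mu}^*\rho_{\RR^+\mu}=i_{\RR^+\mu}^*\rho+\frac{\sqrt{-1}}{2\pi}\partial\bar\partial\log\Vert\xi'\wedge\eta'\Vert^2 .
\]
Since $K_\mu$ is compact I may fix an $\operatorname{Ad}_{K_\mu}$-invariant inner product, under which $\operatorname{Ad}_{K_\mu}$ preserves $\mathfrak{k}_\mu$ and acts orthogonally, so that $f$, $\Vert\xi'\wedge\eta'\Vert^2$ and every $(1,1)$-form above are $K_\mu$-basic and descend to $f_{\RR^+\mu}$, $\Vert\xi'\wedge\eta'\Vert^2_{\RR^+\mu}$ and to forms on the quotient obeying $\pi_{\RR^+\mu}^*(\cdot)=i_{\RR^+\mu}^*(\cdot)$ as in Theorem \ref{reducere--unu}. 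Substituting $\rho=\omega+\frac{\sqrt{-1}}{2\pi}\partial\bar\partial f$ and $\pi_{\RR^+\mu}^*\omega_{\RR^+\mu}=i_{\RR^+\mu}^*\omega$ then gives \eqref{ricci}.

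The Fano conclusion follows at once: \eqref{ricci} presents $\rho_{\RR^+\mu}-\omega_{\RR^+\mu}$ as $\frac{\sqrt{-1}}{2\pi}\partial\bar\partial$ of a globally defined function, so $[\rho_{\RR^+\mu}]=[\omega_{\RR^+\mu}]$ in $H^2(M_{\RR^+\mu})$; as $\omega_{\RR^+\mu}$ is a K\"ahler (hence positive) class and $M_{\RR^+\mu}$ is compact, the first Chern class is positive and $M_{\RR^+\mu}$ is Fano. I expect the real obstacle to be the volume identity of the third paragraph, precisely because the reduction is carried out at a nonzero momentum: the complex structure does not preserve the real splitting into vertical, normal and $\mathfrak{m}_M$ pieces, so the completing fields $\xi'_{iM},\eta'_{jM}$ are entangled with $\mathcal{H}^{1,0}$ rather than being transverse to it in the naive complex-codimension sense. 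Showing that $\Vert\xi'\wedge\eta'\Vert^2$ is nonetheless the correct nonvanishing density, and that $\partial\bar\partial\log$ of it is exactly the defect between the anticanonical curvatures of $M$ and $M_{\RR^+\mu}$, is what forces one into Futaki's holomorphic-vector-field machinery rather than a bare Jacobian count, and it is where essentially all of the work resides.
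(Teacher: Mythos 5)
Your overall strategy is the paper's own (and Futaki's): compare the curvature of $\det T^{1,0}M$ restricted along $J^{-1}(\RR^+\mu)$ with that of $\det T^{1,0}M_{\RR^+\mu}$, the defect being governed by the fields $\xi'_{iM},\eta'_{jM}$. But your central identity
\[
\pi^*_{\RR^+\mu}\rho_{\RR^+\mu}=i^*_{\RR^+\mu}\rho+\tfrac{\sqrt{-1}}{2\pi}\partial\bar{\partial}\log\Vert\xi'\wedge\eta'\Vert^2
\]
is not correct as stated, and the gap it hides is exactly the step you then wave through (``$f$, $\Vert\xi'\wedge\eta'\Vert^2$ and every $(1,1)$-form above are $K_\mu$-basic and descend \dots obeying $\pi^*_{\RR^+\mu}(\cdot)=i^*_{\RR^+\mu}(\cdot)$''). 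The level set $J^{-1}(\RR^+\mu)$ is \emph{not} a complex submanifold: $\mathcal{C}$ maps the vertical distribution $\mathcal{V}$ into the normal bundle. Consequently $i^*_{\RR^+\mu}(\partial\bar{\partial}f)$ is not a basic form --- contracting it with a vertical vector $\xi_M$ produces terms involving the normal derivative $\mathcal{C}\xi_M(f)$, which has no reason to vanish --- so $i^*_{\RR^+\mu}\partial\bar{\partial}f\neq\pi^*_{\RR^+\mu}\partial\bar{\partial}f_{\RR^+\mu}$ in general, and your substitution of $\rho=\omega+\frac{\sqrt{-1}}{2\pi}\partial\bar{\partial}f$ does not produce \eqref{ricci}. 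The same problem infects the Gram-determinant step: the block-diagonal factorization of the frame's Gram matrix holds only pointwise along $J^{-1}(\RR^+\mu)$, and one cannot apply $\partial\bar{\partial}$ to an identity valid only on a non-complex submanifold; moreover the ``horizontal holomorphic frame'' you complete is only defined along that submanifold, so its holomorphy is not even meaningful there.

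What the paper does to close precisely this gap: it decomposes the connection form $\theta^{\RR^+\mu}$ of $i^*_{\RR^+\mu}\det T^{1,0}M$ into blocks adapted to $\mathcal{H}^{1,0}\oplus\mathcal{V}^{1,0}\oplus\mathcal{M}^{1,0}$, proves $\pi^*_{\RR^+\mu}\theta_{\RR^+\mu}=\theta^h_h$, and computes the remaining curvature blocks separately. The mixed blocks give your term, $d\theta^v_h+d\theta^m_h=\pi^*_{\RR^+\mu}\bigl(\bar{\partial}\partial\log\Vert\xi'\wedge\eta'\Vert^2\bigr)$ (equation \eqref{ecuatie--sapte}); but there is a further nonzero block, computed via Futaki's Lemma $7.3.8$ together with the vanishing of the holomorphy potentials $u_{\xi'_M}=u_{\eta'_M}=0$ along $J^{-1}(\RR^+\mu)$ (this is where the ray-momentum condition actually enters): it yields $\theta_v(\xi_M)=-\xi'_M(f)$, hence $d\theta_v=i^*_{\RR^+\mu}\partial\bar{\partial}f-\pi^*_{\RR^+\mu}\partial\bar{\partial}f_{\RR^+\mu}$ (equation \eqref{ecuatie--cinci}), while $d\theta_m=0$. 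It is this extra block that converts the ambient restriction $i^*_{\RR^+\mu}\partial\bar{\partial}f$ into the reduced term $\pi^*_{\RR^+\mu}\partial\bar{\partial}f_{\RR^+\mu}$ appearing in \eqref{ricci}. Your proposal omits this computation entirely, and without it the asserted formula cannot be reached.
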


\begin{proof}
First note that $\xi'\wedge\eta'$ is $K_\mu$-invariant. For any
$g\in G$ and $x\in M$, $ \xi_M(gx)=g_*(ad_{g^{-1}}\xi)_M(x)$.
Since the kernel group $K_\mu$ is compact,
$\det(ad_{g^{-1}}|_{\mathfrak{k}_\mu})=1$ and
$$
(\xi'_{1M}\wedge...\wedge\xi'_{kM})(gx)=(\det(ad_{g^{-1}}|_{\mathfrak{k}_\mu})g_*(\xi'_{1M}\wedge...\wedge\xi'_{kM})(x)=
g_*(\xi'_{1M}\wedge...\wedge\xi'_{kM})(x).
$$
Even more
$\det(ad_{g^{-1}}|_{\mathfrak{g}})=\det(ad_{g^{-1}}|_{\mathfrak{g}_\mu})\det(ad_{g^{-1}}|_{\mathfrak{m}})$
and since $G_\mu$ and $G$ are compact, it follows that
$\det(ad_{g^{-1}}|_{\mathfrak{m}})=1$. By an argument similar to
the one above we obtain that the multi vector $\xi'\wedge\eta'$ is
$K_\mu$-invariant.

 The action of $K_\mu$ being
by isometries it is clear that the point wise norm of
$\xi'\wedge\eta'$ is also $K_\mu$-invariant. Recall from Theorem
\ref{reducere--doi} that we have the following orthogonal
decomposition:
\begin{equation}
T_x
M=\mathcal{V}_x\oplus\mathcal{H}_x\oplus\mathfrak{m}_{M}(x)\oplus\mathcal{C}(\mathcal{V}_x).
\end{equation}
$\mathcal{V}_x$ is the vertical space at $x$ of the Riemannian
submersion $\pi_{\RR^+\mu}:J^{-1}(\RR^+\mu)\rightarrow
M_{\RR^+\mu}$ and it is generated by $\{\xi_{iM}(x)\}_{i=1,k}$.
The horizontal space at $x$ is $\mathcal{H}_x$ and
$\mathfrak{m}_{M}(x)$ is invariant with respect to the complex
structure $\mathcal{C}$. Let $\mathcal{V}$ and $\mathcal{M}$ be
the distributions defined by
$\{\mathcal{V}_x\oplus\mathcal{C}(\mathcal{V}_x)\}_{x\in
J^{-1}(\RR^+\mu)}$ and $\{\mathfrak{m}_{M}(x)\}_{x\in
J^{-1}(\RR^+\mu)}$. Consider the following decompositions

\begin{equation*}
\begin{array}{ll}
\mathcal{V}\otimes\CC & = \mathcal{V}^{1,0}\oplus\mathcal{V}^{0,1}  \\
\mathcal{H}\otimes\CC & = \mathcal{H}^{1,0}\oplus\mathcal{H}^{0,1}  \\
\mathcal{M}\otimes\CC& = \mathcal{M}^{1,0}\oplus\mathcal{M}^{0,1}.
\end{array}
\end{equation*}
Then we have that $i^*_{\RR^+\mu}T^{1,0}M =
\mathcal{H}^{1,0}\oplus\mathcal{V}^{1,0}\oplus\mathcal{M}^{1,0}$.
Denote by $\nabla^h$, $\nabla^v$, $\nabla^m$, and
$\nabla^{\RR^+\mu}$ the connections induced by the Levi-Civita
connection of $M$ on $\mathcal{H}^{1,0}$, $\mathcal{V}^{1,0}$,
$\mathcal{M}^{1,0}$, and $i^*_{\RR^+\mu}T^{1,0}M$ (or their
determinant bundles). Let $\theta^h$, $\theta^v$, $\theta^m$, and
$\theta^{\RR^+\mu}$ be the connection forms of the above defined
connections with respect to the local, orthogonal and $K_\mu$-
invariant frames $Y_1\wedge...\wedge Y_s$,
$\xi'_{1M}\wedge...\wedge\xi'_{kM}$,
$\eta'_{1M}\wedge...\wedge\eta'_{mM}$, $Y_1\wedge...\wedge
Y_s\wedge\xi'_{1M}\wedge...\wedge\xi'_{kM}\wedge\eta'_{1M}\wedge...\wedge\eta'_{mM}$,
respectively. Then,
$\theta^{\RR^+\mu}=\theta^h+\theta^v+\theta^m$. Extend the
connection forms by
\begin{align*}
\theta^h_h(Y)&=\theta^h(Y)   & \theta^h_h(\xi_M)&=0&   &
\theta^h_h(\eta_M)&=0& &\theta^h_v(Y)&=0 &
\theta^h_v(\xi_M)&=\theta^h(\xi_M)\\
\theta^v_h(Y)&=\theta^v(Y)             & \theta^v_h(\xi_M)&=0&   &
\theta^v_h(\eta_M)&=0& &\theta^v_v(Y)&=0 &
\theta^v_v(\xi_M)&=\theta^v(\xi_M)\\
\theta^m_h(Y)&=\theta^m(Y)             & \theta^m_h(\xi_M)&=0&
&\theta^m_h(\eta_M)&=0& &\theta^m_v(Y)&=0 &
\theta^m_v(\xi_M)&=\theta^m(\xi_M)
\end{align*}

\begin{align*}
\theta^h_v(\eta_M)&=0   & \theta^h_m(Y)&=0&   &
\theta^h_m(\xi_M)&=0&  &\theta^h_m(\eta_M)&=\theta^h(\eta_M) & & &
&
 & \\
\theta^v_v(\eta_M)&=0             & \theta^v_m(Y)&=0&   &
\theta^v_m(\xi_M)&=0&   &\theta^v_m(\eta_M)&=\theta^v(\eta_M)& & &
&
 & \\
\theta^m_v(\eta_M)&=0             & \theta^m_m(Y)&=0&  &
\theta^m_m(\xi_M)&=0&  &\theta^m_m(\eta_M)&=\theta^m(\eta_M)& & &
& &,
\end{align*}
for any $Y\in\mathcal{H}$, $\xi_M\in\mathcal{V}$, and
$\eta_M\in\mathcal{M}$. Then $\theta=\theta^h_h+B$, where
$B=\theta^h_v+\theta^h_m+\theta^v_h+\theta^v_v+\theta^v_m+
\theta^m_h+\theta^m_v+\theta^m_m$. Finally, let
$\theta_{\RR^+\mu}$ be the connection form of the fiber bundle
$\det T^{1,0}M_{\RR^+\mu}$ with respect to the local orthogonal
frame $\pi_{\RR^+\mu*}Y_1\wedge...\wedge\pi_{\RR^+\mu*}Y_s$.

We want to prove that
$(\pi_{\RR^+\mu})^*\theta_{\RR^+\mu}=\theta^h_h$. First,note that
the Levi-Civita connection of $M_{\RR^+\mu}$ is given by
$$
\nabla^{\RR^+\mu}_{\hat{X}_1}
\hat{^X}_2=\pi_{\RR^+\mu*}(hor(\nabla_{X_{1h}} X_{2h})),
$$
for any $\hat{X}_1$, $\hat{X}_2$ vector fields on the quotient.
Here $hor$ denotes the horizontal projection and $X_{1h}$,
$X_{2h}$ are the unique sections of the horizontal distribution
which project onto $\hat{X}_1$ and $\hat{X}_2$. Then, we obtain
\begin{equation*}
\begin{split}
(\pi^*_{\RR^+\mu}\theta_{\RR^+\mu})(X_h)=\nabla^{\RR^+\mu}_{\pi_{\RR^+\mu_*}
X_h}(\pi_{\RR^+\mu*}(Y_1)\wedge...\wedge\pi_{\RR^+\mu*}(Y_k))=\\
{\sum}_{i=1}^{k}
\pi_{\RR^+\mu*}(Y_1)\wedge...\wedge\pi_{\RR^+\mu*}(hor\nabla_{X_h}
Y_i)\wedge...\wedge\pi_{\RR^+\mu*}(Y_k)\\=\pi_{\RR^+\mu}*(\nabla_{X_h}
(Y_1\wedge...\wedge
Y_k))=\pi_{\RR^+\mu}*(\theta^h_h(X_h)Y_1\wedge...\wedge Y_k).
\end{split}
\end{equation*}
Since the frame is $K_\mu$-invariant it follows that
$\theta^h_h(X_h)$ is a $K_\mu$-invariant function on
$J^{-1}(\RR^+\mu)$, for any horizontal vector field $X_h$.
Therefore, $(\pi_{\RR^+\mu})^*\theta_{\RR^+\mu}=\theta^h_h$ and
\begin{equation}
\label{aoleu--doamne--ajutor}
\pi_{\RR^+\mu}^*\rho_{\omega_{\RR^+\mu}}=\frac{\sqrt{-1}}{2\pi}d\pi_{\RR^+\mu}^*\theta_{\RR^+\mu}=\frac{\sqrt{-1}}{2\pi}d\theta^h_h
=\frac{\sqrt{-1}}{2\pi}(d\theta-B)
\\
=i^*_{\RR^+\mu}\rho_{\omega}-\frac{\sqrt{-1}}{2\pi}B,
\end{equation}
where
$B:=d\theta^h_v+d\theta^h_m+d\theta^v_h+d\theta^v_v+d\theta^v_m+d\theta^m_h+d\theta^m_v+d\theta^m_m$.

Observe that
\begin{equation}
\label{puf--puf} d\theta^v_h=d\pi_{\RR^+\mu}^*(\partial
\log\Vert\xi'\Vert^2_{\RR^+\mu})=\pi_{\RR^+\mu}^*(\bar{\partial}\partial\log
\Vert\xi'\Vert^2_{\RR^+\mu}).
\end{equation}
Indeed, fix $Y$ a section of $\mathcal{H}^{1,0}$. Working in
holomorphic coordinates it is very easy to see that
$\nabla^v_{\bar{Y}}\xi'_M=0$. On the other hand

\begin{equation}
\begin{split}
\nabla^v_Y\xi'={\sum}_{i=1}^{k}\xi'_{1M}\wedge...\wedge\frac{\operatorname{g}(\nabla_Y\xi'_{iM},\bar{\xi}'_{iM}
)}{\Vert\xi'_{iM}\Vert^2}\xi'_{iM}\wedge...\wedge\xi'_{kM}=\\
{\sum}_{i=1}^{k}Y(log(\Vert\xi'_{iM}\Vert^2))\xi'=Y(log(\Vert\xi'\Vert^2))\xi'.
\end{split}
\end{equation}
Hence formula \eqref{puf--puf} follows. In a similar way we can
see that $d\theta^m_h=\pi^*_{\RR^+\mu}(\bar{\partial}\partial
 (log\Vert\eta'\Vert^2))$. Therefore,
\begin{equation}
\label{ecuatie--sapte}
d\theta^v_h+d\theta^m_h=\pi^*_{\RR^+\mu}(\bar{\partial}\partial\,
 log\Vert\xi'\wedge\eta'\Vert^2).
\end{equation}

Applying Lemma $7.3.8$ in \cite{futaki--unu}, we know that for any
$\gamma$, section of $\det T^{1,0}M$ and any $\xi$ in
$\mathfrak{k}_\mu$, $L_{\xi_M}\gamma=\nabla_{\xi_M}\gamma-(2\pi
\sqrt{-1}\Delta J^{\xi})\gamma$. In particular, for
$\gamma:=Y_1\wedge...\wedge
Y_s\wedge\xi'_{1M}\wedge...\wedge\xi'_{kM}\wedge\eta'_{1M}\wedge...\wedge\eta'_{mM}$,
along $J^{-1}(\RR^+\mu)$ we get
$\nabla_{\xi_M}\gamma=L_{\xi_M}\gamma+(2\pi \sqrt{-1}\Delta
J^{\xi})\gamma=-(\xi'_M f)\gamma=$ and
$\nabla_{\eta_M}\gamma=-(\eta'_M f)\gamma$. Recall that from the
definition of $J$ we have that $u_{\xi'_M}=u_{\eta'_M}=0$, for all
$\xi\in\mathfrak{k}_\mu$ and $\eta\in\mathfrak{m}$. Let
$\theta_v:=\theta^h_v+\theta^v_v+\theta^m_v$ and
$\theta_m:=\theta^h_m+\theta^v_m+\theta^m_m$. From the above
computations we have that $\theta_v(\xi_M)=-\xi'_M(f)$ and
$\theta_m(\eta_M)=-\eta'_M(f)=0$, for all $\xi\in\mathfrak{k}_\mu$
and all $\eta\in\mathfrak{m}$. Notice that for the last equality
we have used the fact that $\mathfrak{m}_M$ is invariant with
respect to the complex structure $\mathcal{C}$.

The definitions of $\theta_v$ and $\theta_m$ imply that
\begin{equation}
\label{ecuatie--cinci} \theta_v=-i^*_{\RR^+_mu}\partial
f+\pi^*_{\RR^+_mu}\partial f_{\RR^+_mu}\quad
d\theta_v=i^*_{\RR^+_mu}\partial\bar{\partial}f-\pi^*_{\RR^+_mu}\partial
\bar{\partial}f_{\RR^+_mu},
\end{equation}
\begin{equation}
\label{ecuatie--sase}
d\theta_m=0.
\end{equation}
>From (\ref{aoleu--doamne--ajutor}), (\ref{ecuatie--sapte}),
(\ref{ecuatie--sase}), and (\ref{ecuatie--cinci}), the conclusion
of the theorem follows.
\end{proof}

Theorem \ref{reducere--doi}, Proposition \ref{boothbywang}, and
Theorem \ref{curbura--Ricci} entail the following corollary.
\begin{co}
\label{einstein}
In the hypothesis of Theorem \ref{reducere--doi}, suppose $M$ is
also K\"ahler-Einstein of positive Ricci curvature. Then ray
reduced space $M_{\RR^+\mu}$ is K\"ahler-Einstein if and only if
$\Vert \xi'\wedge\eta'\Vert_{\RR^+\mu}$ is constant on
$J^{-1}(\RR^+\mu)$.
\end{co}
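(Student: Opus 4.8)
The plan is to extract everything from the Ricci formula \eqref{ricci} of Theorem \ref{curbura--Ricci}, after first disposing of the $f$-term using the Einstein hypothesis on $M$. First I would observe that since $M$ is K\"ahler-Einstein of positive Ricci curvature and $\omega$ represents $c_1(M)$, the Einstein equation $\rho=\lambda\omega$ together with the relation $[\rho]=[\omega]$ in $H^2_{DR}(M)$ (both represent the first Chern class) forces $\lambda=1$; thus $\rho=\omega$, and the defining identity $\rho-\omega=\frac{\sqrt{-1}}{2\pi}\partial\bar{\partial}f$ gives $\partial\bar{\partial}f=0$. As $M$ is a compact K\"ahler manifold, $f$ is therefore constant, and hence so is its $K_\mu$-projection $f_{\RR^+\mu}$.

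Substituting $\partial\bar{\partial}f_{\RR^+\mu}=0$ into \eqref{ricci} leaves
$$\rho_{\RR^+\mu}=\omega_{\RR^+\mu}+\frac{\sqrt{-1}}{2\pi}\partial\bar{\partial}\log\Vert\xi'\wedge\eta'\Vert^2_{\RR^+\mu}.$$
By Theorem \ref{curbura--Ricci} the reduced space is Fano and $\omega_{\RR^+\mu}$ represents its first Chern class, so repeating the cohomological argument above (the two forms differ by a globally exact $\partial\bar{\partial}$-term, and $[\omega_{\RR^+\mu}]\neq 0$) shows that $(M_{\RR^+\mu},\operatorname{g}_{\RR^+\mu})$ is K\"ahler-Einstein if and only if $\rho_{\RR^+\mu}=\omega_{\RR^+\mu}$, that is, if and only if the function $u:=\log\Vert\xi'\wedge\eta'\Vert^2_{\RR^+\mu}$ satisfies $\partial\bar{\partial}u=0$.

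It remains to see that $\partial\bar{\partial}u=0$ is equivalent to $\Vert\xi'\wedge\eta'\Vert_{\RR^+\mu}$ being constant. The ``if'' direction is immediate, since a constant has vanishing $\partial\bar{\partial}$. For the converse I would use that, by the linear independence of $\{\xi_{iM},\eta_{jM}\}$ on $J^{-1}(\RR^+\mu)$ established in the proof of Theorem \ref{reducere--doi}, the multivector $\xi'\wedge\eta'$ is nowhere vanishing, so $u$ is a smooth, $K_\mu$-invariant function descending to a smooth function on $M_{\RR^+\mu}$. Tracing $\sqrt{-1}\,\partial\bar{\partial}u=0$ against $\omega_{\RR^+\mu}$ shows $u$ is harmonic; since $M_{\RR^+\mu}$ is Fano, hence a compact K\"ahler orbifold, the maximum principle forces $u$ to be constant, and therefore $\Vert\xi'\wedge\eta'\Vert_{\RR^+\mu}$ is constant.

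The one genuinely delicate point is this last step: one must know that a pluriharmonic (equivalently harmonic) function on the reduced space is constant. This is exactly where the compactness of $M_{\RR^+\mu}$ — supplied by the Fano conclusion of Theorem \ref{curbura--Ricci} — together with the orbifold version of the maximum principle is essential; without compactness the two conditions in the statement would not coincide. The rest of the argument is purely formal bookkeeping with \eqref{ricci}.
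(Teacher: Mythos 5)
Your proposal is, in substance, far more of a proof than the paper gives: the paper's entire argument for Corollary \ref{einstein} is the sentence ``It is just a matter of definitions.'' What is genuinely definitional is only one direction: once one knows that the Einstein hypothesis on $M$ forces $f$, hence $f_{\RR^+\mu}$, to be constant (your cohomological argument $[\rho]=[\omega]=c_1(M)$ plus compactness of $M$ does this correctly), formula \eqref{ricci} immediately gives $\rho_{\RR^+\mu}=\omega_{\RR^+\mu}$ whenever $\Vert\xi'\wedge\eta'\Vert_{\RR^+\mu}$ is constant. Your treatment of the converse --- normalizing the Einstein constant of the quotient to $1$ by the same cohomological device, and then upgrading $\partial\bar{\partial}\log\Vert\xi'\wedge\eta'\Vert^2_{\RR^+\mu}=0$ to constancy via harmonicity and the maximum principle --- is exactly the content that the paper suppresses, and you are right to single it out as the delicate part.

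The one point to scrutinize is the compactness of $M_{\RR^+\mu}$, on which both of your nontrivial steps rest (non-exactness of $[\omega_{\RR^+\mu}]$, and pluriharmonic $\Rightarrow$ constant). You obtain it by reading the conclusion of Theorem \ref{curbura--Ricci}, that $M_{\RR^+\mu}$ is ``Fano,'' as including compactness; that is the standard meaning of the word, so within the paper's own framework your proof is complete. But nothing in the paper actually establishes compactness of a ray quotient: $J^{-1}(\RR^+\mu)$ is the preimage of a \emph{non-closed} ray, its closure in the compact manifold $M$ can meet $J^{-1}(0)$, and the paper's own ray-reduced examples at the end of Section \ref{sectiunea--sapte} (e.g. $S^5(\frac{1}{\sqrt 2})\setminus S^1(\frac{1}{\sqrt 2})$ in the Sasakian picture) are non-compact. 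So the dependence of the ``Einstein $\Rightarrow$ constant norm'' implication on compactness is a genuine gap --- but it is a gap in the paper's one-line proof at least as much as in your write-up, since the paper later invokes precisely this hard direction (to rule out Einstein-ness of a weighted quotient) on a visibly non-compact space, whereas you at least identify the hypothesis under which the argument is valid.
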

\begin{proof}
It is just a matter of definitions.
\end{proof}

Theorems $2.5$ in \cite{boyer-galicki1} and \ref{curbura--Ricci}
imply

\begin{co}
In the hypothesis of Theorem \ref{curbura--Ricci}, if $M$ has Ricci
curvature strictly bigger then $-2$, then so does $M_{\RR^+\mu}$.
\end{co}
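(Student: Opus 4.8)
The plan is to read off from the proof of Theorem~\ref{curbura--Ricci} the \emph{pointwise} curvature identity that precedes the passage to cohomology, and to compare it with the hypothesis $\operatorname{Ric}_M>-2$ after restricting to the horizontal distribution of $\pi_{\RR^+\mu}$. Concretely, the identity \eqref{aoleu--doamne--ajutor} appearing in that proof reads
\[
\pi_{\RR^+\mu}^*\rho_{\RR^+\mu}=i_{\RR^+\mu}^*\rho_\omega-\frac{\sqrt{-1}}{2\pi}B,
\]
where $B$ gathers the connection-form corrections $d\theta^v_h,d\theta^m_h,\dots$ coming from the splitting $T_xM=\mathcal{V}_x\oplus\mathcal{H}_x\oplus\mathfrak{m}_M(x)\oplus\mathcal{C}(\mathcal{V}_x)$. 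First I would restate the hypothesis in terms of forms: since $(M,\operatorname{g},\omega)$ is K\"ahler and $\omega$ represents $c_1(M)$, the bound $\operatorname{Ric}_M>-2$ is equivalent to the strict inequality of real $(1,1)$-forms $\rho_\omega>-2\omega$, because for any $X$ one has $\rho_\omega(X,\mathcal{C}X)=\operatorname{Ric}_M(X,X)$ and $\omega(X,\mathcal{C}X)=\Vert X\Vert^2$.

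Next I would localize on the horizontal distribution. By Theorem~\ref{reducere--doi}, at each $x\in J^{-1}(\RR^+\mu)$ the map $T_x\pi_{\RR^+\mu}$ is a complex-linear isometry of $(\mathcal{H}_x,\mathcal{C})$ onto $(T_{[x]}M_{\RR^+\mu},\mathcal{C}_{\RR^+\mu})$ and $i_{\RR^+\mu}^*\omega$ agrees with $\pi_{\RR^+\mu}^*\omega_{\RR^+\mu}$ there. Evaluating the displayed identity on a horizontal $X$ and on $\mathcal{C}X$ then turns it into the scalar relation $\operatorname{Ric}_{M_{\RR^+\mu}}(T_x\pi_{\RR^+\mu}X,T_x\pi_{\RR^+\mu}X)=\operatorname{Ric}_M(X,X)-\frac{\sqrt{-1}}{2\pi}B(X,\mathcal{C}X)$, with the horizontal norm preserved on the left. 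Hence the whole statement reduces to proving that the correction is non-negative on horizontal vectors, namely $-\frac{\sqrt{-1}}{2\pi}B(X,\mathcal{C}X)\ge 0$ for every $X\in\mathcal{H}$.

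This last point is the step I expect to be the main obstacle, and it is exactly where Theorem~$2.5$ of \cite{boyer-galicki1} enters: on $\mathcal{H}$ the terms of $B$ should reassemble into the squared norm of the O'Neill integrability tensor of the K\"ahler submersion $\pi_{\RR^+\mu}$, whose sign makes the base Ricci curvature dominate the restricted ambient one. The delicate verification is that the two contributions $\pi_{\RR^+\mu}^*\bar\partial\partial\log\Vert\xi'\wedge\eta'\Vert^2$ (from $d\theta^v_h+d\theta^m_h$, computed in \eqref{ecuatie--sapte}), which are not signed on their own, recombine with the remaining cross terms of $B$ into this manifestly non-negative integrability expression rather than spoiling it. Granting this, on $\mathcal{H}$ one gets $\pi_{\RR^+\mu}^*\rho_{\RR^+\mu}\ge i_{\RR^+\mu}^*\rho_\omega>-2\,\pi_{\RR^+\mu}^*\omega_{\RR^+\mu}$; since $\pi_{\RR^+\mu}^*$ is injective and $T_x\pi_{\RR^+\mu}$ carries $\mathcal{H}_x$ isomorphically onto the full tangent space of the quotient, this yields $\rho_{\RR^+\mu}>-2\,\omega_{\RR^+\mu}$, i.e. $\operatorname{Ric}_{M_{\RR^+\mu}}>-2$. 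Strictness survives because a strict inequality is only reinforced by adding a non-negative term.
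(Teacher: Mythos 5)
Your proposal diverges from the paper at the decisive step, and that step is a genuine gap. The paper does not prove this corollary by a pointwise comparison of Ricci tensors at all: it obtains it directly by combining Theorem $2.5$ of \cite{boyer-galicki1} with Theorem \ref{curbura--Ricci}. Theorem $2.5$ of \cite{boyer-galicki1} translates the condition ``Ricci curvature $>-2$'' into a positivity condition of Fano type (positivity of the relevant first Chern class / transverse Ricci), and Theorem \ref{curbura--Ricci} says precisely that this Fano positivity survives ray reduction; nowhere is an inequality between $\pi_{\RR^+\mu}^*\rho_{\RR^+\mu}$ and $i_{\RR^+\mu}^*\rho_\omega$ at individual points asserted. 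Your route, if it worked, would establish a much stronger pointwise domination statement, and this is exactly where it fails.

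The gap is the claim $-\frac{\sqrt{-1}}{2\pi}B(X,\mathcal{C}X)\ge 0$ for horizontal $X$, which you defer (``Granting this\dots'') and which cannot be established in general. By \eqref{ecuatie--sapte}, \eqref{ecuatie--cinci} and \eqref{ecuatie--sase}, the horizontal part of $B$ is built from complex Hessians of globally defined functions --- it is what produces the term $\frac{\sqrt{-1}}{2\pi}\partial\bar{\partial}\bigl(f_{\RR^+\mu}+\log\Vert\xi'\wedge\eta'\Vert^2_{\RR^+\mu}\bigr)$ in formula \eqref{ricci} --- and such $i\partial\bar{\partial}$-exact terms carry no sign; they are not squares of an O'Neill tensor. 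Indeed, take $M$ compact K\"ahler--Einstein with $\rho_\omega=\omega$, so that $f$ is constant: your inequality would give $\rho_{\RR^+\mu}-\omega_{\RR^+\mu}=\frac{\sqrt{-1}}{2\pi}\partial\bar{\partial}\log\Vert\xi'\wedge\eta'\Vert^2_{\RR^+\mu}\ge 0$ everywhere, and on a compact quotient this forces $\log\Vert\xi'\wedge\eta'\Vert^2_{\RR^+\mu}$ to be plurisubharmonic, hence constant, hence every such quotient would automatically be Einstein. That is incompatible with Corollary \ref{einstein}, whose whole content is that quotients of K\"ahler--Einstein manifolds are Einstein \emph{only} when this norm is constant, and with the examples of Section \ref{sectiunea--sapte}, where the norm fails to be constant and the quotient fails to be Einstein. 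The Riemannian-submersion heuristic also fails structurally: the fibres of $\pi_{\RR^+\mu}$ (the $K_\mu$-orbits) need not be totally geodesic or minimal, and $J^{-1}(\RR^+\mu)$ is a proper submanifold of $M$, so besides the non-negative O'Neill $A$-term the Ricci comparison involves the fibres' second fundamental form and the Gauss-equation terms of the embedding, none of which has a definite sign.
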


\begin{exs}
We will now show that all the reduced spaces obtained in some examples
of \cite{dragulete--ornea} are in fact Sasakian-Einstein manifolds.
In Example $3.2$ of this article we let the torus $T^2$ act on the sphere
$S^7$ as follows:
$$
((e^{\sqrt{-1}t_0}, e^{\sqrt{-1}t_1}), z)\mapsto (e^{-\sqrt{-1}t_0}z_0, e^{\sqrt{-1}t_0}z_1, e^{\sqrt{-1}t_1}z_2, e^{\sqrt{-1}t_1}z_3).
$$
Recall that the infinitesimel generator is given by:
\begin{equation*}
\begin{split}
(r_1,r_2)_{S^7}(z)&=r_{1}(y_{0}\partial_{x_{0}}-x_{0}\partial_{y_{0}}) +
r_{1}(-y_{1}\partial_{x_{1}}+x_{1}\partial_{y_{1}})\\
&+r_{2}(-y_{2}\partial_{x_{2}}+x_{2}\partial_{y_{2}})+
r_{2}(-y_{3}\partial_{x_{3}}+x_{3}\partial_{y_{3}}),
\end{split}
\end{equation*}
for any $(r_1,r_2)$ in the Lie algebra of $T^2$
and the momentum map by
$J(z)=\langle(\vert z_1\vert^2-\vert z_0\vert^2,\vert z_2\vert^2+\vert
 z_3\vert^2),\cdot\rangle,$
for any $z\in S^7$. For $\mu:=\langle v,\cdot\rangle$, $v=(,1,1)$ we have that
$M_{\RR^+\mu}$ can be diffeomorphicaly identified with 
$S^5(\frac{1}{\sqrt 2})\setminus \mathrm{pr}\left\{ z\in  S^7\mid
\vert z_0\vert^2 =\frac 12\right\}\simeq
S^5(\frac1{\sqrt2})\setminus S^1(\frac
 1{\sqrt2})$,
where $\mathrm{pr}:\CC^4\rightarrow\CC^3$,
$\mathrm{pr}(z_0,\ldots, z_3)=(z_0, z_2,z_3)$. Since the group is commutative
the algebra $\mathfrak{m}$ defined in \eqref{descompunere--m} can be identified with $\{0\}$
and the multivector field of Corollary \ref{einstein} turns out to be a simple vector field $\xi'=(-r,r)'_{S^7}=(-r,r)_{S^7}-\sqrt{-1}\bar{(-r,r)_{S^7}}$, with $r$ any non zero real number. A simple calculation shows that $\Vert(-r,r)'_{S^7}\Vert(z)=\vert r \vert \Vert z\Vert=\vert r \vert$ for all $z\in J^{-1}(\RR^+\mu)$. Hence $\Vert(-r,r)'_{S^7}\Vert$ it is constant on $J^{-1}(\RR^+\mu)$ and $M_{\RR^+\mu}$ is a Sasakian-Einstein manifold.

In Example $3.4$ of the same article, a new Sasakian manifold is obtained for $\mu$ defined exactly as above and the initial action on $S^7$ weighted into
$$
((e^{it_0},e^{it_1}),z)\mapsto
(e^{it_{0}\la_0}z_0,e^{it_{1}\la_1}z_1, z_2, z_3),
$$
with $\lambda_0$, $\lambda_1$ positive constants. This time, the norm of $\xi'(z_0,z_1,z_2,z_3)=(-1,1)'_{S^7}(z_0,z_1,z_2,z_3)$ equals 
$\sqrt{2}\vert\lambda_1\vert\Vert z_1\Vert$ which is not constant on $J^{-1}(\RR^+\mu)=S^7\cap (\CC^*\times A)$ with $A$ is the ellipsoid of equation 
$$
\vert z_1\vert^2\left(1+\frac{\lambda_1}{\lambda_0}\right)+\vert z_2\vert^2+\vert z_3\vert^2=1.
$$
Therefore, the reduced Sasakian manifold 
$$M_{\RR^+\mu}=\bigcup_{(z_2,z_3)\in
 \mathrm{pr}(J^{-1}(\RR^+\mu))}S^1(\be^{-\la_0}\al^{\la_1})\times\left\{(z_2,z_3)
\right\}$$
where $\mathrm{pr}:\CC^4\rightarrow\CC^2$,
$\mathrm{pr}(z_0,\ldots, z_3)=(z_2,z_3)$, $\be=\sqrt{\frac{\la_0(1-\vert
z_2\vert^2-\vert z_3\vert^2)}{\la_0+\la_1}}$, and
$\al=\sqrt{\frac{\la_1(1-\vert z_2\vert^2-\vert
 z_3\vert^2)}{\la_0+\la_1}}$ is not Einstein.

On the other hand, the new contact structure obtained in \cite{dragulete--ornea--ratiu}, Example $3.5$ is Sasakian-Einstein since
the infinitesimal isometries generated by the kernel algebra of $\mu$ are independent of the configuration points.
\end{exs}

\noindent {\bf Acknowledgment.} For valuable discussions and suggestions I want to thank Liviu Ornea, Juan-Pablo Ortega, and Tudor Ratiu.

\end{document}